\tiny\color{gray},
\renewcommand\expandafter\subsection\expandafter{%
    \expandafter\@fb@secFB\subsection
  }%
\newcommand{\ru}[1]{\rule{0pt}{#1 em}}
\newcommand{\tupsingle}{\ru{3}} 
\newlength\titlebox \setlength\titlebox{2.25in}
\theoremstyle{definition}
\newtheorem{definition}{Definition}
\theoremstyle{plain}
\newtheorem{theorem}{Theorem}
\newtheorem*{theorem*}{Theorem}
\newtheorem{lemma}[theorem]{Lemma}
\newtheorem{proposition}[theorem]{Proposition}
\newtheorem{corollary}{Corollary}[theorem]
\theoremstyle{remark}
\newtheorem{remark}{Remark}
\newtheorem{example}{Example}
\newlength{\RoundedBoxWidth}
\newsavebox{\GrayRoundedBox}
\newenvironment{GrayBox}[1][\dimexpr\textwidth-4.5ex]%
   {\setlength{\RoundedBoxWidth}{\dimexpr#1}
    \begin{lrbox}{\GrayRoundedBox}
       \begin{minipage}{\RoundedBoxWidth}}%
   {   \end{minipage}
    \end{lrbox}
    \begin{center}
    \begin{tikzpicture}%
       \draw node[draw=black,fill=black!10,rounded corners,%
             inner sep=2ex,text width=\RoundedBoxWidth]%
             {\usebox{\GrayRoundedBox}};
    \end{tikzpicture}
    \end{center}}
\newlength{\WhiteRoundedBoxWidth}
\newsavebox{\WhiteRoundedBox}
   {\setlength{\WhiteRoundedBoxWidth}{\dimexpr#1}
    \begin{lrbox}{\WhiteRoundedBox}
       \begin{minipage}{\WhiteRoundedBoxWidth}}%
   {   \end{minipage}
    \end{lrbox}
    \begin{center}
    \begin{tikzpicture}%
       \draw node[draw=black,rounded corners,%
             inner sep=2ex,text width=\WhiteRoundedBoxWidth]%
             {\usebox{\WhiteRoundedBox}};
    \end{tikzpicture}
    \end{center}}
\title{On Random Simplex Picking\\Beyond the Blashke Problem}
\author[ ]{Dominik Beck}
\affil[ ]{\small Faculty of Mathematics and Physics, Charles University, Prague}
\affil[ ]{\textit {\href{mailto:beckd@karlin.mff.cuni.cz}{beckd@karlin.mff.cuni.cz}}}
\date{\vspace{-2.3em}}
\date{December 10, 2024}
\begin{document}
\maketitle
\begin{abstract}
New selected values of odd random simplex volumetric moments (moments of the volume of a random simplex picked from a given body) are derived in an exact form in various bodies in dimensions three, four, five, and six. In three dimensions, the well-known Efron’s formula was used by Buchta \& Reitzner and Zinani to deduce the mean volume of a random tetrahedron in a tetrahedron and a cube. However, for higher moments and/or in higher dimensions, the method fails. As it turned out, the same problem is also solvable using the Blashke-Petkantschin formula in Cartesian parametrisation in the form of the Canonical Section Integral (Base-height splitting). In our presentation, we show how to derive the older results mentioned above using our base-height splitting method and also touch on the essential steps of how the method translates to higher dimensions and for higher moments.
\end{abstract}

{\hfill
\begin{minipage}{0.875\textwidth}
\centerline{\scriptsize \textbf{Acknowledgements}}
{\scriptsize The study was supported by the Charles University, project GA UK No. 71224 and by Charles University Research Centre program No. UNCE/24/SCI/022. We would also like to acknowledge the Dual Trimester Program: ``Synergies between modern probability, geometric analysis and stochastic geometry'' organized by the Hausdorff Research in Bonn and the impact it had on this research}
\end{minipage}
\hfill}

\newpage
\tableofcontents

\newpage
\section{Introduction}
\subsection{Definitions}
Let $K_d$ be a compact and convex body in $\mathbb{R}^d$ with $\dim K_d = d$. One family of such bodies are the $d$-simplex $T_d$, $d$-cube $C_d$ or $d$-orthoplex $O_d = \hull(\pm\vect{e}_1,\ldots,\pm\vect{e}_d)$ (the dual of $C_d$). More generally, we write $\solP_d$ for a polytope of dimension $d$ ($d$-polytope). Specifically, $\solP_2$ stands for a \emph{polygon}, $\solP_3$ a \emph{polyhedron}\index{polyhedron} and $\solP_4$ a \emph{polychoron}\index{polychoron}. Let $\mathbb{X} = (\mathbf{X}_0,\mathbf{X}_1,\ldots\mathbf{X}_n)$ be a sample of $(n+1)$ random points $\mathbf{X}_j$, $j=0,\ldots,n$ with $n\geq d$ selected uniformly and independently from the interior of $K_d$ and let $\solH_n(K_d) = \operatorname{conv}(\mathbb{X}) = \operatorname{conv} (\mathbf{X}_0,\ldots,\mathbf{X}_n)$ (or shortly $\solH_n$) be the convex hull of those points. We define $\Delta_n = \operatorname{vol}_d \solH_n(K_d)$ and its corresponding \emph{metric moments}
\begin{equation}
v_n^{(k)}(K_d) = \frac{\mathbb{E}\,\Delta_n^k}{(\operatorname{vol}_d K_d)^k}
\end{equation}
the normalisation factor in the denominator ensures they stay affinely invariant, that is with respect to affine transformations of $K_d$. When $n=d$, we refer to $v_d^{(k)}(K_d)$ as the \emph{volumetric moments} in $K_d$.

\subsection{Known results}
\subsubsection{Even moments}
\begin{wrapfigure}{r}{0.4\textwidth}
    \centering
    \includegraphics[width=0.35\textwidth]{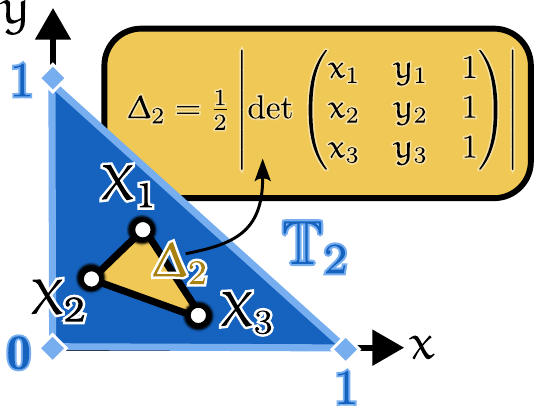}
    \caption{Random triangle area $\Delta_2$ written as a determinant}
    \label{fig:secT3ConI}
\end{wrapfigure}
For even $k$ and any $n \leq d$, volumetric moments $v_n^{(k)}(K_d)$ are trivial to obtain, especially for polytopes. First, note that $\Delta_n$ can be expressed as an absolute value of a determinant of the coordinates of the $n+1$ points forming the vertices of the convex hull $\mathbb{H}_n$ (or as a square root of Gram determinant when $n<d$). Rising this determinant to some (even) power $k$, we obtain some polynomial in coordinates. This is then integrated over the original polytope $P_d$. This is illustrated in Figure \ref{fig:secT3ConI} for the even volumetric moments in the unit triangle $\mathbb{T_2} = \hull([0,0],[1,0],[0,1])$. In general, writing the expectation as an integral, we have for even $k$ and points $\vect{x}_i = (x_i,y_i)^\top, i=1,2,3$,
\begin{equation}
v_2^{(k)}(\mathbb{T}_2) = 2^{k+3} \int_{\mathbb{T}_2^3} \Delta_2^{k} \ddd \vect{x}_0\dd \vect{x}_1 \dd \vect{x}_2.
\end{equation}

For completeness, we enlist in Table \ref{tab:Evenkres} the first three even moments $v_d^{(k)}(P_d)$ for the families of polytopes $T_d$, $C_d$ and $O_d$ upto $d=5$.

\begin{table}[H]
    \centering
\begin{tabular}{|c|c|c|c|}
\hline
 \ru{1.2}$v_d^{(k)}(T_d)$ & $k=2$ & $k=4$ & $k=6$\\
 \hline
 \ru{1.2}$d=1$ & $\frac{1}{6}$ & $\frac{1}{15}$ & $\frac{1}{28}$\\[0.8ex]
 $d=2$ & $\frac{1}{72}$ & $\frac{1}{900}$ & $\frac{403}{2116800}$\\[0.8ex]
 $d=3$ & $\frac{3}{4000}$ & $\frac{871}{123480000}$ & $\frac{2797}{11202105600}$\\[0.8ex]
 $d=4$ & $\frac{1}{33750}$ & $\frac{2083}{96808320000}$ & $\frac{28517}{264649744800000}$ \\[0.8ex]
 $d=5$ & $\frac{5}{5445468}$ & $\frac{24995}{682923373461504}$ & $\frac{11490716929}{618668393733836328960000}$ \\[0.8ex]
 \hline 
\hline
 \ru{1.2}$v_d^{(k)}(C_d)$ & $k=2$ & $k=4$ & $k=6$\\
 \hline
 \ru{1.2}$d=2$ & $\frac{1}{96}$ & $\frac{1}{2400}$ & $\frac{761}{27095040}$\\[0.8ex]
 $d=3$ & $\frac{1}{2592}$ & $\frac{701}{839808000}$ & $\frac{29563}{7466363412480}$\\[0.8ex]
 $d=4$ & $\frac{5}{497664}$ & $\frac{887}{1146617856000}$ & $\frac{6207797}{38533602917272780800}$ \\[0.8ex]
 $d=5$ & $\frac{1}{4976640}$ & $\frac{2899}{7166361600000000}$ & $\frac{3591192719}{1348676
   102104547328000000000}$ \\[0.8ex]
 \hline
 \hline
 \ru{1.2}$v_d^{(k)}(O_d)$ & $k=2$ & $k=4$ & $k=6$\\
 \hline
 \ru{1.2}$d=3$ & $\frac{3}{8000}$ & $\frac{4259}{5268480000}$ & $\frac{7200523}{1835352981504000}$\\[0.8ex]
 $d=4$ & $\frac{1}{108000}$ & $\frac{3959}{5664669696000}$ & $\frac{74002087}{462508951339008000000}$ \\[0.8ex]
 $d=5$ & $\frac{5}{29042496}$ & $\frac{228685}{699313534424580096}$ & $\frac{7261177207}{405955079162673083006928814080000}$ \\[0.8ex]
 \hline
 \end{tabular}
    \caption{Selected values of $v_d^{(k)}(P_d)$ with $P_d=T_d,C_d,O_d$, even $k$ and $d\leq 5$.}
    \label{tab:Evenkres}
\end{table}

\subsubsection{Odd moments}
Extending the work of Crofton, Hostinsk\'{y} \cite[p.~22--26]{hostinsky1925probabilites} considered and solved many problems concerning geometric probability. One of them is the ball tetrahedron picking, which was the first metric moment obtained in $d=3$, it reads
\begin{equation}
    v_3^{(1)}(\mathbb{B}_3) = \frac{9}{715}.
\end{equation}
The result was generalised to higher dimensions by Kingman \cite{kingman1969random}. For the mean volume of a $d$-simplex picked from a $d$-ball, Kingman got
\begin{equation*}
v_d^{(1)}(\solB_d) = \frac{2^{d} \Gamma^2 \left(\frac{(d+1)^2}{2}\right) \Gamma^{d+1} (d+1)}{(d+1)^{d-1}\, \Gamma \left((d+1)^2\right) \Gamma^{2(d+1)}\left(\frac{d+1}{2}\right)}.
\end{equation*}
The result above can be obtained as a special case of even more general formula by Miles \cite[p.~363, Eq. (29)]{miles1971isotropic}, the consequence of which is that the metric moments $v_n^{(k)}(\solB_d)$ are known for any $n$, $k$ and $d$. Particularly,
\begin{equation}
v_d^{(k)}(\solB_d) =\left(\frac{\Gamma \left(\frac{d}{2}+1\right)}{\pi ^{d/2} d!}\right)^{ k} \left(\frac{d}{d+k}\right)^{ d+1} \frac{ \Gamma \left(\frac{(d+1) (d+k)}{2} +1\right)}{\Gamma \left(\frac{d (d+k+1)}{2}+1\right)} \left(\frac{\Gamma\left(\frac{d}{2}\right)}{\Gamma\left(\frac{d+k}{2}\right)}\right)^{\!\!d}\,\, \prod _{l=1}^{d-1} \frac{\Gamma\left(\frac{k+l}{2}\right)}{\Gamma\left(\frac{l}{2}\right)}. 
\end{equation}
Less is known about volumetric moments in polytopes. In two dimensions, one of the classical problems of random geometry is to find the mean convex hull area and its moments, that is to express $v_n^{(k)}(K_2)$ for various $K_2$ and with $n \geq 2$. A lot of results were obtained in this direction. For example, Buchta and Reitzner \cite{buchta1997equiaffine} found a formula expressing $v_n^{(1)}(P_2)$ for any convex polygon $P_2$.

\vspace{1em}
Apart from a ball, not many exact results were known in three dimensions. Here, we are interested in expressing $v_n^{(k)}(K_3)$ with $n \geq 3$, which represents the $k$-th moment of a random volume of a convex hull of $(n+1)$ points. When $n = 3$, the convex hull is almost surely a tetrahedron, so $v_3^{(1)}(K_3)$ represents the \emph{mean tetrahedron volume} and similarly, we have $v_3^{(k)}(K_3)$ for higher moments. The famous difficult problem proposed by Klee \cite{klee1969expected} and popularised by Blaschke concerns finding $v_3^{(1)}(T_3)$, the mean volume of a tetrahedron formed by four uniformly selected random points from the interior of a fixed unit volume tetrahedron. The first attempt was made by Reed. In \cite{reed1974random}, he uses the Crofton reduction technique \cite{ruben1973more} which enables him to express the exact value of $v_3^{(1)}(T_3)=V_{3333}$ as a linear combination of mean volumes of four irreducible configurations $(3320)$, $(2222)$, $(3311)$, $(3221)$, in which the points forming the random tetrahedron are chosen from sets of lower dimensions.
\begin{itemize}
    \item $(3320):$ two points inside, one on a face and the fourth being a vertex,
    \item $(2222):$ points on faces only, one on each face,
    \item $(3311):$ two points inside and two on the opposite edges,
    \item $(3221):$ one point inside, two points on adjacent faces and the fourth being a vertex.
\end{itemize}
Reed was, however, only able to express $V_{3320}=3/64$ and $V_{2222}=1/27$ in a closed form. The last two configurations in the list were only solved by Mannion \cite{mannion1994volume} using a clever handling of improper integrals, giving
\begin{equation*}
    v_3^{(1)}(T_3) = \frac{13}{720} - \frac{\pi^2}{15015} \approx 0.017398.
\end{equation*}
However, Buchta and Reitzner \cite{buchta1992expected} obtained this value earlier using Efron's section formula \cite{efron1965convex}, c.f. \cite[p.~372]{mathai1999introduction}, which relates $v_n^{(1)}(K_3)$ for $n \geq 3$ and $K_3$ with a triple integral over cutting planes. By using Euler's polyhedral formula, Efron obtained
\begin{equation}
 v_n^{(1)}(K_3) = \frac{n}{n+2} - \frac{n(n+1)}{12}\expe{ \Gamma_{\! 3}(\mathbb{X}')^{n-1} + (1-\Gamma_{\! 3}(\mathbb{X}'))^{n-1}},
\end{equation}
where $\Gamma_{\! 3}(\mathbb{X}') = \vol_3 K_3^+/\vol_3 K_3$ is the volume fraction of one of the two parts $K_3^+\sqcup K_3^-$ into which $K_3$ is divided by a plane $\bm{\sigma} = \mathcal{A}(\mathbb{X}')$ going through the collection $\mathbb{X}' = (\vect{X}_1',\vect{X}_2',\vect{X}_3')$ of random points $\vect{X}_j'$, $j\in\{1,2,3\}$ drawn from $K_3$ uniformly and independently. We can transform the expected value above after some nontrivial algebraic manipulations into a set of calculable double integrals. The same technique enabled Zinani \cite{zinani2003expected} to deduce
\begin{equation*}
    v_3^{(1)}(C_3) = \frac{3977}{21600} - \frac{\pi^2}{2160} \approx 0.01384277.
\end{equation*}

The derivation of $v_3^{(1)}(C_3)$ itself is straightforward, but at the same time unworldly difficult, containing millions of intermediate integrals necessary to solve (to do so, Zinani used the package Mathematica 4.0). No other values of odd volumetric moments in three dimensions were known.

\vspace{1em}
In higher dimensions, there were no results for polytopes. Efron's formula completely breaks down because of the existence of cyclic polytopes. At least, for any $K_d$, we have the following relation shown by Buchta in \cite[p.~96]{buchta1986conjecture} by a simple projection argument
\begin{equation}\label{Eq:Buchta}
    v_{d+1}^{(1)}(K_d) = \frac{d+2}{2}\,v_d^{(1)}(K_d)
\end{equation}

\vspace{1em}
However, Efron's formula is not the only approach to volumetric moments. The original method by Reed and Mannion to obtain $v_3^{(1)}(T_3)$ was the Crofton's reduction technique. Another derivation of $v_3^{(1)}(T_3)$ and $v_3^{(1)}(C_3)$ which appeared recently and was not using Efron's formula (but equally difficult) was due to Philip \cite{philip2006tetrahedron,philip2007cube}. As we shall see later in this paper, there is yet another way. Had it not been for Philip's work, the author of this paper would not have been convinced that there might still be another method for obtaining volumetric moments.

\subsection{New results}
\subsubsection{Main theorem (Canonical section integral)}
The objective of this work is to extend the number of polytopes for which the volumetric moments are expressed exactly and to present the method to find it effectively. For this purpose, we developed a new technique which can handle these problems. Our method relied crucially on the following theorem:
\begin{theorem}\label{Thm:Canon}
Let $K_d$ be a $d$-dimensional convex body, $\mathbbm{x}' = (\vect{x}_1,\ldots,\vect{x}_d)$ a collection of $d$ points in $K_d$ and $\bm{\sigma} = \mathcal{A}(\mathbbm{x}') \in \mathbb{A}(d,d-1)$ be a hyperplane parametrised by $\bm{\eta}=(\eta_1\ldots,\eta_d)^\top \in \mathbb{R}^d$ as $\vect{x}\in \bm{\sigma} \Leftrightarrow \bm{\eta}^\top \vect{x} = 1$, then
\begin{equation}
 v_d^{(k)}(K_d) = \frac{(d-1)!}{ d^k}\int_{\mathbb{R}^d\setminus K_d^\circ} v_{d-1}^{(k+1)}(\bm{\sigma}\cap K_d) \,\zeta_d^{d+k+1}(\bm{\sigma}) \iota^{(k)}_d(\bm{\sigma}) \lambda_d(\dd \bm{\eta})
\end{equation}
for any real $k>-1$, where
\begin{equation}
    \zeta_d(\bm{\sigma}) = \frac{\vol_{d-1}(\bm{\sigma}\cap K_d)}{\|\bm{\eta}\|\vol_d K_d}, \qquad \iota^{(k)}_d(\bm{\sigma}) = \int_{K_d} |\bm{\eta}^\top \vect{x}-1|^k \lambda_d(\dd\vect{x}).
\end{equation}
\end{theorem}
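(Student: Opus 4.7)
The plan is to combine a base-height decomposition of $\Delta_d$ with the affine Blaschke--Petkantschin formula written in Cartesian parametrisation of the separating hyperplane. Starting from $v_d^{(k)}(K_d)=\mathbb{E}\,\Delta_d^k/(\vol_d K_d)^k$, I single out one of the $d+1$ points, say $\vect{X}_0$, and treat the remaining $\vect{X}_1,\ldots,\vect{X}_d$ as vertices of a base $(d{-}1)$-simplex lying in the hyperplane $\bm{\sigma}=\mathcal{A}(\vect{X}_1,\ldots,\vect{X}_d)$. The identity $\Delta_d=\tfrac{1}{d}\,h\,\Delta_{d-1}$, together with the formula $h=|\bm{\eta}^\top\vect{x}_0-1|/\|\bm{\eta}\|$ for the distance of a point $\vect{x}_0$ to $\bm{\sigma}$, lets the $\vect{x}_0$-integration be carried out first and produces precisely the factor $\|\bm{\eta}\|^{-k}\iota_d^{(k)}(\bm{\sigma})$ that appears in the statement.

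For the remaining $d$-fold integration I invoke the affine Blaschke--Petkantschin formula for hyperplanes,
\[
\prod_{i=1}^d \lambda_d(\dd\vect{x}_i) \;=\; (d-1)!\,\Delta_{d-1}\,\dd\bm{\sigma}\,\prod_{i=1}^d \lambda_{d-1}(\dd\vect{x}_i^\sigma),
\]
with $\dd\bm{\sigma}$ the motion-invariant measure on $\mathbb{A}(d,d-1)$. Re-expressing $\bm{\sigma}$ through $\bm{\eta}=\vect{n}/p$, where $p=1/\|\bm{\eta}\|$ is the distance of $\bm{\sigma}$ from the origin and $\vect{n}=\bm{\eta}/\|\bm{\eta}\|$ its unit normal, a short spherical-coordinate change of variables converts the invariant measure into its Cartesian form $\dd\bm{\sigma}=\lambda_d(\dd\bm{\eta})/\|\bm{\eta}\|^{d+1}$. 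This Cartesian Jacobian is the single genuine change-of-variables step in the argument, and I expect it to be the place where the technical care must be concentrated.

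After this substitution the freshly introduced $\Delta_{d-1}$ multiplies the $\Delta_{d-1}^k$ already present, so the inner $(\bm{\sigma}\cap K_d)^d$-integration becomes $\int \Delta_{d-1}^{k+1}\prod\lambda_{d-1}(\dd\vect{x}_i^\sigma)$. By the very definition of the metric moment on the lower-dimensional body $\bm{\sigma}\cap K_d$ this equals $v_{d-1}^{(k+1)}(\bm{\sigma}\cap K_d)\,\bigl(\vol_{d-1}(\bm{\sigma}\cap K_d)\bigr)^{d+k+1}$; this is the mechanism that forces the moment index of the cross-section to be $k+1$, not $k$.

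All that remains is to collect prefactors. Substituting $\vol_{d-1}(\bm{\sigma}\cap K_d)=\zeta_d(\bm{\sigma})\,\|\bm{\eta}\|\,\vol_d K_d$, the powers of $\|\bm{\eta}\|$ telescope against those assembled from the apex integration and the Cartesian Jacobian, while the powers of $\vol_d K_d$ cancel against the $(\vol_d K_d)^{d+1}$ in the denominator of $\mathbb{E}\,\Delta_d^k$ together with the $(\vol_d K_d)^{-k}$ in the definition of $v_d^{(k)}$. What survives is precisely $\tfrac{(d-1)!}{d^k}\,v_{d-1}^{(k+1)}(\bm{\sigma}\cap K_d)\,\zeta_d^{d+k+1}(\bm{\sigma})\,\iota_d^{(k)}(\bm{\sigma})$ integrated against $\lambda_d(\dd\bm{\eta})$. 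The restriction of the $\bm{\eta}$-domain to $\mathbb{R}^d\setminus K_d^\circ$ matches the support of the integrand: on its complement the hyperplane $\bm{\eta}^\top\vect{x}=1$ does not cross the interior of $K_d$, so $\zeta_d$ vanishes there and no contribution is lost.
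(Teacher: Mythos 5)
Your proposal is correct and follows essentially the same route as the paper: base-height splitting $\Delta_d=\tfrac1d\operatorname{dist}_{\bm\sigma}(\vect{X}_0)\Delta_{d-1}$ to integrate out the apex into $\|\bm\eta\|^{-k}\iota_d^{(k)}(\bm\sigma)$, followed by the Blaschke--Petkantschin formula with $p=q=d-1$ in the Cartesian parametrisation $\mu_{d-1}(\dd\bm\sigma)\propto\|\bm\eta\|^{-(d+1)}\lambda_d(\dd\bm\eta)$, which the paper packages as Corollary~\ref{BP:zeta} and you unwind inline. Your power counting (the exponent $d+k+1$ on the section volume and the cancellation of $\|\bm\eta\|$ and $\vol_d K_d$ factors) matches the paper's derivation exactly.
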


Unlike the approach which relies on Efron's formula, our own formula enables us to link odd moments with even moments on sections. However, as mentioned earlier, even moments are simple to compute generally. The theorem itself is proven in Section \ref{sec:Canon}. Let us look what results can be deduced using this formula...

\subsubsection{Three dimensions}
First, we found higher volumetric moments in the tetrahedron, cube, and octahedron. That is $v_3^{(k)}(T_3)$, $v_3^{(k)}(C_3)$ and $v_3^{(k)}(O_3)$. The results are summarized in Table \ref{tab:OddkresCO} below.

The theorem is proven 

\begin{table}[H]
    \centering
\begin{tabular}{|c|c|c|c|}
\hline
 \ru{1.2} & $k=1$ & $k=3$ & $k=5$\\
 \hline
 \ru{1.2}$v_3^{(k)}(T_3)$ & $\frac{13}{720} - \frac{\pi^2}{15015}$ & $\frac{733}{12600000}+\frac{79 \pi ^2}{2424922500}$ & $\frac{5125739}{4356374400000}-\frac{547 \pi ^2}{8943995970000}$\\[0.5ex]
 $v_3^{(k)}(C_3)$ & $\frac{3977}{216000}-\frac{\pi ^2}{2160}$ & $\frac{8411819}{450084600000}-\frac{\pi ^2}{3402000}$ & $\frac{306749173351 \pi ^2}{124439390208000}-\frac{2225580641145943786613}{91479676456923955200000}$\\[0.5ex]
 $v_3^{(k)}(O_3)$ & $\frac{19297 \pi ^2}{3843840}-\frac{6619}{184320}$ & $\frac{1628355709 \pi ^2}{19864965120000}-\frac{81932629}{103219200000}$ & $\frac{6356364544399 \pi ^2}{1611922729697280000}-\frac{205491225433}{5287025049600000}$ \\[0.5ex]
 \hline 
\end{tabular}
    \caption{Selected values of $v_3^{(k)}(T_3)$, $v_3^{(k)}(C_3)$ and $v_3^{(k)}(O_3)$ for odd $k$.}
    \label{tab:OddkresCO}
\end{table}

Next, we considered finding the mean tetrahedron volume $v_3^{(1)}(P_3)$ for various other polyhedra $P_3$ shown in Table \ref{tab:allsolids} (including the case of a tetrahedron and a cube).
\renewcommand{\ru}[1]{\rule{0pt}{#1 em}}

\begin{table}[H]
    \centering
\begin{GrayBox}
    \centering
    \begin{tabular}{cccc}
    \ru{4.0}
                \includegraphics[width=40pt]{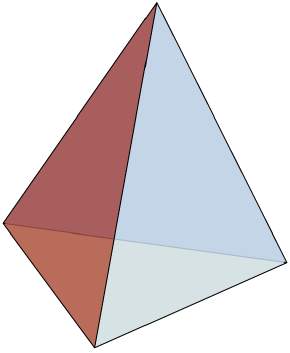} &
                \includegraphics[width=45pt]{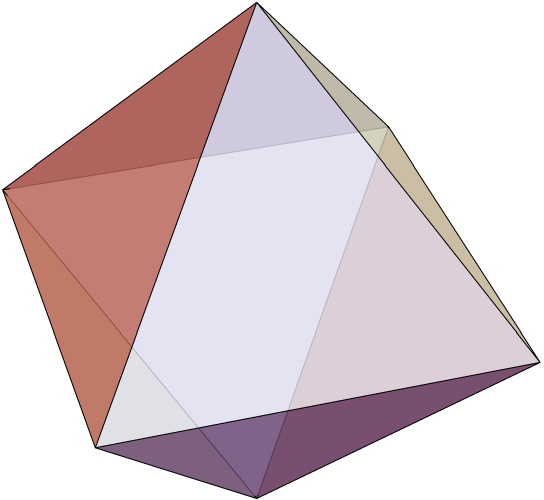} &
                \includegraphics[width=40pt]{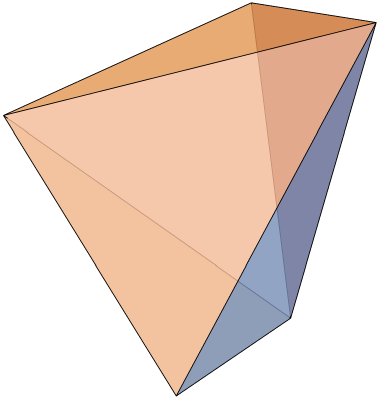}&
                \includegraphics[width=50pt]{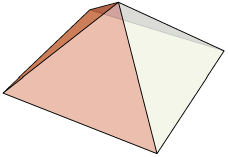} \\
       $T_3$, tetrahedron & $O_3$, octahedron & \begin{tabular}{c} tetrahedron \\ bipyramid\end{tabular} & \begin{tabular}{c} square \\ pyramid\end{tabular} \\[2ex]
    \ru{4.0}
                \includegraphics[width=40pt]{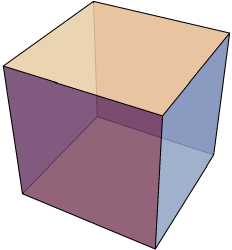} &
                \includegraphics[width=38pt]{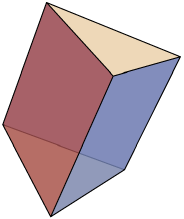}&
               \includegraphics[width=38pt]{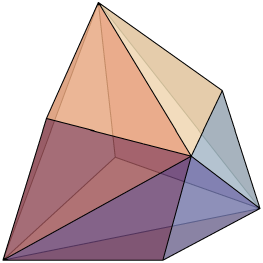}&
              \includegraphics[width=40pt]{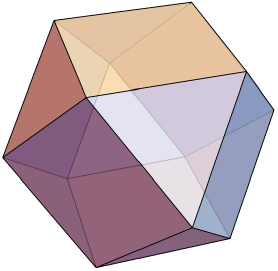}\\
       $C_3$, cube & \begin{tabular}{c} triangular \\ prism \end{tabular} & \begin{tabular}{c} triakis \\ tetrahedron* \end{tabular} & cuboctahedron \\[2ex]
       \ru{4.0}
       \includegraphics[width=40pt]{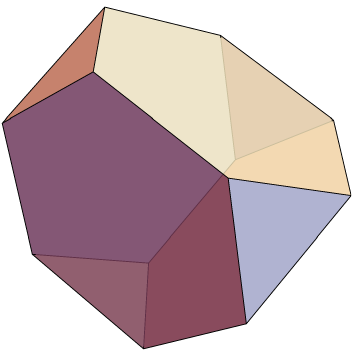}&
       \includegraphics[width=40pt]{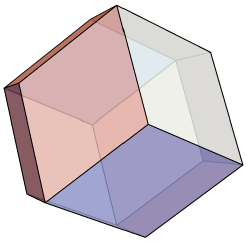}&
       \includegraphics[width=38pt]{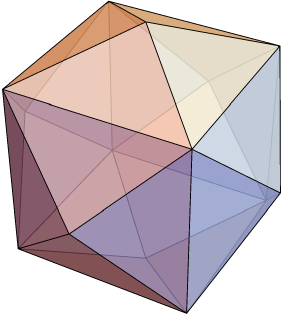}&
       \includegraphics[width=40pt]{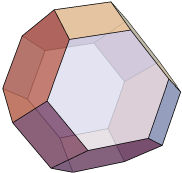}
       \\
    \begin{tabular}{c} truncated \\ tetrahedron \end{tabular} & \begin{tabular}{c} rhombic \\ dodecahedron\end{tabular} & \begin{tabular}{c} tetrakis \\ hexahedron* \end{tabular} & \begin{tabular}{c} truncated \\ octahedron* \end{tabular}\\[2ex]
    \end{tabular}
    \caption{Polyhedra for which we considered $v_3^{(1)}(K_3)$}
    \label{tab:allsolids}
\end{GrayBox}
\end{table}

To be honest with the reader, the polyhedra indicated by ${}^*$ have not been computed yet (section integrals are availible only in some particular genealogies), but they will surely appear in an updated version of this paper. Interestingly, in contrast to the well known tetrahedron and cube case, $v_3^{(1)}(P_3)$ often involves logarithms and special values of the so called \emph{dilogarithm}\index{dilogarithm function} function $\operatorname{Li}_2(x) = \sum_{n=1}^\infty x^2/n^2$, especially
\begin{equation}
    \operatorname{Li}_2\!\left(\tfrac{1}{4}\right) \approx 0.2676526390827326069191838284878115758198570669\ldots
\end{equation}

Table \ref{Tab:AllTetr} below summarises all new results of exact mean tetrahedron volume in various $3$-bodies $K_3$. For completeness, the previously known cases of a ball, tetrahedron and a cube have been added as well. Each $K_3$ is having volume one or alternatively, the right column displays $v_3^{(1)}(K_3)$.

\bgroup
\renewcommand{\ru}[1]{\rule{0pt}{#1 em}}
\newcommand{\tup}{\ru{2.2}} 
\renewcommand{\tupsingle}{\ru{-1}} 
\newcommand{\tdown}{1.5} 
\newcommand{\tdownsingle}{-0.2} 
\newcommand{\tupmodif}{-0.3} 

\renewcommand{\arraystretch}{0.8}
   \begin{table}[H]
\setlength{\tabcolsep}{27pt} 
\setlength\jot{-5pt} 
\begin{minipage}{\textwidth} 
    \begin{tabular}{|c|c|}
\hline
$K_3$ &  $v_3^{(1)}(K_3)$\\
\hline
\tupsingle  \!\!\!\!\!\!\!\begin{tabular}{c} \textit{ball}, \cite{hostinsky1925probabilites} \\ $\mathit{0.012587413}$ \end{tabular}\!\!\!\!\!\!\! & $\begin{aligned}
    & \\[\tupmodif em] & \textstyle\frac{9}{715} \\ &   
\end{aligned}$ \\[\tdownsingle em]
    \hline
\tup \!\!\!\!\!\!\!\begin{tabular}{c} rhombic \\ dodecahedron \\ $0.012938482$ \end{tabular}\!\!\!\!\!\!\! & $\begin{aligned}
  & \textstyle\frac{2421179003623}{17933819904000}+\frac{37061863 \pi^2}{29889699840}-\frac{9406373047 \ln 2}{9340531200}\\[1ex]
    & \textstyle-\frac{1757220593 \ln^2 2}{2490808320}+\frac{282589831 \ln 3}{283852800}-\frac{6078271 \operatorname{Li}_2\left(\frac{1}{4}\right)}{8515584}
    \end{aligned}$ \\[\tdown em]
    \hline
\tup \!\!\!\!\!\!\!\begin{tabular}{c} cuboctahedron \\ $0.013002516$ \end{tabular}\!\!\!\!\!\!\! & $\begin{aligned}
    & \textstyle\frac{117410162173}{525525000000}+\frac{8752199 \pi ^2}{2402400000}-\frac{192940695481 \ln 2}{105105000000}\\[1ex]
    & \textstyle-\frac{318759601 \ln^2 2}{250250000}+\frac{506316394917 \ln 3}{280280000000}-\frac{648098487 \operatorname{Li}_2\left(\frac{1}{4}\right)}{500500000}  
\end{aligned}$ \\[\tdown em]
   \hline
\tupsingle  \!\!\!\!\!\!\!\begin{tabular}{c} octahedron \\ $0.013637411$ \end{tabular}\!\!\!\!\!\!\! & $\begin{aligned}
    & \\[\tupmodif em] & \textstyle\frac{19297 \pi^2}{3843840}-\frac{6619}{184320} \\ &   
\end{aligned}$ \\[\tdownsingle em]
\hline
\tupsingle  \!\!\!\!\!\!\!\begin{tabular}{c} \textit{cube}, \cite{zinani2003expected} \\ $\mathit{0.013842776}$ \end{tabular}\!\!\!\!\!\!\! & $\begin{aligned}
    & \\[\tupmodif em] & \textstyle\frac{3977}{216000}-\frac{\pi^2}{2160} \\ &   
\end{aligned}$ \\[\tdownsingle em]
  \hline
\tup \!\!\!\!\!\!\!\begin{tabular}{c} truncated \\ tetrahedron \\ $0.014845102$ \end{tabular}\!\!\!\!\!\!\! & $\!\!\!\!\!\begin{aligned}
& \textstyle \!\!\!\frac{35604506258521}{162358039443600}-\frac{13447020779 \pi ^2}{96641690145}\!+\!\frac{9972537226592 \ln 2}{3382459155075}\!+\!\frac{3485442712 \ln ^2 2}{1400604205}\! \\[1ex]
        & \textstyle\!\! -\!\frac{8953623027 \ln 3}{7884520175}\!-\!\frac{53493528168 \ln 2 \ln 3}{32213896715}\!+\!\frac{53162662164 \operatorname{Li}_2\left(\frac{1}{4}\right)}{32213896715}\!\! \end{aligned}\!\!\!\!\!$ \\[\tdown em]
    \hline
\tup \!\!\!\!\!\!\!\begin{tabular}{c} triangular \\ bipyramid \\ $0.015082427$ \end{tabular}\!\!\!\!\!\!\! & $\!\!\!\!\!\begin{aligned}
  & \textstyle \!\!\frac{1712190037}{16812956160}\!+\!\frac{81471636487 \pi ^2}{907899632640}\!-\!\frac{185777703053 \ln 2}{50438868480}\!-\!\frac{909434448983 \ln ^2 2}{121053284352}\\[1ex]
  & \textstyle \!\!\!+\!\frac{3498264683 \ln
   3}{2401850880}\!+\!\frac{20912895 \ln 2 \ln 3}{2050048}\!-\!\frac{1887867 \ln^2 3}{585728}\!-\!\frac{62045573287 \text{Li}_2\left(\frac{1}{4}\right)}{57644421120}\!\!\!
    \end{aligned}\!\!\!\!\!$ \\[\tdown em]
 \hline
\tupsingle \!\!\!\!\!\!\!\begin{tabular}{c} triangular prism \\ $0.015357705$ \end{tabular}\!\!\!\!\!\!\! & $\begin{aligned}
    & \\[\tupmodif em] & \textstyle\frac{1859}{116640}-\frac{\pi ^2}{17010} \\ &   
\end{aligned}$ \\[\tdownsingle em]
 \hline
\tupsingle \!\!\!\!\!\!\!\begin{tabular}{c} square pyramid \\ $0.015782681$ \end{tabular}\!\!\!\!\!\!\! & $\begin{aligned}
    & \\[\tupmodif em] & \textstyle\frac{941 \pi^2}{72072}-\frac{977}{8640} \\ &   
\end{aligned}$ \\[\tdownsingle em]
 \hline
\tupsingle  \!\!\!\!\!\!\!\begin{tabular}{c} \textit{tetrahedron}, \cite{buchta1992expected} \\ $\mathit{0.017398239}$ \end{tabular}\!\!\!\!\!\!\! & $\begin{aligned}
    & \\[\tupmodif em] & \textstyle\frac{13}{720}-\frac{\pi^2}{15015} \\ &   
\end{aligned}$ \\
    \hline
    \end{tabular} 
\end{minipage}
\caption{Mean tetrahedron volume $v_3^{(1)}(K_3)$ in various bodies $K_3$ \label{Tab:AllTetr}}
\end{table}
\egroup

Note that none of the results above can be derived by hand. In principle, it could, but it would require superhuman skills we do not posses. Like Zinani, we used CAS package Mathematica (11.0 -- 14.0) to deduce the intermediate steps (see codes in appendices).

\subsubsection{Higher dimensions}
Also, our another goal is to present a new technique and deduce the values of $v_d^{(k)}(P_d)$ for various odd $k$ and $d=3,4,5$ in the most elementary way (for even $k$, they are trivial). The results for $T_d$ are shown in Table \ref{tab:OddkresT}.
\begin{table}[H]
    \centering
\begin{tabular}{|c|c|}
\hline
 \ru{1.2} & $v_d^{(1)}(T_d)$\\
 \hline
 \ru{1.2}$d=3$ & $\frac{13}{720} - \frac{\pi^2}{15015}$\\[0.5ex]
 $d=4$ & $\frac{97}{27000}-\frac{2173 \pi ^2}{52026975}$ \\[0.5ex]
 $d=5$ & $\frac{2207}{3265920}-\frac{244129 \pi ^2}{14522729760}+\frac{73522 \pi ^4}{541513323351}$ \\[0.5ex]
 $d=6$ & $\frac{26609}{217818720}-\frac{3396146609 \pi ^2}{621871356506400}+\frac{1318349152898 \pi ^4}{12180206401298390455}$ \\[0.5ex]
 \hline
 \multicolumn{2}{c}{}\\[-2ex]
 \hline
 \ru{1.2} & $v_d^{(3)}(T_d)$\\
 \hline
 \ru{1.2}$d=3$ & $\frac{733}{12600000}+\frac{79 \pi ^2}{2424922500}$ \\[0.5ex]
 $d=4$ & $\frac{1955399}{3403417500000}+\frac{63065881 \pi ^2}{39669996140775000}$ \\[0.5ex]
 $d=5$ & $ \frac{362173019}{98363448852480000}+\frac{10217818563857 \pi ^2}{557436796045056999751680}+\frac{602363516243 \pi^4}{569934065465972279392320}
$\\[0.5ex]
\hline 
 \multicolumn{2}{c}{}\\[-2ex]
 \hline
 \ru{1.2} & $v_d^{(5)}(T_d)$\\
 \hline
 \ru{1.2}$d=3$ & $\frac{5125739}{4356374400000}-\frac{547 \pi ^2}{8943995970000}$\\[0.5ex]
 $d=4$ & $\frac{12443146181}{9803685146371200000}-\frac{1262701803371 \pi ^2}{3557043272871373325040000}$\\[0.5ex]
\hline 
\end{tabular}
    \caption{Selected values of $v_d^{(k)}(T_d)$ for odd $k$ and $d=3,4,5,6$.}
    \label{tab:OddkresT}
\end{table}

In higher dimensions in general, other higher order \emph{polylogarithm} functions will appear, that is $\operatorname{Li}_s(x) = \sum_{n=1}^\infty x^n/n^s$. As a consequence, in four dimensions for example, many exact formulae involve \emph{Ap\'{e}ry's constant} \index{Ap\'{e}ry's constant} (which coincides with $\operatorname{Li}_3(1)$):
\begin{equation}
    \zeta(3) = \sum_{n=1}^\infty \frac{1}{n^3} \approx 1.20205690315959428539973816151,\ldots
\end{equation}

An example is the volumetric moments of the tesseract, which are shown in Table \ref{tab:OddkresC4}.
\begin{table}[H]
    \centering
\begin{tabular}{|c|c|}
\hline
 \ru{1.2} & $v_4^{(k)}(C_4)$\\
 \hline
 \ru{1.2}$k=1$ & $\frac{31874628962521753237}{1058357013719040000000}-\frac{26003 \pi^2}{1399680000}+\frac{610208 \ln 2}{1913625}-\frac{536557 \zeta (3)}{2592000}$\\[0.5ex]
 $k=3$ & $\frac{19330626155629115959}{1682723192209145856000000}-\frac{52276897 \pi^2}{216801070940160000}+\frac{10004540239 \ln 2}{77977156950000}-\frac{6155594561 \zeta (3)}{73741860864000}$\\[0.5ex]
 \hline 
\end{tabular}
    \caption{Values of $v_4^{(k)}(C_4)$ for $k=1,3$.}
    \label{tab:OddkresC4}
\end{table}

\newpage
\section{Preliminaries}
\subsection{Symmetries and genealogic decomposition}
\subsubsection{Configurations}
Let $\mathcal{G}(P_d)$ be the group of all isometries of $P_d$ (the \emph{symmetric group}\index{symmetry group} of $P_d$). That is, $\mathcal{G}(P_d)$ is isomorphic to the group of permutations of vertices of $P_d$ such that it leaves $P_d$ unchanged upto rigid transformations (including reflections). Note that in $d=3$, $\mathcal{G}(P_d)$ only consists of \emph{rotations, reflections and improper rotations}\index{improper rotation}. In \textbf{Schoenflies notation}\index{Schoenflies notation}, they are denoted $C_n, \sigma, S_n$, respectively (together with inversion $I$ and identity $E$). Let us select some subset $S$ of vertices $V$ of $P_d$. We can imagine selecting vertices by colouring them (black/white) to create solid $P_d(S)$. We denote $\mathcal{P}_d$ as the set of all those polytopes with pre-selected (coloured) vertices. We say two $P_d(S_1),P_d(S_2) \in \mathcal{P}_d$ are equivalent if there is $g \in \mathcal{G}(P_d)$ such that $g P_d(S_2) = P_d(S_1)$. Moreover we say they are section equivalent if $g P_d(S_2) = P_d(S_1)$ or $g P_d(S_2) = P_d(V\setminus S_1)$. We see that the first condition is more strict since in the latter case, we also identify two coloured polytopes with switched colours. We call the representants of all equivalent classes of coloured polytopes as \textbf{configurations}\index{configuration}. For example, all isometries of a regular octahedron are
\begin{equation}
\mathcal{G}(O_3) = \{E,6C_2,8C_3,6C_4,3C_4^2,I,3\sigma_h,6\sigma_d,6S_4,8S_6\}
\end{equation}

\subsubsection{Weights and orders}\index{configuration!weight}
The size of an orbit of some configuration $C=P_d(S)$ with selected representant vertices $S$ is by definition $o_C = |\mathcal{G}(P_d) C|$, where $\mathcal{G}(P_d)C = \{gC\,|\,g \in \mathcal{G}(P_d)\}$ is the \textbf{orbit}\index{orbit} of $C$. By \emph{orbit-stabilizer lemma}\index{orbit-stabilizer lemma}, $o_C = |\mathcal{G}(P_d)|/|\mathcal{G}_C(P_d)|$, where $\mathcal{G}_C(P_d) = \{g\in \mathcal{G}(P_d) \,| \, gC = C\}$ is the \textbf{stabilizer} subgroup\index{stabilizer subgroup}. The total number of equivalent configurations is given by Burnside's lemma\index{Burnside lemma}. We can find those configurations via the help of a computer, see GECRA (Code \ref{code:GECRA}) in the appendix. The procedure is as follows: First, we represent $\mathcal{G}(P_d)$ as a subgroup of the symmetry group $\mathcal{S}_{|V|}$ with $|V|$ whose elements (permutations) which act of vertices of $P_d$ we represent as permutation matrices. This representation is of course an isomorphism. Then, we can represent a selection (colouring) of vertices $S$ as a vector $\vect{s}$ of length $|V|$ of ones and zeros. Let us denote the set of all such vectors as $\mathbb{S}$. There are $2^{|V|}$ such vectors. The set of all configurations is then simply the classes
\begin{equation}
    \bigcup_{\vect{s} \in \mathbb{S}} |\mathcal{S}_{|V|} \vect{s}| = \bigcup_{\vect{s} \in \mathbb{S}} \{g \vect{s} \, | \, g \in \mathcal{S}_{|V|} \}.
\end{equation}
So far, we have not employed the section equivalence $g C \sim C'$, where we write $C' = P_d(V\setminus S)$. Therefore, for a given configuration $C$ with $S$ selected (coloured) vertices out of total $n$ vertices of $P_d$, we define the section weight $w_C$ as the size of the orbit of $C$ with respect to the section equivalence, that is, by symmetry
\begin{equation}
    w_C = \begin{cases}
    o_C, & |S| < n/2\\
    o_C/2, & |S| = n/2.
    \end{cases}
\end{equation}
Since $\bm{\sigma} \cap P_d$ is also a polytope (more precisely, a $(d-1)$-polytope), we define the \emph{order}\index{configuration!order} $n_C$ of a configuration $C$ as the number of vertices of $\bm{\sigma} \cap P_d$. We claim this number is well defined for a given configuration.

\subsubsection{Realisable configurations}
\vspace{-1em}
\begin{table}[H]
\centering
\setlength{\tabcolsep}{5.2pt}
\begin{GrayBox}
\centering
\begin{tabular}{ccccccc}
    \!\!$\mathcal{G}_C(O_3):$\!\! & \!$\mathcal{G}(O_3)$\! & $\begin{Bmatrix} E,2C_4,C_4^2\\
    2\sigma_h,2\sigma_d \end{Bmatrix}$ & $\begin{Bmatrix} E, C_2,\\ \sigma_h, \sigma_d \end{Bmatrix}$ & $\begin{Bmatrix} E,2C_2,2C_4,3C_4^2,\\
    I,3\sigma_h,2\sigma_d,2S_4\\
     \end{Bmatrix}$ & $\begin{Bmatrix} E,2C_3,\\ 3\sigma_d\end{Bmatrix}$ & $\begin{Bmatrix} E,C_2,\\
     \sigma_h,\sigma_d \end{Bmatrix}$ \\[1em]
    \begin{tabular}{c}
         \!$C:$\!  \\[3em] 
    \end{tabular}
    &    
    \!\includegraphics[width=45pt]{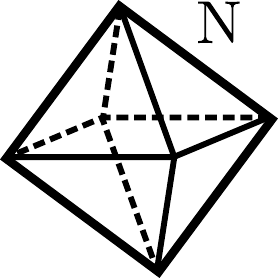}\!
    &
    \includegraphics[width=45pt]{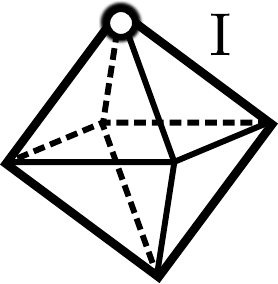}
    &
    \includegraphics[width=45pt]{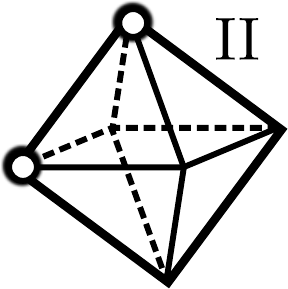}
    &
    \includegraphics[width=45pt]{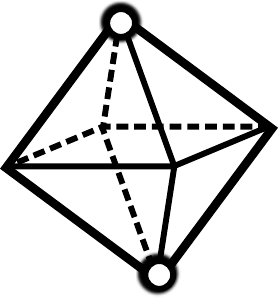}
    &
    \includegraphics[width=45pt]{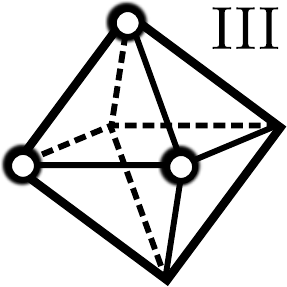}
    &
    \includegraphics[width=45pt]{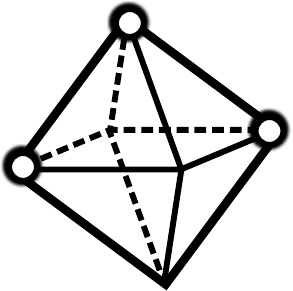}
    \\[-1.3em]
    \!\!$o_C:$\!\! & $1$ & $6$ & $12$ & $3$ & $8$ & $12$
    \\
    \!\!$w_C:$\!\! & $1$ & $6$ & $12$ & $3$ & $4$ & $6$
\end{tabular}
\end{GrayBox}
\vspace{-1em}
\caption{Section equivalent configurations of $O_3$}
\label{tab:realconocta}
\end{table}    

In the example above shown in Table \ref{tab:realconocta}, configurations $\mathrm{N},\mathrm{I},\mathrm{II},\mathrm{III}$ are those whose points can be separated by a plane. In general that is, there exists a $(d-1)$-plane $\bm{\sigma}$ such that all vertices in $S$ lie on side of $\bm{\sigma}$ and all remaining vertices $V\setminus S$ lie on the other side. Those configurations are said to be \emph{realisable}\index{configuration!realisable}. We write $\bm{\sigma}/ C$, where $C$ is a configuration and we write $\mathcal{C}(P_d)$ for the set of all realisable configurations. We can check whether a configuration is realisable by checking whether there is a nonempty subset of $\mathbb{R}^d$ satisfying Equations \eqref{Eq:Separ}.

\subsubsection{Genealogy}
Realisable configurations have a unique property -- assuming $P_d$ is convex, we can obtain them from realisable configurations with fewer coloured vertices by successively adding (colouring) another neighbouring vertex. This corresponds to a continuous shift of $\bm{\sigma}$. The graph (in fact, a \emph{Hasse diagam}\index{Hasse diagram}) of such successions is called the \textbf{genealogy}\index{Genealogy} of $P_d$ configurations with section weights $w_C$. Generalogies for selected polyhedra are shown in Appendix \ref{Apx:Gen}. For example, the genealogy of the octahedron section equivalent configurations from Table \ref{tab:realconocta} are shown in Figure \ref{fig:OCTAHE_GENEALOGY}.

\subsubsection{Decomposition of functionals}
Let $P \subset \mathbb{R}^d$ be a convex $d$-polytope. Consider an affinely invariant functional
\begin{equation}
    F(P) = \frac{1}{(\vol_d P)^d} \int_{P^d} f(\mathbbm{x}) \lambda_d^d (\dd \mathbbm{x}).
\end{equation}
By symmetry, we can decompose this functional as follows
\begin{equation}\label{Eq:DecoCon}
F(P) = \sum_{C \in \mathcal{C}(P)} w_C F(P)_C,
\end{equation}
where
\begin{equation}
F(K)_C = \frac{1}{(\vol_d P)^d} \int_{P^d} \mathbbm{1}\{\mathcal{A}(\mathbbm{x})/ C\} f(\mathbbm{x})\lambda_d^d (\dd \mathbbm{x}).
\end{equation}
Note that the property $\mathcal{A}(\mathbbm{x})/ C$ is also affinely invariant, since any affine transformation does not change the set of vertices $S$ separated by $\bm{\sigma}$. As a consequence, also $F_C(K)$ stays invariant under affine trasformations of $K$. By defining $P_C = \{\vect{x} \in P \, |\, \mathcal{A}(\mathbbm{x})/C\}$, we may also write
\begin{equation}
F(K)_C = \frac{1}{(\vol_d P)^d} \int_{P_C^d} f(\mathbbm{x})\lambda_d^d (\dd \mathbbm{x}).
\end{equation}

\subsection{Integral calculus on real affine subspaces}
First, we shall discuss the common techniques of multidimensional integration. The notation used in this section is borrowed from the textbook \emph{Lectures on convex geometry} by Hug and Weil \cite{hug2020lectures}. Once again, let us recall some basic facts and definitions.
\begin{definition}[$\mathbb{S}^{d-1},\omega_d$] Let $\mathbb{S}^{d-1}$ be a unit sphere in $\mathbb{R}^d$ with the usual surface area measure $\sigma_{d}(\cdot)$. That is, for the surface area of $\mathbb{S}^{d-1}$, we get
\begin{equation}
    \omega_d = \int_{\mathbb{S}^{d-1}} \sigma_{d}(\dd \vect{u}) = \sigma_{d}(\mathbb{S}^{d-1}) = \frac{2\pi^{\tfrac{d}{2}}}{\Gamma(\tfrac{d}{2})}.
\end{equation}
Also note that we can decompose $\vect{x} = r \vect{u}$, where $\vect{u} \in \mathbb{S}^{d-1}$ and $r \in (0,\infty)$, the usual Lebesgue measure $\lambda_d$ splits into radial and angular part as $\lambda_d(\dd \vect x) = r^{d-1} \dd r \sigma_d(\dd \vect{u})$.
\end{definition}

\begin{definition}[$\solB_d,\kappa_d$]
We write $\solB_d\subset \mathbb{R}^d$ for the unit ball (with unit radius) and $\kappa_d$ for its volume. Splitting the Lebesgue measure into radial and angular part,
\begin{equation}
\kappa_d = \vol_d\solB_d = \int_{\solB_d}\lambda_d(\dd \vect{x}) = \omega_d \int_0^1 r^{d-1}\ddd r = \omega_d/d.
\end{equation}
\end{definition}

\begin{definition}
We denote $\mathbb{G}(d,p)$\index{Grassmannian!linear} as the set of all linear $p$-dimensional subspaces of $\mathbb{R}^d$, this set is often called the (linear) Grassmannian. More generally, we denote $\mathbb{A}(d,p)$\index{Grassmannian!affine} as the set of all $p$-dimensional affine subspaces of $\mathbb{R}^d$ ($p$-planes), this set is called the affine Grasmannian.
\end{definition}

\begin{remark}
Both spaces $\mathbb{G}(d,p)$ and $\mathbb{A}(d,p)$ have a finite-dimensional basis. More concretely, we have $\dim \mathbb{G}(d,p) = (d-p)p$ and $\dim\mathbb{A}(d,p) = (d-p)(p+1)$.
\end{remark}

\begin{definition}
Let $K_d \subset \mathbb{R}^d$. We define $\mathbb{G}_{K_d}(d,p) = \{\bm{\gamma} \in \mathbb{G}(d,p)\, |\, \bm{\gamma} \cap K_d \neq \emptyset \}$ and analogously, $\mathbb{A}_{K_d}(d,p) = \{\bm{\sigma} \in \mathbb{A}(d,p)\, |\, \bm{\sigma} \cap K_d \neq \emptyset \}$.
\end{definition}

\begin{definition}
Let $\nu_p$ be the probability Haar measure\index{Haar measure} on $\mathbb{G}(d,p)$. That is, $\nu_p$ is invariant under action of the group of proper rigid sphere transformations $\mathcal{SO}(n)$\index{group!special orthogonal} and $\nu_p(\mathbb{G}(d,p)) = 1$.
\end{definition}

\begin{definition}
We define the standard Haar measure $\mu_p$ on $\mathbb{A}(d,p)$ by
\begin{equation}
\mu_p(\cdot) = \int_{\mathbb{G}(d,p)} \int_{\bm{\gamma}_{\!\perp}} \mathbbm{1}\{\bm{\gamma}+\vect{y} \in \cdot\} \lambda_{d-p}(\dd \vect{y}) \nu_p(\dd\bm{\gamma}),
\end{equation}
where $\bm{\gamma}_{\perp} \in \mathbb{G}(d,d-p)$ is the linear space orthogonal to $\bm{\gamma}$. That is, $\bm{\gamma}_{\perp}  \oplus \bm{\gamma} = \mathbb{R}^d$.
\end{definition}
\begin{lemma}
$\mu_p(\mathbb{A}_{\solB_d}(d,p)) = \kappa_{d-p}=\omega_{d-p}/(d-p)$.
\end{lemma}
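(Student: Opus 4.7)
The plan is to unfold the definition of $\mu_p$ and reduce the problem to computing, for a fixed linear $p$-subspace $\bm{\gamma}$, the $(d-p)$-dimensional Lebesgue measure of those translations $\vect{y} \in \bm{\gamma}_\perp$ for which the translated plane $\bm{\gamma}+\vect{y}$ still meets the unit ball $\solB_d$. First, I would write
\begin{equation*}
\mu_p(\mathbb{A}_{\solB_d}(d,p)) = \int_{\mathbb{G}(d,p)} \int_{\bm{\gamma}_\perp} \mathbbm{1}\{(\bm{\gamma}+\vect{y}) \cap \solB_d \neq \emptyset\} \, \lambda_{d-p}(\dd\vect{y}) \, \nu_p(\dd\bm{\gamma}).
\end{equation*}

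Next, I would observe that because $\vect{y} \in \bm{\gamma}_\perp$, the point $\vect{y}$ is the foot of the perpendicular from the origin onto the affine plane $\bm{\gamma}+\vect{y}$, so the distance from the origin to that plane equals $\|\vect{y}\|$. Consequently $(\bm{\gamma}+\vect{y}) \cap \solB_d \neq \emptyset$ if and only if $\|\vect{y}\| \leq 1$, i.e.\ $\vect{y}$ lies in the unit ball of the $(d-p)$-dimensional Euclidean space $\bm{\gamma}_\perp$. Thus the inner integral evaluates to $\vol_{d-p}(\solB_{d-p}) = \kappa_{d-p}$, independently of the choice of $\bm{\gamma}$.

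Finally, since $\nu_p$ is a probability measure on $\mathbb{G}(d,p)$, pulling the constant $\kappa_{d-p}$ outside gives
\begin{equation*}
\mu_p(\mathbb{A}_{\solB_d}(d,p)) = \kappa_{d-p} \int_{\mathbb{G}(d,p)} \nu_p(\dd\bm{\gamma}) = \kappa_{d-p},
\end{equation*}
and the identity $\kappa_{d-p} = \omega_{d-p}/(d-p)$ has already been recorded in the definition of $\kappa_d$. There is no real obstacle here: the only nontrivial observation is the geometric identification of $\|\vect{y}\|$ with the distance from the origin to the affine plane $\bm{\gamma}+\vect{y}$, which follows immediately from $\vect{y} \perp \bm{\gamma}$; everything else is a direct unpacking of definitions together with the rotational invariance packaged in $\nu_p$ being a probability measure.
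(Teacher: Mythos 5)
Your argument is correct and essentially identical to the paper's proof: both reduce the claim to the observation that, for fixed $\bm{\gamma}$, the set of $\vect{y}\in\bm{\gamma}_\perp$ with $(\bm{\gamma}+\vect{y})\cap \solB_d\neq\emptyset$ is the unit ball $\solB_d\cap\bm{\gamma}_\perp$ of the $(d-p)$-dimensional space $\bm{\gamma}_\perp$, whose volume $\kappa_{d-p}$ is independent of $\bm{\gamma}$, so the outer integral against the probability measure $\nu_p$ contributes a factor of one. The only difference is presentational: the paper invokes symmetry to fix $\bm{\gamma}_0$ up front, while you compute the inner integral directly and note it is constant.
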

\begin{proof}
By symmetry, we have for any $\gamma_0 \in \mathbb{G}(d,p)$,
\begin{equation}
\mu_p(\mathbb{A}_{\solB_d}(d,p)) = \int_{\bm{\gamma}_{\!\perp}} \mathbbm{1}\{\bm{\gamma}+\vect{y} \in \mathbb{A}_{\solB_d}(d,p)\} \lambda_{d-p}(\dd \vect{y}) = \vol_{d-p}(\solB_d\cap \bm{\gamma}_\perp) = \kappa_{d-p}.
\end{equation}
\end{proof}


\subsubsection{Cartesian parametrisation}\index{Grassmannian!Cartesian parametrisation}
In the case of $p=d-1$, the affine Grassmannian $\mathbb{A}(d,d-1)$ consists of hyperplanes of dimension $d-1$. Note that $\dim \mathbb{A}(d,d-1) = d$ so in order to parametrize the space of all affine planes, we need exactly $d$ parameters. One choice of those parameters are the coordinates of the closest point to a given hyperplane, we write $\bm{\xi}=(\xi_1,\ldots,\xi_d)^\top$ to be the vector from the origin to the closest point on the hyperplane $\bm{\sigma}$. Another choice of parametrisation is by using spherical inversion of $\bm{\xi}$. Namely,
\begin{equation}
    \bm{\eta} = \frac{\bm{\xi}}{\bm{\xi}^\top\bm{\xi}} = \frac{\bm{\xi}}{\| \bm{\xi}\|^2}.
\end{equation}
so $\|\bm{\xi}\|=1/\|\bm{\eta}\|$. There is a nice interpretation of $\bm{\eta}$. Namely, a plane $\bm{\sigma}$ defined uniquely by the vector $\bm{\eta}$ has a nonempty intersection with convex body $K_d \subset \mathbb{R}^d$ if and only if $\bm{\eta}$ does not lie in the \emph{polar body}\index{polar body} $K_d^\circ$ defined as
\begin{equation}
    K_d^\circ = \{  \vect{x} \in \mathbb{R}^d \, |\, \vect{x}^\top \vect{y} \leq 1, \vect{y} \in K_d \}.
\end{equation}
This follows from the fact that the points $\vect{x}$ on the hyperplane $\bm{\sigma} \in \mathbb{A}(d,d-1)$ satisfy $\bm{\eta}^\top \vect{x} = 1$. The following lemma gives us then the Jacobian of transformation between the standard Haar measure on a Grasmannian of hyperplanes and the Lebesgue measure of the closest point intercepts:

\begin{lemma}
Let $\bm{\sigma} \in \mathbb{A}(d,d-1)$ and $\bm{\eta} =(\eta_1,\ldots,\eta_d)^\top$ be the plane vector associated to $\bm{\sigma}$ such that $\vect{x} \in \bm{\sigma} \Leftrightarrow \bm{\eta}^\top \vect{x} = 1$, then
\begin{equation}
\mu_{d-1}(\dd \bm{\sigma}) = \frac{2}{\omega_d  }\frac{1}{\|\bm{\eta}\|^{1+d}} \lambda_d(\dd \bm{\eta}).
\end{equation}
\end{lemma}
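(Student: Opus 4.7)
The plan is to unfold the definition of $\mu_{d-1}$ into an iterated integral over unit normals and signed offsets, and then change variables to $\bm{\eta}$ via a simple polar-coordinate computation.

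First, each linear hyperplane $\bm{\gamma} \in \mathbb{G}(d,d-1)$ is given by $\bm{\gamma} = \vect{u}^\perp$ for $\vect{u} \in \mathbb{S}^{d-1}$, and each $\vect{y} \in \bm{\gamma}_\perp$ by $\vect{y} = t\vect{u}$ with $t \in \mathbb{R}$, so $\lambda_1(\dd\vect{y}) = \dd t$. Since $\nu_{d-1}$ is the rotation-invariant probability measure and each $\bm{\gamma}$ corresponds to the antipodal pair $\pm \vect{u}$, integration against $\nu_{d-1}$ lifts to $\omega_d^{-1} \sigma_d$ on $\mathbb{S}^{d-1}$ (the normalisation is correct, as the lift double-counts each $\bm{\gamma}$ while $\int_{\mathbb{S}^{d-1}} \sigma_d = \omega_d$). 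The definition of $\mu_{d-1}$ therefore rewrites, for any test functional $g$ on $\mathbb{A}(d,d-1)$, as
\begin{equation*}
\mu_{d-1}(g) = \frac{1}{\omega_d} \int_{\mathbb{S}^{d-1}} \int_\mathbb{R} g(\vect{u}^\perp + t\vect{u}) \, \dd t \, \sigma_d(\dd \vect{u}).
\end{equation*}

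Next, I would restrict the $(\vect{u}, t)$-integration to the fundamental domain $\{t > 0\}$ of the involution $(\vect{u}, t) \sim (-\vect{u}, -t)$, which produces the same hyperplane and under which the integrand is invariant; this absorbs a factor of $2$ into the prefactor. On this domain the map $(\vect{u}, t) \mapsto \bm{\eta} = \vect{u}/t$ is a bijection onto $\mathbb{R}^d \setminus \{0\}$, with inverse $\vect{u} = \bm{\eta}/\|\bm{\eta}\|$, $t = 1/\|\bm{\eta}\|$; the defining relation $\vect{u}^\top \vect{x} = t$ for $\vect{x} \in \bm{\sigma}$ becomes $\bm{\eta}^\top \vect{x} = 1$, matching the parametrisation of the lemma. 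The Jacobian is elementary: writing $\bm{\eta} = r\vect{v}$ in polar form gives $\lambda_d(\dd \bm{\eta}) = r^{d-1} \dd r \, \sigma_d(\dd \vect{v})$, and substituting $\vect{v} = \vect{u}$, $r = 1/t$ with $|\dd r| = t^{-2} |\dd t|$ yields $\dd t \, \sigma_d(\dd \vect{u}) = \|\bm{\eta}\|^{-(d+1)} \lambda_d(\dd \bm{\eta})$.

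Plugging this back gives $\mu_{d-1}(g) = (2/\omega_d) \int_{\mathbb{R}^d} g(\bm{\sigma}(\bm{\eta})) \, \|\bm{\eta}\|^{-(d+1)} \lambda_d(\dd \bm{\eta})$, from which the claim follows by comparing differential forms. The only real obstacle is bookkeeping the factor of $2$: it comes from the 2-to-1 parametrisation of unoriented hyperplanes by oriented normal–distance pairs, while the Jacobian itself is a routine polar computation and the set of hyperplanes through the origin (on which $\bm{\eta}$ is not defined) has measure zero and so is harmless.
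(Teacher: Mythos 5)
Your proof is correct, but it takes a genuinely different route from the paper. The paper does not unfold the definition of $\mu_{d-1}$ at all: it first verifies by a direct Jacobian computation (via the Matrix Determinant Lemma, after working out how $\bm{\eta}$ transforms under a rigid motion $g(M,\vect{b})$) that $\|\bm{\eta}\|^{-1-d}\lambda_d(\dd\bm{\eta})$ is invariant under proper rigid motions, then invokes uniqueness of the Haar measure on $\mathbb{A}(d,d-1)$ up to a constant, and finally pins down the constant $c=2/\omega_d$ by evaluating both sides on the set of hyperplanes meeting the unit ball, using the auxiliary lemma $\mu_p(\mathbb{A}_{\solB_d}(d,p))=\kappa_{d-p}$ together with the spherical-coordinate integral $\omega_d\int_1^\infty r^{d-1}r^{-1-d}\,\dd r=\omega_d$. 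Your argument instead computes $\mu_{d-1}$ directly from its defining disintegration over $\mathbb{G}(d,d-1)$, lifting $\nu_{d-1}$ to $\omega_d^{-1}\sigma_d$ on the sphere, passing to the fundamental domain $\{t>0\}$ of the antipodal involution (which is where the factor $2$ enters), and then performing the elementary polar change of variables $\bm{\eta}=\vect{u}/t$. Your route is more self-contained and makes the geometric origin of both the $2$ and the $\omega_d$ transparent, with no appeal to Haar uniqueness or to a separate normalisation computation; the paper's route buys an explicit description of the action of rigid motions on the parameter $\bm{\eta}$ (a computation of independent use) and sidesteps the bookkeeping of the two-to-one covering of unoriented hyperplanes by oriented normal--offset pairs, which is the one place in your argument where an error could realistically creep in. Both the factor of $2$ and the Jacobian $\dd t\,\sigma_d(\dd\vect{u})=\|\bm{\eta}\|^{-(d+1)}\lambda_d(\dd\bm{\eta})$ check out in your version.
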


\begin{proof} First, we show that our measure on the right hand side is invariant with respect to action of the group $\mathcal{G}(d)$\index{group!proper rigid motions} of all proper rigid motions in $\mathbb{R}^d$. We may view any $g(M,\vect{b}) \in \mathcal{G}(d)$ by its corresponding action on points $\vect{x} \in \mathbb{R}^d$. That is,
\begin{equation}\label{Eq:x'x}
    \vect{x}' = g(M,\vect{b})\circ\vect{x} = M \vect{x} + \vect{b}
\end{equation}
where $\vect{b}$ is a translation vector and the matrix $M$ corresponds to (proper) rotations, hence $M$ satisfies
\begin{equation}
\det M =1 \text{  and  } M^\top M = MM^\top = I_d,
\end{equation}
where $I_d$ is the $d\times d$ identity matrix\index{identity matrix}. Let us find $\bm{\sigma}' = g^{-1}(M,\vect{b})\circ (\bm{\sigma})$ onto which $\bm{\sigma}$ is mapped by applying $g^{-1}(M,\vect{b})$. Its associated plane vector $\bm{\eta}'$ must satisfy $\bm{\eta}'^\top \vect{x}' = 1
$. By multiplying Equation \eqref{Eq:x'x} by $M^\top$ from the left, we obtain $M^\top \vect{x}' = \vect{x} + M^\top \vect{b}$. Further multiplying by $\bm{\eta}^\top$ from the left, we get $\bm{\eta}^\top M^\top \vect{x}' = 1 + \bm{\eta}^\top M^\top \vect{b}$, from which we identify
\begin{equation}
\bm{\eta}' = g^{-1}(M,\vect{b})\circ \bm{\eta} = \frac{M \bm{\eta}}{1+\vect{b}^\top M \bm{\eta}}.
\end{equation}
For the norm, we have by using $M^\top M = I_d$,
\begin{equation}
\|\bm{\eta}'\| = \frac{\|\bm{\eta}\|}{|1+\vect{b}^\top M \bm{\eta}|}.   
\end{equation}
Let us calculate the Jacobian of transformation from $\bm{\eta}'$ to $\bm{\eta}$. We have
\begin{equation}
\frac{\partial \bm{\eta}'}{\partial \bm{\eta}} = \frac{M (1+\vect{b}^\top M \bm{\eta})-M\bm{\eta}\vect{b}^\top M}{(1+\vect{b}^\top M \bm{\eta})^2}  = M\frac{I_d -\frac{\bm{\eta}\vect{b}^\top M}{1+\vect{b}^\top M \bm{\eta}}}{1+\vect{b}^\top M \bm{\eta}},
\end{equation}
By Matrix Determinant Lemma,
\begin{equation}
    \det\left(\frac{\partial \bm{\eta}'}{\partial \bm{\eta}}\right) = \frac{\det M}{(1+\vect{b}^\top M \bm{\eta})^d} \left(1-\frac{\vect{b}^\top M\bm{\eta}}{1+\vect{b}^\top M \bm{\eta}}\right) = \frac{\det M}{(1+\vect{b}^\top M \bm{\eta})^{1+d}}.
\end{equation}
In total,
\begin{equation}
    \frac{1}{\|\bm{\eta}'\|^{1+d}} \lambda_d(\dd \bm{\eta}')
    = \frac{|1+\vect{b}^\top M \bm{\eta}|^{1+d}}{\|\bm{\eta}\|^{1+d}}\frac{\det M}{|1+\vect{b}^\top M \bm{\eta}|^{1+d}}  \lambda_d(\dd \bm{\eta}) 
    = \frac{1}{\|\bm{\eta}\|^{1+d}}\lambda_d(\dd \bm{\eta})
\end{equation}
for any $M$ and $\vect{b}$. Therefore, $\|\bm{\eta}\|^{-1-d} \lambda_d(\dd \bm{\eta})$ is a Haar measure on $\mathbb{A}(d,d-1)$ and as such, it must differ from $\mu_{d-1}(\dd \bm{\sigma})$ by a constant multiple \cite[Theorem 5.4]{hug2020lectures}, say
\begin{equation}
\mu_{d-1}(\dd \bm{\sigma}) = \frac{c}{\|\bm{\eta}\|^{1+d}} \lambda_d(\dd \bm{\eta})
\end{equation}
for some $c$. To check this constant is indeed $c=2/\omega_d$, let us calculate the $\mu_{d-1}$ measure over planes which pass trough $\solB_d$ (the unit ball with radius one). On one hand, by definition, we already know that $\mu_{d-1}(\mathbb{A}_{\solB_d}(d,d-1)) = \omega_1 = 2$. On the other, let us characterise the condition under which a $(d-1)$ hyperplane $\bm{\sigma}$ intercepts $\solB_d$. This happens exactly when the closest point on $\bm{\sigma}$ lies inside of $\solB_d$. That is, $\|\bm{\xi}\| < 1$, or equivalently $\|\bm{\eta}\|>1$. Hence, by using spherical coordinates and symmetry, $\lambda_d(\dd \bm{\eta}) = \omega_d r^{d-1} \ddd r$, where $r = \|\bm{\eta}\|$, and therefore
\begin{equation}
    \mu_{d-1}(\mathbb{A}_{\solB_d}(d,d-1)) = \int_{\mathbb{R}^d\setminus \solB_d} \frac{c}{\|\bm{\eta}\|^{1+d}}  \lambda_d(\dd\bm{\eta}) = \omega_d \int_1^\infty  c \frac{r^{d-1}}{ r^{1+d}} \dd r = \omega_d c,
\end{equation}
so $c=2/\omega_d$ indeed.
\end{proof}

\begin{remark}
Simple calculation of Jacobian of transformation between $\bm{\eta}$ and $\bm{\xi}$ (only the radial part is affected) reveals that
\begin{equation}
\mu_{d-1}(\dd \bm{\sigma}) = \frac{2}{\omega_d} \|\bm{\xi}\|^{1-d} \lambda_d(\dd \bm{\xi}).
\end{equation}
\end{remark}

\subsubsection{Blaschke-Petkantschin formula}\index{Blaschke-Petkantschin formula}
The following formula by Blaschke and Petkantschin enables us to reparametrize an integral over some set of points $\mathbbm{x}=(\vect{x}_0,\ldots,\vect{x}_p)$ as an integral over planes in $\mathbb{A}(d,q)$, $q\geq p$ on which these points lie.
\begin{theorem}\label{Thm:BP} Let $f:(\mathbb{R}^d)^{p+1} \to \mathbb{R}$ be a Lebesgue integrable function of a collection $\mathbbm{x} = (\vect{x}_0,\ldots,\vect{x}_p)$ of points $\vect{x}_j \in \mathbb{R}^d$, $j=0,\ldots,p$. Denote $\solH_p = \hull (\mathbbm{x})$ and $\Delta_p = \vol_p \solH_p$, then for any integer $q$ such that $0 \leq p \leq q \leq d$,
\begin{equation}
    \int_{(\mathbb{R}^d)^{p+1}} f(\mathbbm{x}) \lambda_d^{p+1}(\dd\mathbbm{x}) = \beta_{dqp}\int_{\mathbb{A}(d,q)}\int_{\bm{\sigma}^{p+1}} f(\mathbbm{x}) \Delta_p^{d-q} \lambda_q^{p+1} (\dd\mathbbm{x}) \mu_q(\dd \bm{\sigma}),
\end{equation}
where
\begin{equation}
    \beta_{dqp} = (p!)^{d-q} \pi ^{\frac{1}{2} p (d-q)} \prod_{j=0}^{p-1} \frac{\Gamma \left(\frac{q-j}{2}\right)}{\Gamma \left(\frac{d-j}{2}\right)},
\end{equation}
$\lambda_d^{p+1}(\dd\mathbbm{x}) = \prod_{j=0}^p \lambda_d(\dd \vect{x}_j)$ and $\lambda_q^{p+1}(\dd\mathbbm{x}) = \prod_{j=0}^p \lambda_q(\dd \vect{x}_j)$ are the Lebesgue measures on $(\mathbb{R}^d)^{p+1}$ and $\bm{\sigma}^{p+1}$, respectively.
\end{theorem}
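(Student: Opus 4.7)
The plan is to prove the formula by a change of variables on $(\mathbb{R}^d)^{p+1}$ that reparametrizes the tuple $\mathbbm{x}=(\vect{x}_0,\ldots,\vect{x}_p)$ by first recording a $q$-dimensional affine subspace $\bm{\sigma}\in\mathbb{A}(d,q)$ containing the points, and then recording their intrinsic coordinates inside $\bm{\sigma}$. The factor $\Delta_p^{d-q}$ emerges as (a power of) the Jacobian of this reparametrization, while the constant $\beta_{dqp}$ absorbs the surface-area normalisations of auxiliary Stiefel manifolds and sub-Grassmannians that are integrated out along the way.

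First I would establish the critical case $q=p$, in which $\bm{\tau}=\mathcal{A}(\mathbbm{x})\in\mathbb{A}(d,p)$ is almost surely uniquely determined by the points themselves. Writing $\vect{x}_j=\vect{x}_0+A\vect{y}_j$ for $j=1,\ldots,p$, with $A\in\mathbb{R}^{d\times p}$ having orthonormal columns spanning the direction of $\bm{\tau}$ and $\vect{y}_j\in\mathbb{R}^p$ the intrinsic coordinates, a QR-type computation identifies the Jacobian of the map $(\vect{x}_0,A,\vect{y}_1,\ldots,\vect{y}_p)\mapsto\mathbbm{x}$ as the Gram-determinant factor $\det(M^\top M)^{(d-p)/2}=(p!\,\Delta_p)^{d-p}$, where $M=[\vect{y}_1\mid\cdots\mid\vect{y}_p]$. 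Integrating $A$ against the Haar measure on the Stiefel manifold $V_{d,p}$ of orthonormal $p$-frames in $\mathbb{R}^d$, whose total volume $\prod_{j=0}^{p-1}\omega_{d-j}$ telescopes into Gamma functions, assembles $\mu_p(\dd\bm{\tau})$ together with the constant $\beta_{dpp}$.

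To lift to general $p\leq q\leq d$, I would reapply the base case \emph{inside} a $q$-plane $\bm{\sigma}$, with the role of the ambient dimension $d$ played by $q$. This identifies $\int_{\bm{\sigma}^{p+1}} f(\mathbbm{x})\Delta_p^{d-q}\lambda_q^{p+1}(\dd\mathbbm{x})$ with $\beta_{qpp}\int_{\mathbb{A}_{\bm{\sigma}}(q,p)}\int_{\bm{\tau}^{p+1}} f(\mathbbm{x})\Delta_p^{d-p}\lambda_p^{p+1}(\dd\mathbbm{x})\,\mu_p^{\bm{\sigma}}(\dd\bm{\tau})$, where $\mu_p^{\bm{\sigma}}$ is the induced Haar measure on $p$-planes inside $\bm{\sigma}$. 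Integrating against $\mu_q(\dd\bm{\sigma})$ and invoking the Grassmannian disintegration $\int_{\mathbb{A}(d,q)}\mu_p^{\bm{\sigma}}(\dd\bm{\tau})\,\mu_q(\dd\bm{\sigma})=\alpha\,\mu_p(\dd\bm{\tau})$ (a consequence of Haar uniqueness, with $\alpha$ the finite measure of the compact sub-Grassmannian of $q$-planes through a fixed $p$-plane) collapses the nested Grassmannian integral into a single integral over $\mathbb{A}(d,p)$. Combining with the base case applied to the original integral $\int_{(\mathbb{R}^d)^{p+1}} f\,\lambda_d^{p+1}(\dd\mathbbm{x})$ yields the formula with overall constant $\beta_{dqp}=\beta_{dpp}/(\alpha\beta_{qpp})$, which reduces by cancellation of Gamma-function factors to the stated expression.

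The main obstacle will be the Jacobian computation in the base case: this is where $\Delta_p^{d-p}$ genuinely originates, and one must carefully track how the $d(p+1)$ Lebesgue coordinates on the left decompose across $\vect{x}_0\in\mathbb{R}^d$, the Stiefel frame $A\in V_{d,p}$, and the intrinsic coordinates $\vect{y}_j\in\mathbb{R}^p$, in particular correctly accounting for the sphere surface areas $\omega_d,\omega_{d-1},\ldots,\omega_{d-p+1}$ that make up $V_{d,p}$. Once this is settled, the passage to arbitrary $q$ is essentially measure-theoretic bookkeeping.
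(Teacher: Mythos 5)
The paper itself offers no argument for this theorem: its entire ``proof'' is a pointer to Rubin's elementary proof, so any real derivation you supply is by construction a different route. What you sketch is the classical proof of the affine Blaschke--Petkantschin formula (in the spirit of Miles and of Schneider--Weil): the base case $q=p$ via a Stiefel-frame change of variables whose Jacobian is the Gram factor $\det(M^\top M)^{(d-p)/2}=(p!\,\Delta_p)^{d-p}$, followed by iteration inside a $q$-plane and a Grassmannian disintegration. The structure is sound and has the merit of making the provenance of both $\Delta_p^{d-q}$ and $\beta_{dqp}$ visible, at the price of frame-decomposition bookkeeping the paper sidesteps entirely. Two points need care if you write it out. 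First, your parametrisation $(\vect{x}_0,A,\vect{y}_1,\ldots,\vect{y}_p)$ has $p(p-1)/2$ excess dimensions: the group $O(p)$ acts by rotating the frame $A$ within the plane while compensating in the $\vect{y}_j$, leaving $\mathbbm{x}$ fixed. This is precisely why the constant involves the \emph{ratio} $\prod_{j=0}^{p-1}\omega_{d-j}/\omega_{p-j}$ of Stiefel volumes (cf.\ the paper's Remark on $\beta_{dqp}$) and not the bare volume $\prod_{j=0}^{p-1}\omega_{d-j}$ you quote; you must quotient by $V_{p,p}$ or work on the Grassmannian from the start. Second, your final constant $\beta_{dqp}=\beta_{dpp}/(\alpha\,\beta_{qpp})$ agrees with the stated one only if $\alpha=1$, which holds exactly because the paper normalises $\nu_p$ to be a probability measure on each linear Grassmannian; this normalisation should be verified in the disintegration step rather than absorbed silently into ``cancellation of Gamma factors.''
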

\begin{proof}
    See Rubin \cite{rubin2018blaschke} for an elementary proof.
\end{proof}
\begin{remark}\label{Rem:BetaOmega}
Denote $\gamma_d = \int_0^\infty r^{d-1} e^{-r^2/2} \ddd r = 2^{\frac{d}{2}-1}\Gamma(\tfrac{d}{2})$ as before. We have $\omega_d\gamma_d = \sqrt{2\pi}^d$. We can express $\beta_{dqp}$ in terms of $\gamma$'s and $\omega$'s as follows:
\begin{equation}
    \beta_{dqp} =(p!)^{d-q} \sqrt{2\pi}^{p (d-q)} \prod_{j=0}^{p-1} \frac{\gamma_{q-j}}{\gamma_{d-j}} = (p!)^{d-q} \prod_{j=0}^{p-1} \frac{\omega_{d-j}}{\omega_{q-j}}.
\end{equation}
\end{remark}

The statement of the Blashke-Petkantschin formula is way too general for our purposes. We will only need its special cases. First, often we assume that the affine plane on which the points lie is exactly their affine hull almost surely. This corresponds to the case $p=q$, for which we define $\beta_{dp} = \beta_{dpp}$. Another special case is got by restricting the domain of integration using the following choice of $f$: Let $K_d \subset \mathbb{R}^d$ be a compact convex body with $\dim K_d = d$ and let $f(\mathbbm{x}) = \tildee{f}(\mathbbm{x}) \prod_{0\leq i \leq k} \mathbbm{1}_{K_d}(\vect{x}_i)$ for some $\tildee{f}: K_d^{p+1} \to \mathbb{R}$ suitably integrable, then
\begin{equation}\label{Eq:BPres}
    \int_{K_d^{p+1}} \tildee{f}(\mathbbm{x}) \lambda_d^{p+1}(\dd\mathbbm{x}) = \beta_{dqp}\int_{\mathbb{A}_{K_d}(d,q)}\int_{(K_d \cap \, \bm{\sigma})^{p+1}} \tildee{f}(\mathbbm{x}) \Delta_p^{d-q} \lambda_q^{p+1}(\dd\mathbbm{x}) \mu_q(\dd \bm{\sigma}).
\end{equation}
In this paper, mostly we use the special case with $\tildee{f}(\mathbbm{x}) = g(\mathcal{A}(\mathbbm{x})) \Delta_p^k$, where $g(\cdot)$ is a function of the cutting plane $\bm{\sigma} = \mathcal{A}(\mathbbm{x}) \in \mathbb{A}(d,q)$ only. In this case, the Blaschke-Petkantschin formula restricted on $K_d$ as in Equation \eqref{Eq:BPres} becomes, using definition of $v_p^{(n)}(\cdot)$ and denoting $\bm{\sigma}_{\! K_d} = K_d \cap \bm{\sigma}$ ($\dim \bm{\sigma}_{\!K_d} = q$ almost surely),
\begin{equation}\label{Eq:BPresAlter}
    \int_{K_d^{p+1}} g(\bm{\sigma}) \Delta_p^k\lambda_d^{p+1}(\dd\mathbbm{x}) = \beta_{dqp}\int_{\mathbb{A}_{K_d}(d,q)} v_p^{(d-q+k)}(\bm{\sigma}_{\! K_d}) (\vol_q \bm{\sigma}_{\!K_d})^{1+(d+k)\frac{p}{q}} g(\bm{\sigma}) \mu_q(\dd \bm{\sigma}).
\end{equation}
This still very general relation can be further reformulated in terms of expected values. Let us select the collection $\mathbb{X}=(\vect{X}_0,\vect{X}_1,\ldots,\vect{X}_p)$ of $(p+1)$ random points $\vect{X}_i$ independently from the same distribution $\mathsf{Unif}(K_d)$. Then
\begin{corollary}
\label{Cor:BP} With respect to the uniform probability measure $\mathsf{Unif}(K_d)$,
\begin{equation}
    \expe{g(\bm{\sigma}) \Delta_p^k} = \frac{\beta_{dqp}}{(\vol_d K_d)^{p+1}}\int_{\mathbb{A}_{K_d}(d,q)} v_p^{(d-q+k)}(\bm{\sigma}_{\! K_d}) (\vol_q \bm{\sigma}_{\!K_d})^{1+(d+k)\frac{p}{q}} g(\bm{\sigma}) \mu_q(\dd \bm{\sigma}).
\end{equation}
\end{corollary}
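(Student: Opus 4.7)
The plan is to derive Corollary \ref{Cor:BP} as a direct probabilistic reformulation of Equation \eqref{Eq:BPresAlter}, which has already been established via the restricted Blaschke-Petkantschin formula.

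First I would unfold the expectation on the left-hand side as an ordinary Lebesgue integral. Since the components of $\mathbb{X}=(\vect{X}_0,\ldots,\vect{X}_p)$ are i.i.d.\ with law $\mathsf{Unif}(K_d)$, the joint density on $K_d^{p+1}$ is the constant $(\vol_d K_d)^{-(p+1)}$, hence
\begin{equation}
    \expe{g(\bm{\sigma})\Delta_p^k} \;=\; \frac{1}{(\vol_d K_d)^{p+1}} \int_{K_d^{p+1}} g(\mathcal{A}(\mathbbm{x}))\, \Delta_p^k\, \lambda_d^{p+1}(\dd\mathbbm{x}),
\end{equation}
under the convention $\bm{\sigma}=\mathcal{A}(\mathbbm{x})$, which almost surely defines an element of $\mathbb{A}(d,q)$ in the generic case $p=q$.

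The main step is then to feed this integral into Equation \eqref{Eq:BPresAlter} with the choice $\tildee{f}(\mathbbm{x}) = g(\mathcal{A}(\mathbbm{x}))\,\Delta_p^k$, which converts the Lebesgue integral over $K_d^{p+1}$ into an iterated integral: first over the section $\bm{\sigma}_{\!K_d}$, whose contribution collapses into the metric moment $v_p^{(d-q+k)}(\bm{\sigma}_{\!K_d})$ (combining the BP Jacobian $\Delta_p^{d-q}$ with $\Delta_p^k$ and with the normalisation built into the definition of $v_p^{(\cdot)}$), multiplied by the volume factor $(\vol_q \bm{\sigma}_{\!K_d})^{1+(d+k)p/q}$, and then over $\mathbb{A}_{K_d}(d,q)$ against $\mu_q$. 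Dividing through by $(\vol_d K_d)^{p+1}$ produces the claimed identity.

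I do not anticipate any genuine obstacle: the corollary is a one-line consequence once \eqref{Eq:BPresAlter} is in hand, with the only bookkeeping being that the normalisation exponent $1+(d+k)p/q$ is exactly the one recorded in \eqref{Eq:BPresAlter} after merging the $(\vol_q \bm{\sigma}_{\!K_d})^{p+1}$ coming from the inner uniform-measure normalisation with the factor $(\vol_q \bm{\sigma}_{\!K_d})^{d-q+k}$ from the definition of the metric moment.
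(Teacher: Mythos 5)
Your argument is correct and is exactly the paper's (implicit) route: the corollary is nothing more than Equation \eqref{Eq:BPresAlter} with $\tildee{f}(\mathbbm{x})=g(\mathcal{A}(\mathbbm{x}))\,\Delta_p^k$, divided by $(\vol_d K_d)^{p+1}$ after writing the expectation as a normalised Lebesgue integral over $K_d^{p+1}$. One bookkeeping remark on your final sentence: for $p<q$ the affinely invariant normalisation hidden in $v_p^{(d-q+k)}(\bm{\sigma}_{\!K_d})$ contributes $(\vol_q \bm{\sigma}_{\!K_d})^{(d-q+k)p/q}$ rather than $(\vol_q \bm{\sigma}_{\!K_d})^{d-q+k}$, and only the former combines with $(\vol_q \bm{\sigma}_{\!K_d})^{p+1}$ to give the stated exponent $1+(d+k)\tfrac{p}{q}$ (your version of the count agrees only in the case $p=q$), though this does not affect the validity of the deduction since the exponent is inherited verbatim from \eqref{Eq:BPresAlter}.
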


If moreover $q=d-1$, for which, by using Remark \ref{Rem:BetaOmega},
\begin{equation}
 \beta_{d(d-1)p} = p! \prod_{j=0}^{p-1} \frac{\omega_{d-j}}{\omega_{d-1-j}} = \frac{p! \omega_d}{\omega_{d-p}},
\end{equation}
we may use the Cartesian parametrisation $\vect{x}\in \bm{\sigma} \Leftrightarrow \bm{\eta}^\top \vect{x} = 1$, to get
\begin{equation}\label{BP:genspec}
    \expe{g(\bm{\sigma}) \Delta_p^k} = \frac{2\, p!}{\omega_{d-p}(\vol_d K_d)^{p+1}}\int_{\mathbb{R}^d\setminus K_d^\circ} v_{p}^{(k+1)}(\bm{\sigma}_{\! K_d}) (\vol_{d-1}\bm{\sigma}_{\! K_d})^{1+\frac{(d+k)p}{d-1}} g(\bm{\sigma}) \frac{\lambda_d(\dd \bm{\eta})}{\|\bm{\eta}\|^{1+d}},
\end{equation}
where $\bm{\sigma}$ is now a function of $\bm{\eta}$. If moreover $p=q=d-1$, we get
\begin{equation}\label{BP:genpqdm1}
    \expe{g(\bm{\sigma}) \Delta_{d-1}^k} = \frac{(d-1)!}{(\vol_d K_d)^d}\int_{\mathbb{R}^d\setminus K_d^\circ} v_{d-1}^{(k+1)}(\bm{\sigma}_{\! K_d}) (\vol_{d-1}\bm{\sigma}_{\! K_d})^{d+k+1} g(\bm{\sigma}) \frac{\lambda_d(\dd \bm{\eta})}{\|\bm{\eta}\|^{1+d}}.
\end{equation}
We may write this relation in the form of the following corollary
\begin{corollary}\label{BP:zeta}
With respect to the uniform probability measure $\mathsf{Unif}(K_d)$,
\begin{equation*}
\expe{g(\bm{\sigma}) \Delta_{d-1}^k} = (d-1)!(\vol_d K_d)^{k+1} \int_{\mathbb{R}^d\setminus K_d^\circ} v_{d-1}^{(k+1)}(\bm{\sigma}_{\! K_d}) \zeta_d^{d+k+1}(\bm{\sigma}) g(\bm{\sigma}) \|\bm{\eta}\|^k \lambda_d(\dd \bm{\eta})
\end{equation*}
where we defined
\begin{equation}
    \zeta_d(\bm{\sigma}) = \frac{\vol_{d-1}(\bm{\sigma}\cap K_d)}{\|\bm{\eta}\|\vol_d K_d}
\end{equation}
\end{corollary}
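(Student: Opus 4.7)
The claim is essentially a cosmetic rewriting of Equation \eqref{BP:genpqdm1}, collecting the various powers of $\vol_{d-1}\bm{\sigma}_{\!K_d}$, $\|\bm{\eta}\|$ and $\vol_d K_d$ into the single dimensionless quantity $\zeta_d(\bm{\sigma})$. So my plan is simply to start from Equation \eqref{BP:genpqdm1} (which was already established as the $p=q=d-1$ specialisation of the Blaschke--Petkantschin formula in Cartesian parametrisation) and perform the substitution.

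Concretely, the defining relation
\begin{equation*}
\zeta_d(\bm{\sigma}) = \frac{\vol_{d-1}(\bm{\sigma}\cap K_d)}{\|\bm{\eta}\|\vol_d K_d}
\end{equation*}
may be rewritten as $\vol_{d-1}\bm{\sigma}_{\!K_d} = \zeta_d(\bm{\sigma})\,\|\bm{\eta}\|\,\vol_d K_d$. Raising both sides to the $(d+k+1)$-st power yields
\begin{equation*}
(\vol_{d-1}\bm{\sigma}_{\!K_d})^{d+k+1} = \zeta_d^{d+k+1}(\bm{\sigma})\,\|\bm{\eta}\|^{d+k+1}\,(\vol_d K_d)^{d+k+1}.
\end{equation*}
Inserting this into the right-hand side of \eqref{BP:genpqdm1} and combining the $\|\bm{\eta}\|^{d+k+1}$ factor with the $\|\bm{\eta}\|^{-(1+d)}$ appearing in the Haar measure leaves a clean $\|\bm{\eta}\|^k$; likewise, $(\vol_d K_d)^{d+k+1}/(\vol_d K_d)^d = (\vol_d K_d)^{k+1}$ consolidates the volume factors.

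Putting these two simplifications together reproduces exactly the displayed formula of the corollary, with the prefactor $(d-1)!(\vol_d K_d)^{k+1}$, the integrand $v_{d-1}^{(k+1)}(\bm{\sigma}_{\!K_d})\,\zeta_d^{d+k+1}(\bm{\sigma})\,g(\bm{\sigma})\,\|\bm{\eta}\|^k$, and the domain $\mathbb{R}^d\setminus K_d^\circ$ inherited unchanged from \eqref{BP:genpqdm1}. There is no real obstacle here: all analytic content (the Blaschke--Petkantschin decomposition, the choice of Cartesian parametrisation for hyperplanes and the corresponding Jacobian $2/(\omega_d \|\bm{\eta}\|^{1+d})$) has already been absorbed in Equation \eqref{BP:genpqdm1}. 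The corollary is a definitional repackaging whose sole purpose is to expose the affinely invariant ratio $\zeta_d(\bm{\sigma})$, which will be the natural variable in the Canonical Section Integral of Theorem \ref{Thm:Canon} (obtained by further specialising to $g\equiv 1$ and dividing by $d^k$ after relating $\Delta_{d-1}^k$ to the $k$-th volumetric moment).
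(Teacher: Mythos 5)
Your proposal is correct and coincides with the paper's own (implicit) derivation: the corollary is obtained from Equation \eqref{BP:genpqdm1} by substituting $\vol_{d-1}\bm{\sigma}_{\!K_d}=\zeta_d(\bm{\sigma})\,\|\bm{\eta}\|\,\vol_d K_d$ and cancelling powers, exactly as you do. The exponent bookkeeping ($\|\bm{\eta}\|^{d+k+1-(1+d)}=\|\bm{\eta}\|^k$ and $(\vol_d K_d)^{d+k+1-d}=(\vol_d K_d)^{k+1}$) checks out.
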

This form of the Blashke-Petkantschin formula is illustrated in Figure \ref{fig:CorBP} below.
\begin{figure}[H]
    \centering     \includegraphics[width=0.35\textwidth]{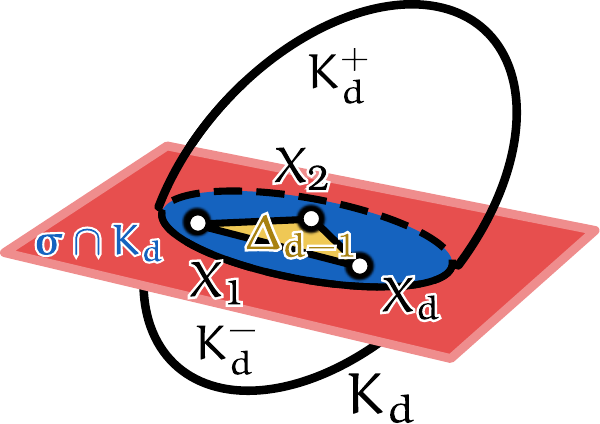}
    \caption{Blashke-Petkantschin formula replaces integration over space by integration over sections planes diving $K_d$ into $K_d^- \sqcup K_d^+$}
    \label{fig:CorBP}
\end{figure}

\begin{remark}\label{Rem:HomZeta}
We show that $\| \bm{\eta} \|$ always cancels out in $\zeta_d(\bm{\sigma})$. First, note that $\bm{\sigma}$ always separates $K_d$ into disjoint union $K_d^+ \sqcup K_d^-$, where
\begin{equation}\label{Eq:Kplusminus}
    K_d^+ = \{\vect x \in K_d \, | \, \bm{\eta}^\top \vect{x} < 1\}, \qquad K_d^- = \{\vect x \in K_d \, | \, \bm{\eta}^\top \vect{x} > 1\}.
\end{equation}
From homogeneity of $d$-volume, $\frac{\vol_d(\bm{\sigma}\cap K_d)}{\| \bm{\eta} \|} = -\sum_{j=1}^d \eta_j\frac{\partial\vol_d K_d^+}{\partial \eta_j}$ and thus
\begin{equation}\label{Eq:ZetaEtaGen}
\zeta_d(\bm{\sigma}) = - \frac{1}{\vol_d K_d} \sum_{j=1}^d \eta_j\frac{\partial\vol_d K_d^+}{\partial \eta_j} = \frac{1}{\vol_d K_d} \sum_{j=1}^d \eta_j\frac{\partial\vol_d K_d^-}{\partial \eta_j}.
\end{equation}
\end{remark}

\subsection{General volumetric moments}\label{sec:GVM}
In the following subsections, we will investigate the method of section integral. That is, instead of integrating over points, we integrate over section in the spirit of Blashke-Petkantschin formula. In fact, there are two approaches. The first is based on Efron's section formula. This was the original approach used by Buchta and Reitzner \cite{buchta1992expected} to deduce $v_3^{(1)}(T_3)$ and eventually \cite{buchta2001convex} also $v_n^{(1)}(T_3)$ for any integer $n\geq 3$, as well by Zinani \cite{zinani2003expected} who obtained $v_3^{(1)}(C_3)$. The second approach, which is the main approach used in this paper, is discussed in the next section and the rest of this chapter.

\subsubsection{Efron section integral}

\vspace{1em}
Let $K_d \subset \mathbb{R}^d$ be a convex body and let $\mathbb{X}' = (\vect{X}_1',\ldots,\vect{X}_d')$ be a collection of points $\vect{X}_j'$ drawn uniformly and independently from $K_d$. It is convenient to denote
\begin{equation}
\gamma_n(K_d) = \expe{\Gamma_{\! d}(\mathbb{X}')^{n-d+2} + (1-\Gamma_{\! d}(\mathbb{X}'))^{n-d+2}},
\end{equation}
where $\Gamma_{\! d}(\mathbb{X}') = \vol_d K_d^+/\vol_d K_d$ is the $d$-volume fraction of one of the two parts $K_d^+\sqcup K_d^-$ into which $K_d$ is divided by a hyperplane $\bm{\sigma} = \mathcal{A}(\mathbb{X}')$. Written as an integral, this is equivalent to

\begin{equation}
\gamma_n(K_d) =\frac{1}{(\vol_d K_d)^d}\int_{K_d^d}\Gamma_{\! d}(\mathbbm{x}')^{n-d+2} + (1-\Gamma_{\! d}(\mathbbm{x}'))^{n-d+2} \lambda_d^d(\dd\mathbbm{x}'),
\end{equation}
where $\mathbbm{x}' = (\vect{x}_1',\vect{x}_2',\ldots,\vect{x}_d')$ is the collection of points $\vect{x}_j' = (x_{1j}',\ldots,x_{dj}')^\top$, $j\in\{1,2,\ldots,d\}$ and $\lambda_d^d(\dd \mathbbm{x}') = \lambda_d(\dd\vect{x}_1') \lambda_d(\dd \vect{x}_2')\cdots \lambda_d(\dd\vect{x}_d')= \prod_{i,j=0}^d \dd x_{ij}'$ is the usual Lebesgue measure on $(\mathbb{R}^d)^d$.

\vspace{1em}
Note that $\gamma_n(K_d)$ is an affine functional. If $K_d$ is some sufficiently symmetric polytope $P_d$, we can further use genealogic decomposition
\begin{equation}
    \gamma_n(P_d) = \sum_{C \in \mathcal{C}(P_d)} w_C \,\gamma_n(P_d)_C.
\end{equation}
Efron's section formulae \cite{efron1965convex} then can be written in the following compact form
\begin{equation}
v_n^{(1)}(K_2) = 1 -  \frac{n+1}{2} \gamma_n(K_2), \qquad v_n^{(1)}(K_3) = \frac{n}{n+2} - \frac{n(n+1)}{12}\gamma_n(K_3).
\end{equation}
By Blaschke-Petkantschin formula (in the form of Corollary \ref{BP:zeta}) with $k=0$ and $g(\bm{\sigma}) = \Gamma_{\! d}(\bm{\sigma})^{n-d+2} + (1-\Gamma_{\! d}(\bm{\sigma}))^{n-d+2}$, we get
\begin{equation*}
\gamma_n(K_d) = (d-1)!\vol_d K_d \int_{\mathbb{R}^d\setminus K_d^\circ} v_{d-1}^{(1)}(\bm{\sigma}\cap K_d) \zeta_d^{d+1}(\bm{\sigma}) g(\bm{\sigma}) \lambda_d(\dd \bm{\eta}),
\end{equation*}
where $\bm{\eta}$ is the Cartesian representation of $\bm{\sigma}$ defined by the relation $\bm{\eta}^\top \vect{x} = 1$. In this representation, we have $K_d^+ = \{\vect x \in K_d \, | \, \bm{\eta}^\top \vect{x} < 1\}$
and (by Remark \ref{Rem:HomZeta})
\begin{equation}
\zeta_d(\bm{\sigma}) = \frac{\vol_{d-1}(\bm{\sigma}\cap K_d)}{\|\bm{\eta}\|\vol_d K_d} = - \frac{1}{\vol_d K_d} \sum_{j=1}^d \eta_j\frac{\partial\vol_d K_d^+}{\partial \eta_j} = -\sum_{j=1}^d \eta_j\frac{\partial\Gamma_{\! d}(K_d)}{\partial \eta_j}.
\end{equation}
The integral above can be always solved when the integrand is a rational function. This happens when $K_3 = P_3$ a convex polygon. Then, $\bm{\sigma}\cap P_3$ is some convex polytope $P_2$. Since $v_2^{(1)}(P_2)$ is known for any convex polytope (due to Buchta and Reitzner \cite{buchta1997equiaffine}), in fact it is a rational function, we can plug this value into the integral and then integrate everything out. We can use this formula to deduce the first volume moment relatively easily regardless of the number of points in the convex hull. This is the method that we originally used to derive $v_3^{(1)}(P_3)$ for polyhedra in Table \ref{tab:allsolids}.

\subsubsection{Canonical section integral}\label{sec:Canon}
As there is no analog of Efron's section formula for higher moments and dimensions, we might use the second section integral approach applicable to volumetric moments $v_d^{(k)}(K_d)$ for any $k$ (picking a $d$-simplex from a $d$-dimensional body $K_d$). Let us restate the main Theorem \ref{Thm:Canon} used in this work
\begin{proposition}
Let $K_d$ be a $d$-dimensional convex body, $\mathbbm{x}' = (\vect{x}_1,\ldots,\vect{x}_d)$ a collection of $d$ points in $K_d$ and $\bm{\sigma} = \mathcal{A}(\mathbbm{x}') \in \mathbb{A}(d,d-1)$ be a hyperplane parametrised by $\bm{\eta}=(\eta_1\ldots,\eta_d)^\top \in \mathbb{R}^d$ as $\vect{x}\in \bm{\sigma} \Leftrightarrow \bm{\eta}^\top \vect{x} = 1$, then
\begin{equation}
 v_d^{(k)}(K_d) = \frac{(d-1)!}{ d^k}\int_{\mathbb{R}^d\setminus K_d^\circ} v_{d-1}^{(k+1)}(\bm{\sigma}\cap K_d) \,\zeta_d^{d+k+1}(\bm{\sigma}) \iota^{(k)}_d(\bm{\sigma}) \lambda_d(\dd \bm{\eta})
\end{equation}
for any real $k>-1$, where
\begin{equation}
    \zeta_d(\bm{\sigma}) = \frac{\vol_{d-1}(\bm{\sigma}\cap K_d)}{\|\bm{\eta}\|\vol_d K_d}, \qquad \iota^{(k)}_d(\bm{\sigma}) = \int_{K_d} |\bm{\eta}^\top \vect{x}-1|^k \lambda_d(\dd\vect{x}).
\end{equation}
\end{proposition}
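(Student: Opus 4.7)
The plan is to exploit a base-height decomposition of the random $d$-simplex, integrate out the apex in closed form to produce the factor $\iota^{(k)}_d(\bm\sigma)$, and then apply the special case of the Blaschke-Petkantschin formula already recorded in Corollary \ref{BP:zeta} to the remaining $d$ base points.

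Concretely, write the collection as $\mathbbm x=(\vect x_0,\vect x_1,\ldots,\vect x_d)$, singling out $\vect x_0$ as the apex and letting $\mathbbm x'=(\vect x_1,\ldots,\vect x_d)$ span the base. Almost surely $\mathbbm x'$ determines a unique hyperplane $\bm\sigma=\mathcal A(\mathbbm x')\in\mathbb A(d,d-1)$, parametrised by $\bm\eta$ via $\bm\eta^\top\vect x=1$. The base-height formula for the volume of a simplex gives
\begin{equation*}
\Delta_d \;=\; \frac{1}{d}\,\Delta_{d-1}'\cdot \mathrm{dist}(\vect x_0,\bm\sigma) \;=\; \frac{1}{d}\,\Delta_{d-1}'\cdot\frac{|\bm\eta^\top\vect x_0-1|}{\|\bm\eta\|},
\end{equation*}
where $\Delta_{d-1}'=\vol_{d-1}\hull(\mathbbm x')$, since the orthogonal distance from a point $\vect x_0$ to the hyperplane $\{\bm\eta^\top\vect x=1\}$ is exactly $|\bm\eta^\top\vect x_0-1|/\|\bm\eta\|$. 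Raising to the $k$-th power decouples the apex from the base cleanly.

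Starting from the definition of $v_d^{(k)}(K_d)$ as $\Ex\Delta_d^k/(\vol_d K_d)^k$ with $\mathbb X$ uniform on $K_d^{d+1}$, I would integrate the apex $\vect x_0$ first, holding $\mathbbm x'$ fixed. By the base-height identity,
\begin{equation*}
\int_{K_d}\Delta_d^k\,\lambda_d(\dd\vect x_0) \;=\; \frac{(\Delta_{d-1}')^k}{d^k\,\|\bm\eta\|^k}\int_{K_d}|\bm\eta^\top\vect x_0-1|^k\,\lambda_d(\dd\vect x_0) \;=\; \frac{(\Delta_{d-1}')^k}{d^k\,\|\bm\eta\|^k}\,\iota^{(k)}_d(\bm\sigma),
\end{equation*}
which isolates $\iota^{(k)}_d(\bm\sigma)$ as exactly the integral appearing in the statement. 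The remaining $d$-fold integral then takes the form $\Ex[g(\bm\sigma)\Delta_{d-1}'^{\,k}]$ with the plane-functional $g(\bm\sigma)=\iota^{(k)}_d(\bm\sigma)/(d^k\|\bm\eta\|^k\vol_d K_d)$, where the extra $1/\vol_d K_d$ comes from normalising the apex integral.

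At this point I would invoke Corollary \ref{BP:zeta} directly (the $p=q=d-1$ Cartesian-parametrised Blaschke-Petkantschin), which rewrites $\Ex[g(\bm\sigma)\Delta_{d-1}^k]$ as an integral over $\bm\eta\in\mathbb R^d\setminus K_d^\circ$ weighted by $v_{d-1}^{(k+1)}(\bm\sigma\cap K_d)\,\zeta_d^{d+k+1}(\bm\sigma)\,\|\bm\eta\|^k$. The $\|\bm\eta\|^k$ from the corollary cancels exactly against the $\|\bm\eta\|^{-k}$ in $g(\bm\sigma)$, and the prefactor $(d-1)!(\vol_d K_d)^{k+1}$ combines with the $1/(\vol_d K_d)^k$ in the definition of $v_d^{(k)}$ and the $1/\vol_d K_d$ from $g$ to leave only $(d-1)!/d^k$. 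The hypothesis $k>-1$ is needed precisely so that the apex integral $\iota^{(k)}_d(\bm\sigma)$ converges near the hyperplane. The main technical care is bookkeeping: checking that the power of $\vol_d K_d$ balances (there are three separate sources: the uniform normalisation of the $d+1$ points, the normalisation in $v_d^{(k)}$, and the factor $(\vol_d K_d)^{k+1}$ produced by Corollary \ref{BP:zeta}), and confirming that the absolute value in $|\bm\eta^\top\vect x_0-1|^k$ correctly accounts for apexes on either side of $\bm\sigma$, which it does because the corollary integrates over all $\bm\eta\in\mathbb R^d\setminus K_d^\circ$ without restriction on which half-space contains the apex.
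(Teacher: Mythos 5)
Your proposal is correct and follows essentially the same route as the paper: base-height splitting of $\Delta_d$, integrating (conditioning) out the apex to isolate $\iota_d^{(k)}(\bm\sigma)$ as the $k$-th distance moment times $\|\bm\eta\|^{-k}(\vol_d K_d)^{-1}$, and then applying Corollary \ref{BP:zeta} with that plane-functional as $g(\bm\sigma)$. The cancellation of $\|\bm\eta\|^k$ and the balancing of the powers of $\vol_d K_d$ work out exactly as you describe, so nothing is missing.
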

\begin{proof}[Proof of Theorem \ref{Thm:Canon}]
Let $\mathbb{X} = (\vect{X}_0,\ldots,\vect{X}_n)$ be a collection of random $n+1$ i.i.d. points taken uniformly from $K_d$, let $\solH_n = \hull(\mathbb{X})$ be their convex hull and $\Delta_n = \vol_d \solH_n$, then we have in general ($n\geq d$)
\begin{equation}
    v_n^{(k)}(K_d) = \frac{\Exx [\Delta_n^k]}{(\vol_d K_d)^k}.
\end{equation}
When $n=d$, $\solH_d$ is almost surely a $d$-simplex. That means that any $d$-tuple of points $\vect{X}_i$ from $\mathbb{X}$ form a facet. Let $\mathbb{X}' = (\vect{X}_1,\ldots,\vect{X}_d)$, $\bm{\sigma} = \mathcal{A}(\mathbb{X}')$ as in the statement of the theorem and let $\operatorname{dist}_{\bm{\sigma}}(\vect{X}_0)$ be the distance from $\bm{\sigma}$ to the point $\vect{X}_0$, then by base-height splitting,
\begin{equation}
    \Delta_d = \frac{1}{d}  \operatorname{dist}_{\bm{\sigma}}(\vect{X}_0)\Delta_{d-1},
\end{equation}
where
$\Delta_{d-1} = \vol_{d-1}\hull(\mathbb{X}')$. See Figure \ref{fig:CSI} below.
\begin{figure}[H]
    \centering     \includegraphics[width=0.35\textwidth]{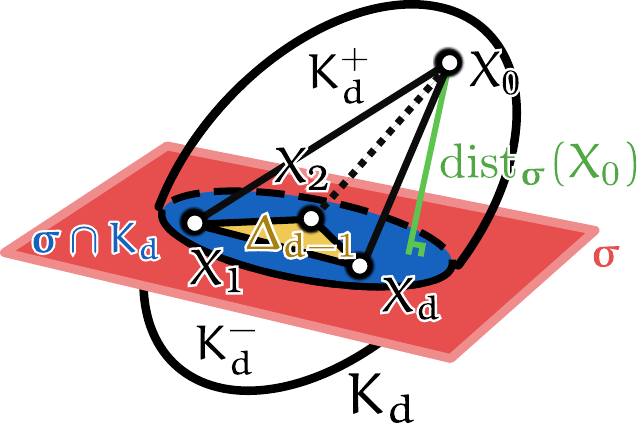}
    \caption{Base-height splitting}
    \label{fig:CSI}
\end{figure}

Fixing $\mathbb{X}'$, we get by conditioning,
\begin{equation}\label{Eq:CanonDec}
    v_d^{(k)}(K_d) = \frac{\Exx[ \Exx [\operatorname{dist}_{\bm{\sigma}}^k(\vect{X}_0) \mid \mathbb{X}'\,] \Delta_{d-1}^k ]}{d^k (\vol_d K_d)^k},
\end{equation}
where
\begin{equation}
    \Exx [\operatorname{dist}_{\bm{\sigma}}^k(\vect{X}_0) \mid \mathbb{X}'\,] = \frac{1}{\vol_d K_d} \int_{K_d} \operatorname{dist}_{\bm{\sigma}}^k(\vect{x}_0) \lambda_d(\dd\vect{x}_0)
\end{equation}
is the $k$-th distance moment from (fixed) $\bm{\sigma}$. If $\bm{\sigma}$ is parametrised Cartisianely, that means by $\bm{\eta} = (\eta_1,\ldots,\eta_d)^\top$ such that $\vect{x} \in \bm{\sigma}\Leftrightarrow\bm{\eta}^\top \vect{x} = 1$, we may write
\begin{equation}\label{Eq:DistEta}
\operatorname{dist}_{\bm{\sigma}}(\vect{x}_0) = |\bm{\eta}^\top \vect{x}_0-1|/\|\bm{\eta}\|
\end{equation}
and thus
\begin{equation}
    \Exx [\operatorname{dist}_{\bm{\sigma}}^k(\vect{X}_0) \mid \mathbb{X}'\,] = \frac{1}{\|\bm{\eta}\|^k\vol_d K_d} \int_{K_d} |\bm{\eta}^\top \vect{x}_0-1|^k \lambda_d(\dd\vect{x}_0).
\end{equation}
Note that since $\Exx [\operatorname{dist}_{\bm{\sigma}}^k(\vect{X}_0) \mid \mathbb{X}'\,]$ is only a function of $\bm{\sigma}$, we may use Blaschke-Petkantschin formula in Cartesian parametrisation (Corollary \ref{BP:zeta}), that is
\begin{equation*}
\expe{g(\bm{\sigma}) \Delta_{d-1}^k} = (d-1)!(\vol_d K)^{k+1} \int_{\mathbb{R}^d\setminus K_d^\circ} v_{d-1}^{(k+1)}(\bm{\sigma}_{\! K_d}) \zeta_d^{d+k+1}(\bm{\sigma}) g(\bm{\sigma}) \|\bm{\eta}\|^k \lambda_d(\dd \bm{\eta}),
\end{equation*}
where $\bm{\sigma}_{K_d} = \bm{\sigma}\cap K_d$. Selecting $g(\bm{\sigma}) = \Exx [\operatorname{dist}_{\bm{\sigma}}^k(\vect{X}_0) \mid \mathbb{X}'\,]$ and by definition of $\iota^{(k)}_d(\bm{\sigma})$, Equation \eqref{Eq:CanonDec} then becomes the desired assertion of the theorem.
\end{proof}
\begin{remark}
Note that from the theorem above we can obtain the limiting behaviour of $v_d^{(k)}(K_d)$ when $k \to (-1)^+$. Let's take a closer look on $\iota^{(k)}_d(\bm{\sigma})$ as in is the only singular term in $v_d^{(k)}(K_d)$. Let $y \in \bm{\sigma}$ be the closest point on $\bm{\sigma}$ to the origin, we have $h = \| \vect{y} \| = 1/\| \bm{\eta}\|$. Let $\vect{e}'_i$ form an orthonormal basis on $\bm{\sigma}$. Then, by Fubini's theorem with $\vect{u} = u_1 \vect{u}_1' + \ldots + u_{d-1} \vect{u}_{d-1}'$
\begin{equation}
    \iota^{(k)}_d(\bm{\sigma}) \!=\!\!  \!\int_{h-\varepsilon}^{h + \varepsilon}  \!\!\!\int_{\bm{\sigma} \cap K_d}  \!\!\!\!\!\| \bm{\eta} \|^k |x\!-\!h|^k \lambda_{d-1}(\dd\vect{u}) \dd x  \!+\!  O(1) \!=\! \frac{2\vol_{d-1} (\bm{\sigma}_{\! K_d})}{\|\bm{\eta}\|(k\!+\!1)} \!+\! O(1) \!=\! \frac{2\zeta_d(\bm{\sigma})}{(k\!+\!1)\vol_d K_d} \!+\! O(1)
\end{equation}
from which immediately
\begin{equation}
\lim_{k \to (-1)^+} (1+k)v_d^{(k)}(K_d) =  \frac{2\, d!}{\vol_d K_d}\int_{\mathbb{R}^d\setminus K_d^\circ} \zeta_d^{d+1}(\bm{\sigma}) \lambda_d(\dd \bm{\eta}).
\end{equation}
\end{remark}

\begin{remark}
Writing $\vect{x} = \vect{y} + t \hat{\bm{\eta}}+ \vect{u}$ and integrating over $\vect{u}$, we obtain an equivalent expression for $\iota_d^{(k)}(\bm{\sigma})$ in terms of geometric quantities, 
\end{remark}
\begin{equation}
    \iota_d^{(k)}(\bm{\sigma}) =\|\bm{\eta}\|^k \int_{-\infty}^\infty \vol_{d-1}((\bm{\sigma}+t\hat{\bm{\eta}})\cap K_d) |t|^k \ddd t
\end{equation}

\begin{remark}
By affine invariancy of volumetric moments and when $K_d = P_d$ is a polytope, we may take advantage of its symmetries to obtain
\begin{equation}
v_d^{(k)}(P_d) = \sum_{C \in \mathcal{C}(P_d)} w_C \, v_d^{(k)}(P_d)_C,
\end{equation}
where
\begin{equation}
 v_d^{(k)}(P_d)_C = \frac{(d-1)!}{ d^k}\int_{(\mathbb{R}^d\setminus P_d^\circ)_C} v_{d-1}^{(k+1)}(\bm{\sigma}\cap K_d) \,\zeta_d^{k+d+1}(\bm{\sigma}) \iota^{(k)}_d(\bm{\sigma}) \lambda_d(\dd \bm{\eta}).
\end{equation}
It may seem that finding the precise integration domains $(\mathbb{R}^d\setminus P_d^\circ)_C$ for various configurations is complicated. In fact, it is relatively easy. Recall that a configuration $C = P_d(S)$ is defined by the property of $\bm{\sigma}$ separating some given vertices from the set $S$ out of the set of all vertices $V$ of the polytope $P_d$.  The domain $(\mathbb{R}^d\setminus P_d^\circ)_C$ in $(\eta_1,\ldots,\eta_d)^\top$ is then the unique solution of the following inequalities
\begin{equation}\label{Eq:Separ}
\bm{\eta}^\top \vect{v} < 1 \text{ for all } \vect{v} \in S, \qquad \bm{\eta}^\top \vect{v} > 1\text{ for all } \vect{v} \in V\setminus S
\end{equation}
or inequalities with $<$, $>$ flipped (we then take the union of those two options). Note that $\bm{\sigma}$ always separates $P_d$ into disjoint union $P_d^+ \sqcup P_d^-$, where
\begin{equation}\label{Eq:Pplusminus}
    P_d^+ = \{\vect x \in P_d \, | \, \bm{\eta}^\top \vect{x} < 1\}, \qquad P_d^- = \{\vect x \in P_d \, | \, \bm{\eta}^\top \vect{x} > 1\},
\end{equation}
which means that the computation of $\iota_d^{(k)}(\bm{\sigma})$ is also straightforward as
\begin{equation}
\iota^{(k)}_d(\bm{\sigma}) = \int_{P_d^+}  (1-\bm{\eta}^\top \vect{x})^k \lambda_d(\dd\vect{x}) + \int_{P_d^-} (\bm{\eta}^\top \vect{x}-1)^k \lambda_d(\dd\vect{x})
\end{equation}
for any real $k>-1$. When $k$ is an integer, let us denote
\begin{equation}
\iota^{(k)}_d(\bm{\sigma})_\mathrm{N} = \int_{P_d}  (\bm{\eta}^\top \vect{x}-1)^k \lambda_d(\dd\vect{x}),
\end{equation}
then, when $k$ is even, we have $\iota^{(k)}_d(\bm{\sigma})= \iota^{(k)}_d(\bm{\sigma})_\mathrm{N}$. For any integer $k$, we get by inclusion/exclusion
\begin{equation}
\iota^{(k)}_d(\bm{\sigma}) \!=\! \iota^{(k)}_d(\bm{\sigma})_\mathrm{N} \!-\! (1\!-\!(-1)^k)\!\! \int_{P_d^+} \!\!\! (\bm{\eta}^\top \vect{x}\!-\!1)^k \lambda_d(\dd\vect{x})\! =\! (-1)^k \iota^{(k)}_d(\bm{\sigma})_\mathrm{N} \!+\! (1-(-1)^k) \!\!\int_{P_d^-}\!\!\!  (\bm{\eta}^\top \vect{x}\!-\!1)^k \lambda_d(\dd\vect{x}).
\end{equation}
Lastly, note that $\zeta_d(\bm{\sigma})$ is a rational function of $\bm{\eta}$. To see this, we know that $\vol_d P_d^+$ is a rational function in $(\eta_1,\ldots,\eta_d)^\top$. From homogeneity (Remark \ref{Rem:HomZeta}),
\begin{equation}\label{Eq:ZetaEta}
\zeta_d(\bm{\sigma}) = - \frac{1}{\vol_d P_d} \sum_{j=1}^d \eta_j\frac{\partial\vol_d P_d^+}{\partial \eta_j} = \frac{1}{\vol_d P_d} \sum_{j=1}^d \eta_j\frac{\partial\vol_d P_d^-}{\partial \eta_j}
\end{equation}
which is also rational since differentiation preserves rationality.
\end{remark}

\begin{remark}
Fundamental Lemma of Convex Geometry tells us that a polytope is described equivalently either by linear inequalities or as a convex hull of its vertices (H- and V- representation equivalence). Hence, for example by linear programming techniques, we can deduce the vertices of $P_d^+$ from the inequalities for $P_d^+$ and vice versa. The same applies for the polytope $\bm{\sigma}\cap P_d$ whose number of vertices is $n_C$ by definition.
\end{remark}

\begin{example}\label{Ex:v1k1}
Consider a trivial example of $v_1^{(k)}(T_1)$, that is the $k$-th moment of a random line length. Parametrising $\bm{\eta} = (a)^\top$, $a>1$, we get $\zeta_1(\bm{\sigma}) = 1/a$,
\begin{equation}
\iota_1^{(k)}(\bm{\sigma}) = \int_0^1 |ax-1|^k \dd x = \frac{(a-1)^{k+1}+1}{a (1+k)}
\end{equation}
and thus by Theorem \ref{Thm:Canon} with $\mathbb{R}^1\setminus T_1^\circ = (1,\infty)$ and $\lambda_1(\dd\bm{\eta}) = \dd a$,
\begin{equation}
v_1^{(k)}(T_1) = \int_1^\infty \frac{(a-1)^{k+1}+1}{a^{k+3} (k+1)} \ddd a = \frac{2}{(1+k)(2+k)}.
\end{equation}
\end{example}

\subsubsection{Triangle area moments}
As a toy model, we deduce the volumetric moments $v_2^{(k)}(T_2)$ from the canonical section integral formula (Theorem \ref{Thm:Canon}). We obtain values shown in Table \ref{Tab:v2kT2}.
\begin{table}[H]
    \centering
    \begin{tabular}{|c|c|c|c|c|c|c|c|c|c|c|}
        \hline
        $k$ & $0$ & $1$ & $2$ & $3$ & $4$ & $5$ & $6$ & $7$ & $8$ & $9$ \\
        \hline
        \ru{1.3}
        $v_2^{(k)}(T_2)$ & $1$ & $\frac{1}{12}$ & $\frac{1}{72}$ & $\frac{31}{9000}$ & $\frac{1}{900}$ &
    $\frac{1063}{2469600}$ &
    $\frac{403}{2116800}$ &
    $\frac{211}{2268000}$ &
    $\frac{13}{2646000}$ & $\frac{2593}{93915360}$ \\[0.5em]
        \hline
    \end{tabular}
    \caption{Volumetric moments $v_2^{(k)}(T_2)$ (triangle area moments)}
    \label{Tab:v2kT2}
\end{table}

First, from affine invariancy, $v_2^{(k)}(T_2)$ must be the same as $v_2^{(k)}(\mathbb{T}_2)$, where
\begin{equation}
\mathbb{T}_2=\hull(\vect{0},\vect{e}_1,\vect{e}_2) = \hull([0,0],[1,0],[0,1])
\end{equation}
is the canonical triangle. Trivialy, we have $\vol_2 \mathbb{T}_2 = 1/2!=1/2$. Let $\bm{\eta}=(a,b)^\top$ be the Cartesian parametrisation of the line $\bm{\sigma} \in \mathbb{A}(2,1)$ such that $\vect{x}\in \bm{\sigma} \Leftrightarrow \bm{\eta}^\top\vect{x} = 1$. We have $\|\bm{\eta}\| = \sqrt{a^2+b^2}$. Based on symmetries $\mathcal{G}(T_2)$, there is only one realisable configuration. Moreover, thanks to affine invariancy, we can consider the only configuration $\mathrm{I}$ in $\mathcal{C}(\mathbb{T}_2)$. Table \ref{tab:TriConfs} shows specifically which sets $S$ of vertices are separated by a cutting plane $\bm{\sigma}$. The corresponding configurations in $T_2$ are shown in Figure \ref{fig:T2Con}.

\begin{minipage}[b]{0.22\textwidth}
\begin{table}[H]
    \centering
\begin{tabular}{|c|c|}
\hline
 $C$ & $\mathrm{I}$\\
 \hline
 \ru{1.0}$S$ & $[0,0]$ \\[0.1em]
 \hline 
 $w_C$ & $3$\\
 \hline
\end{tabular}
\vspace{1.5em}
\caption{Configurations $\mathcal{C}(\mathbb{T}_2)$.}
    \label{tab:TriConfs}
\end{table}
\end{minipage}
\hfill
\begin{minipage}[b]{0.28\textwidth}
\begin{figure}[H]
    \centering     \includegraphics[width=0.65\textwidth]{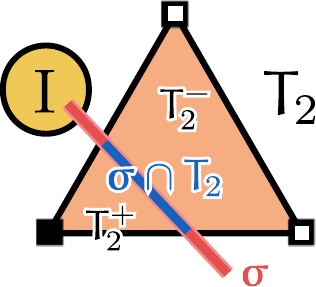}
    \vspace{-1em}
    \caption{Configurations $\mathcal{C}(T_2)$}
    \label{fig:T2Con}
\end{figure}
\end{minipage}
\hfill
\begin{minipage}[b]{0.28\textwidth}
\begin{figure}[H]
    \centering     \includegraphics[width=0.65\textwidth]{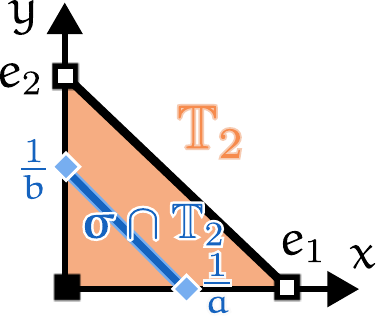}
    \caption{Configuration $\mathrm{I}$ in $\mathcal{C}(\mathbb{T}_2)$}
    \label{fig:bbT2ConI}
\end{figure}
\end{minipage}
\vspace{1em}

By Theorem \ref{Thm:Canon} and for any $C \in \mathcal{C}(\mathbb{T}_2)$,
\begin{equation}\label{Eq:SecIntArea}
 v_2^{(k)}(\mathbb{T}_2)_C = \frac{1}{ 2^k}\int_{(\mathbb{R}^2\setminus \mathbb{T}_2^\circ)_C} v_1^{(k+1)}(\bm{\sigma}\cap \mathbb{T}_2) \,\zeta_2^{k+3}(\bm{\sigma}) \iota^{(k)}_2(\bm{\sigma}) \lambda_2(\dd \bm{\eta}),
\end{equation}
where
\begin{equation}
    \zeta_2(\bm{\sigma}) = \frac{\vol_1(\bm{\sigma}\cap \mathbb{T}_2)}{\|\bm{\eta}\|\vol_2 \mathbb{T}_2}, \qquad \iota^{(k)}_2(\bm{\sigma}) = \int_{\mathbb{T}_2} |\bm{\eta}^\top \vect{x}-1|^k \lambda_2(\dd\vect{x}).
\end{equation}
To ensure $\bm{\sigma}$ separates only the point $[0,0]$ in Configuration $\mathrm{I}$, we must force the plane intersection coordinates $\frac{1}{a},\frac{1}{b}$ to lie in the interval $(0,1)$. Or, by Equation \eqref{Eq:Separ}, we get $a>1$ and $b>1$ directly. Any way, that means $(\mathbb{R}^2\setminus \mathbb{T}_2^\circ)_\mathrm{I} = (1,\infty)^2$ is our integration domain in $a,b$. See Figure \ref{fig:bbT2ConI}. Denote
\begin{equation}
    \mathbb{T}_2^{ab} = \hull([0,0],[1/a,0],[0,1/b]).
\end{equation}
The line $\bm{\sigma}$ splits $\mathbb{T}_2$ into disjoint union of two domains $\mathbb{T}_2^+\sqcup \mathbb{T}_2^-$, where the one closer to the origin is precisely $\mathbb{T}_2^+ = \mathbb{T}_2^{ab}$. Therefore,
\begin{equation}
\iota^{(k)}_2(\bm{\sigma}) = \int_{\mathbb{T}_2^{ab}}  (1-\bm{\eta}^\top \vect{x})^k \lambda_2(\dd\vect{x}) + \int_{\mathbb{T}_2\setminus\mathbb{T}_2^{ab}} (\bm{\eta}^\top \vect{x}-1)^k \lambda_2(\dd\vect{x}).
\end{equation}
This integral is easy to compute. In fact, for any real $k>-1$, we get
\begin{equation}\label{Eq:iota2k}
\iota^{(k)}_2(\bm{\sigma}) = \frac{b (a-1)^{k+2}-a (b-1)^{k+2}+a-b}{a b (a-b) (1+k) (2+k)}.
\end{equation}
Note that $\bm{\sigma} \cap \mathbb{T}_2 = \hull([1/a,0],[0,1/b])$ and thus
\begin{equation}
\vol_1(\bm{\sigma}\cap\mathbb{T}_2) = \frac{\sqrt{a^2+b^2}}{ab} = \frac{\|\bm{\eta}\|}{ab}
\end{equation}
and hence
\begin{equation}\label{Eq:ZetaTriangle}
    \zeta_2(\bm{\sigma}) = \frac{\vol_1(\bm{\sigma}\cap \mathbb{T}_2)}{\|\bm{\eta}\|\vol_2 \mathbb{T}_2} = \frac{2}{ab}.
\end{equation}
Moreover, by affine invariancy of volumetric moments and using Example \ref{Ex:v1k1},
\begin{equation}
v_1^{(k+1)}(\bm{\sigma}\cap \mathbb{T}_2) = v_1^{(k+1)}(T_1) = \frac{2}{(2+k)(3+k)}.
\end{equation}
By Equation \eqref{Eq:DecoCon} and by affine invariancy,
\begin{equation}
v_2^{(k)}(T_2) = \sum_{C \in \mathcal{C}(T_2)} w_C \, v_2^{(k)}(T_2)_C = 3 v_2^{(k)}(\mathbb{T}_2)_{\mathrm{I}},
\end{equation}
from which, we get by Equation \eqref{Eq:SecIntArea} for any real $k>-1$,
\begin{equation}
     v_2^{(k)}(T_2) = 48 \int_1^{\infty } \int_1^{\infty }\frac{b (a-1)^{2+k}-a (b-1)^{2+k}+a-b}{a^{k+4} b^{k+4}
   (a-b)(1+k) (2+k)^2 (3+k)}\ddd a \dd b.
\end{equation}
Let $a=1/x$ and $b=1/y$, then, after some simple manipulations,
\begin{equation}
     v_2^{(k)}(T_2) = \frac{48}{(1+k) (2+k)^2 (3+k)} \int_0^1 \int_0^1\frac{(1-x)^{2+k} \left(x^{2+k}-y^{2+k}\right)}{x-y}\ddd x  \ddd y
\end{equation}
for any real $k>-1$. This integral can be computed explicitly when $k$ is an integer. Dividing the numerator by $x-y$, we get
\begin{equation}
\begin{split}
     v_2^{(k)}(T_2) & = \frac{48}{(1+k) (2+k)^2 (3+k)} \sum_{j=0}^{k+1} \int_0^1 \int_0^1(1-x)^{2+k}x^j y^{k-j+1}\ddd x  \ddd y\\
     & = \frac{48}{(1+k) (2+k)^2 (3+k)} \sum_{j=0}^{k+1} \frac{1}{k-j+2}\int_0^1(1-x)^{2+k} x^j\ddd x,
\end{split}
\end{equation}
which is, of course, the Beta integral. Therefore, for any non-negative integer $k$,
\begin{equation}
v_2^{(k)}(T_2) = \frac{48 \, k!}{(2+k) (3+k)} \sum _{j=0}^{k+1} \frac{j!}{(k-j+2) (k+j+3)!}.
\end{equation}
This result is not new, in fact, it has been derived several times (although known in different forms), see Reed \cite{reed1974random}, Mathai \cite[p.~391]{mathai1999introduction} or Maesumi \cite{maesumi2019triangle}. 

\subsubsection{Square area moments}
As another example, we deduce the volumetric moments $v_2^{(k)}(C_2)$ from Theorem \ref{Thm:Canon}. We obtain values shown in Table \ref{Tab:v2kC2}.
\begin{table}[!tbh]
    \centering
    \begin{tabular}{|c|c|c|c|c|c|c|c|c|c|}
        \hline
        $k$ & $1$ & $2$ & $3$ & $4$ & $5$ & $6$ & $7$ & $8$ & $9$ \\
        \hline
        \ru{1.3}
        $v_2^{(k)}(C_2)$ & $\frac{11}{144}$ & $\frac{1}{96}$ & $\frac{137}{72000}$ & $\frac{1}{2400}$ & $\frac{363}{3512320}$ & $\frac{761}{27095040}$ & $\frac{7129}{870912000}$ & $\frac{61}{24192000}$ & $\frac{83711}{103038566400}$ \\[0.5em]
        \hline
    \end{tabular}
    \caption{Volumetric moments $v_2^{(k)}(C_2)$ (square area moments)}
    \label{Tab:v2kC2}
\end{table}

We may parametrise $C_2$ with $\vol_2 C_2 = 1$ as
\begin{equation}
C_2=\hull([0,0],[1,0],[0,1],[1,1]),
\end{equation}
Let $\bm{\eta}=(a,b)^\top$ be the Cartesian parametrisation of the line $\bm{\sigma} \in \mathbb{A}(2,1)$ such that $\vect{x}\in \bm{\sigma} \Leftrightarrow \bm{\eta}^\top\vect{x} = 1$. We have $\|\bm{\eta}\| = \sqrt{a^2+b^2}$. Based on symmetries $\mathcal{G}(C_2)$, there are two configurations. Table \ref{tab:SquareConfs} shows specifically which sets $S$ of vertices are separated by a cutting plane $\bm{\sigma}$ in which configurations in our local representation of $C_2$ above. Note that there is an ambiguity how to select those vertices as long it is the same configuration.
\begin{table}[htb]
    \centering
\begin{tabular}{|c|c|c|c|}
\hline
 $C$ & $\mathrm{I}$ & $\mathrm{II}$\\
 \hline
 $S$ & $[0,0]$ & \begin{tabular}{c} $[0,0]$ \\ $[0,1]$ \end{tabular} \\
 \hline 
 $w_C$ & $4$ & $2$\\
 \hline
\end{tabular}
    \caption{Configurations $\mathcal{C}(C_2)$ in a local representation.}
    \label{tab:SquareConfs}
\end{table}

By Theorem \ref{Thm:Canon} and for any $C \in \mathcal{C}(C_2)$,
\begin{equation}\label{Eq:SecIntAreaSq}
 v_2^{(k)}(C_2)_C = \frac{1}{ 2^k}\int_{(\mathbb{R}^2\setminus C_2^\circ)_C} v_1^{(k+1)}(\bm{\sigma}\cap C_2) \,\zeta_2^{k+3}(\bm{\sigma}) \iota^{(k)}_2(\bm{\sigma}) \lambda_2(\dd \bm{\eta}),
\end{equation}
where
\begin{equation}
    \zeta_2(\bm{\sigma}) = \frac{\vol_1(\bm{\sigma}\cap C_2)}{\|\bm{\eta}\|\vol_2 C_2}, \qquad \iota^{(k)}_2(\bm{\sigma}) = \int_{C_2} |\bm{\eta}^\top \vect{x}-1|^k \lambda_2(\dd\vect{x}).
\end{equation}

\subsubsection*{Configuration I}
By Equation \eqref{Eq:Separ}, we get the following set of inequalities which ensure $\bm{\sigma}$ separates only the point $[0,0]$,
\begin{equation}
   0 < 1, \qquad a > 1, \qquad b > 1, \qquad a + b > 1,
\end{equation}
hence, our $a,b$ integration domain is $(\mathbb{R}^2\setminus C_2^\circ)_\mathrm{I} = (1,\infty)^2$. Denote
\begin{equation}
    \mathbb{T}_2^{ab} = \hull([0,0],[1/a,0],[0,1/b]),
\end{equation}
then the line $\bm{\sigma}$ splits $C_2$ into disjoint union of two domains $C_2^+\sqcup C_2^-$, where the one closer to the origin is precisely $C_2^+ = \mathbb{T}_2^{ab}$. Therefore,
\begin{equation}
\iota^{(k)}_2(\bm{\sigma}) = \int_{\mathbb{T}_2^{ab}}  (1-\bm{\eta}^\top \vect{x})^k \lambda_2(\dd\vect{x}) + \int_{C_2\setminus\mathbb{T}_2^{ab}} (\bm{\eta}^\top \vect{x}-1)^k \lambda_2(\dd\vect{x}).
\end{equation}
This integral is easy to compute. In fact, for any real $k>-1$, we get
\begin{equation}
\iota^{(k)}_2(\bm{\sigma}) = \frac{(a+b-1)^{k+2}-(a-1)^{k+2}-(b-1)^{k+2}+1}{a b (k+1) (k+2)}.
\end{equation}
By Equation \eqref{Eq:ZetaTriangle} from the $P_2 = \mathbb{T}_2$ case,
\begin{equation}
    \zeta_2(\bm{\sigma}) = \frac{\vol_1(\bm{\sigma}\cap C_2)}{\|\bm{\eta}\|\vol_2 C_2} = \frac{1}{ab}
\end{equation}
and by affine invariancy, as $\bm{\sigma} \cap C_2$ is a line segment,
\begin{equation}
v_1^{(k+1)}(\bm{\sigma}\cap C_2) = v_1^{(k+1)}(T_1) = \frac{2}{(2+k)(3+k)}.
\end{equation}
from which, we get by Equation \eqref{Eq:SecIntAreaSq} for any real $k>-1$,
\begin{equation}
     v_2^{(k)}(C_2)_\mathrm{I} = 2^{1-k} \int_1^{\infty } \int_1^{\infty }\frac{(a+b-1)^{k+2}-(a-1)^{k+2}-(b-1)^{k+2}+1}{a^{k+4} b^{k+4}
   (1+k) (2+k)^2 (3+k)}\ddd a  \ddd b.
\end{equation}
Integrating out $b$ and substituting $a=1/x$ and after some simplifications, we get
\begin{equation}
v_2^{(k)}(C_2)_\mathrm{I} = \frac{2^{1-k}}{(1+k) (2+k)^2 (3+k)^2} \int_0^1 \frac{1-x^{2+k}}{1-x} \ddd x,
\end{equation}
for any real $k>-1$. When $k$ is an integer, we get
\begin{equation}
v_2^{(k)}(C_2)_\mathrm{I} = \frac{16 H_{k+2}}{(1+k) (2+k)^2 (3+k)^2},
\end{equation}
where $H_k = \sum_{j=1}^k 1/j$ is the $k$-th \emph{harmonic number}\index{harmonic number}.

\subsubsection*{Configuration II}
By Equation \eqref{Eq:Separ}, we get the following set of inequalities which ensure $\bm{\sigma}$ separates points $[0,0]$ and $[0,1]$,
\begin{equation}
   0 < 1, \qquad a > 1, \qquad b < 1, \qquad a + b > 1,
\end{equation}
however, by symmetry, we may additionally require $b>0$. In fact, both options $b>0$ and $b<0$ give the same factor since they correspond to two possibilities where $\bm{\sigma}$ hits $\mathcal{A}([0,0],[0,1])$. Therefore we only consider the following integration half-domain (indicated by $*$)
\begin{equation}
(\mathbb{R}^2\setminus C_2^\circ)^{*}_\mathrm{II} = (1,\infty)\times (0,1)
\end{equation}
and in the end multiply the result twice. The plane $\bm{\sigma}$ splits $C_3$ into disjoint union of two domains $C_3^+\sqcup C_3^-$, where the one closer to the origin can be described as
\begin{equation}
    C_2^+ = \hull\left(\left[0,0\right],\left[\frac{1}{a},0\right],\left[\frac{1-b}{a},1\right],\left[0,1\right]\right),
\end{equation}
from which, by elementary geometry $\vol_2 C_2^+ = (2-b)/(2a)$ and as a consequence of Equation \eqref{Eq:ZetaEta},
\begin{equation}
\zeta_2(\bm{\sigma}) = - a \frac{\partial}{\partial a}\left(\frac{2-b}{2a}\right) - b \frac{\partial}{\partial b}\left(\frac{2-b}{2a}\right) =  \frac{1}{a}.
\end{equation}
Next, again, the following integrals
\begin{equation}
\iota^{(k)}_2(\bm{\sigma}) = \int_{C_2^+}  (1-\bm{\eta}^\top \vect{x})^k \lambda_2(\dd\vect{x}) + \int_{C_2\setminus C_2^+} (\bm{\eta}^\top \vect{x}-1)^k \lambda_2(\dd\vect{x}).
\end{equation}
are easy to compute for any real $k>-1$, we get
\begin{equation}
\iota^{(k)}_2(\bm{\sigma}) = \frac{(a+b-1)^{k+2}-(a-1)^{k+2}-(1-b)^{k+2}+1}{a b (k+1) (k+2)}.
\end{equation}
and by affine invariancy, as $\bm{\sigma} \cap C_2$ is again a line segment,
\begin{equation}
v_1^{(k+1)}(\bm{\sigma}\cap C_2) = v_1^{(k+1)}(T_1) = \frac{2}{(2+k)(3+k)}.
\end{equation}
from which, we get by Equation \eqref{Eq:SecIntAreaSq} for any real $k>-1$ (counted twice!),
\begin{equation}
     v_2^{(k)}(C_2)_\mathrm{II} = \frac{4}{2^k} \int_0^{\infty } \int_1^{\infty }\frac{(a+b-1)^{k+2}-(a-1)^{k+2}-(1-b)^{k+2}+1}{a^{k+4} b
   (1+k) (2+k)^2 (3+k)}\ddd a  \ddd b.
\end{equation}
Integrating out $a$ and after some simplifications, we get
\begin{equation}
v_2^{(k)}(C_2)_\mathrm{II} = \frac{2^{3-k}}{(1+k) (2+k)^2 (3+k)^2} \int_0^1 \frac{1-b^{2+k}}{1-b} \ddd b,
\end{equation}
for any real $k>-1$. When $k$ is an integer, we get
\begin{equation}
v_2^{(k)}(C_2)_\mathrm{II} = \frac{2^{3-k} H_{k+2}}{(1+k) (2+k)^2 (3+k)^2}.
\end{equation}

\subsubsection*{Contribution from all configurations
}
By Equation \eqref{Eq:DecoCon},
\begin{equation}
v_2^{(k)}(C_2) = \sum_{C \in \mathcal{C}(C_2)} w_C \, v_2^{(k)}(C_2)_C = 4 v_2^{(k)}(C_2)_{\mathrm{I}} + 2 v_2^{(k)}(C_2)_{\mathrm{II}},
\end{equation}
which gives for any real $k>-1$,
\begin{equation}
     v_2^{(k)}(C_2)= \frac{24}{2^k (1+k) (2+k)^2 (3+k)^2} \int_0^1 \frac{1-x^{k+2}}{1-x} \ddd x.
\end{equation}
For $k$ being an integer, we get
\begin{equation}
v_2^{(k)}(C_2)= \frac{24 H_{k+2}}{2^k (1+k) (2+k)^2 (3+k)^2} = \frac{24 \sum_{j=1}^{k+2} \frac{1}{j}}{2^k (1+k) (2+k)^2 (3+k)^2}.
\end{equation}
This result is also not new, see Reed \cite{reed1974random} or Henze \cite{henze1983random}.

\newpage
\section{Three-dimensional polytopes}
\subsection{Tetrahedron odd volumetric moments}
Let us investigate how we can obtain the volumetric moments $v_3^{(k)}(T_3)$. First, since $v_3^{(k)}(T_3)$ is an affine invariant, then it must be the same as $v_3^{(k)}(\mathbb{T}_3)$, where
\begin{equation}
\mathbb{T}_3=\hull(\vect{0},\vect{e}_1,\vect{e}_2,\vect{e}_3) = \hull([0,0,0],[1,0,0],[0,1,0],[0,0,1])
\end{equation}
is the canonical tetrahedron. We have $\vol_3 \mathbb{T}_3 = 1/3!=1/6$. Let $\bm{\eta}=(a,b,c)^\top$ be the Cartesian parametrisation of $\bm{\sigma} \in \mathbb{A}(3,2)$ such that $\vect{x}\in \bm{\sigma} \Leftrightarrow \bm{\eta}^\top\vect{x} = 1$. We have $\|\bm{\eta}\| = \sqrt{a^2+b^2+c^2}$. Based on symmetries $\mathcal{G}(T_3)$, there are two realisable configurations we need to consider (see its genealogy in Figure \ref{fig:TETRAHEDRON_GENEALOGY}). Moreover, thanks to affine invariancy, we can consider instead the two $\mathcal{C}(\mathbb{T}_3)$ configurations (see Table \ref{tab:TetraConfs} below, Figure shows the correspoding configurations on the non-deformed $T_3$).

\begin{minipage}[b]{0.45\textwidth}
\begin{table}[H]
    \centering
\begin{tabular}{|c|c|c|}
\hline
 $C$ & $\mathrm{I}$ & $\mathrm{II}$\\
 \hline
 \ru{1.0}$S$ & $[0,0,0]$ & \begin{tabular}{c} $[0,0,0]$ \\ $[0,0,1]$ \end{tabular} \\[0.1em]
 \hline 
 $w_C$ & $4$ & $3$\\
 \hline
\end{tabular}
\vspace{0.8em}
\caption{Configurations $\mathcal{C}(\mathbb{T}_3)$.}
    \label{tab:TetraConfs}
\end{table}
\end{minipage}
\hfill
\begin{minipage}[b]{0.50\textwidth}
\begin{figure}[H]
    \centering     \includegraphics[width=0.45\textwidth]{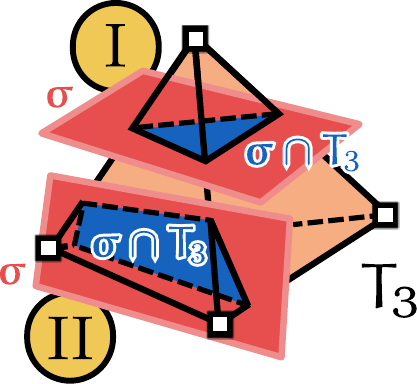}
    \vspace{-0.5em}
    \caption{Configurations $\mathcal{C}(T_3)$}
    \label{fig:T3Con}
\end{figure}
\end{minipage}
\vspace{1em}

By Theorem \ref{Thm:Canon} and for any $C \in \mathcal{C}(\mathbb{T}_3)$,
\begin{equation}\label{Eq:SecInt}
 v_3^{(k)}(\mathbb{T}_3)_C = \frac{2}{ 3^k}\int_{(\mathbb{R}^3\setminus \mathbb{T}_3^\circ)_C} v_2^{(k+1)}(\bm{\sigma}\cap \mathbb{T}_3) \,\zeta_3^{k+4}(\bm{\sigma}) \iota^{(k)}_3(\bm{\sigma}) \lambda_3(\dd \bm{\eta}),
\end{equation}
where
\begin{equation}
    \zeta_3(\bm{\sigma}) = \frac{\vol_2(\bm{\sigma}\cap \mathbb{T}_3)}{\|\bm{\eta}\|\vol_3 \mathbb{T}_3}, \qquad \iota^{(k)}_3(\bm{\sigma}) = \int_{\mathbb{T}_3} |\bm{\eta}^\top \vect{x}-1|^k \lambda_3(\dd\vect{x}).
\end{equation}
In order to distinguish between configurations, we also write $\zeta_3(\bm{\sigma})_C$ and $\iota^{(k)}_3(\bm{\sigma})_C$ instead of just $\zeta_3(\bm{\sigma})$ and $\iota^{(k)}_3(\bm{\sigma})$. Here, $C$ is only a subscript and does not imply any decomposition of those functions.

\begin{minipage}[b]{0.45\textwidth}
\begin{figure}[H]
    \centering     \includegraphics[width=0.65\textwidth]{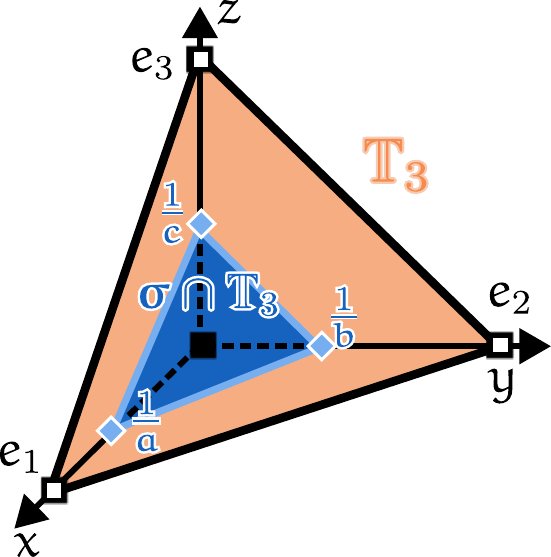}
    \caption{Configuration $\mathrm{I}$ in $\mathcal{C}(\mathbb{T}_3)$}
    \label{fig:bbT3ConI}
\end{figure}
\end{minipage}
\hfill
\begin{minipage}[b]{0.45\textwidth}
\begin{figure}[H]
    \centering     \includegraphics[width=0.65\textwidth]{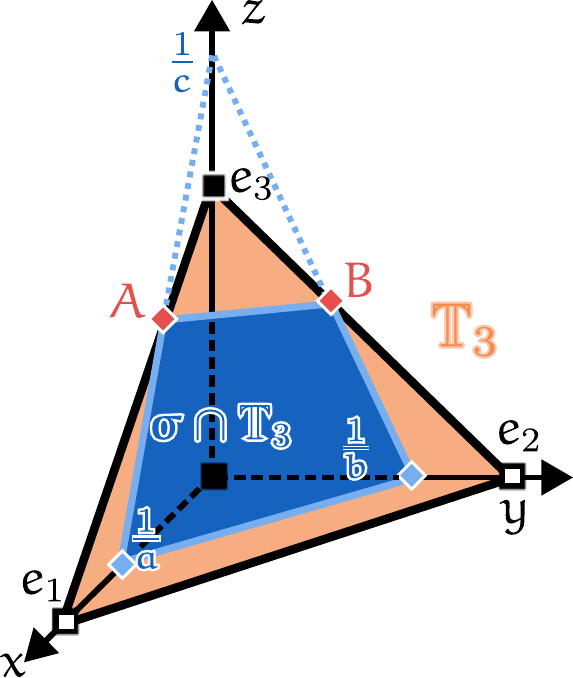}
    \caption{Configuration $\mathrm{II}$ in $\mathcal{C}(\mathbb{T}_3)$}
    \label{fig:bbT3ConII}
\end{figure}
\end{minipage}

\subsubsection{Configuration I}
To ensure $\bm{\sigma}$ separates only the point $[0,0,0]$, plugging the remaining points into Equation \eqref{Eq:Separ}, we get $a>1$, $b>1$ and $c>1$. That means $(\mathbb{R}^3\setminus \mathbb{T}_3^\circ)_\mathrm{I} = (1,\infty)^3$ is our integration domain in $a,b,c$. See Figure \ref{fig:bbT3ConI}. Denote
\begin{equation}
    \mathbb{T}_3^{abc} = \hull([0,0,0],[1/a,0,0],[0,1/b,0],[0,0,1/c]).
\end{equation}
The plane $\bm{\sigma}$ splits $\mathbb{T}_3$ into disjoint union of two domains $\mathbb{T}_3^+\sqcup \mathbb{T}_3^-$, where the one closer to the origin is precisely $\mathbb{T}_3^+ = \mathbb{T}_3^{abc}$. Therefore, by inclusion/exclusion,
\begin{equation}
\begin{split}
\iota^{(k)}_3(\bm{\sigma})_\mathrm{I}  & = \int_{\mathbb{T}_3^+}  (1-\bm{\eta}^\top \vect{x})^k \lambda_3(\dd\vect{x}) + \int_{\mathbb{T}_3^-} (\bm{\eta}^\top \vect{x}-1)^k \lambda_3(\dd\vect{x})\\
& = \int_{\mathbb{T}_3} (\bm{\eta}^\top \vect{x}-1)^k \lambda_3(\dd\vect{x}) - (1-(-1)^k) \int_{\mathbb{T}_3^{abc}} (\bm{\eta}^\top \vect{x}-1)^k \lambda_3(\dd\vect{x}).
\end{split}
\end{equation}
for any $k$ integer. These integrals are easy to compute. Mathematica Code \ref{code:iota} computes $\iota^{(k)}_3(\bm{\sigma})_\mathrm{I}$ for various values of $k$. Running the code for $k=1,2,3$, we get
\begin{align}
\label{Eq:iota31}\iota^{(1)}_3(\bm{\sigma})_{\mathrm{I}}& =\frac{1}{24} \left(\frac{2}{a b c}+a+b+c-4\right),\\
\iota^{(2)}_3(\bm{\sigma})_{\mathrm{I}}& =\frac{1}{60} \left(a^2+a b+b c+a c+b^2+c^2-5 a-5 b-5 c+10\right),\\
\begin{split}
   \label{Eq:iota33}\iota^{(3)}_3(\bm{\sigma})_{\mathrm{I}} & = \frac{1}{120} \bigg{(}\frac{2}{a b c}+15 a+15 b+15 c-6 a^2-6 b^2-6 c^2-6 a b-6 a c-6 b c\\
   & \hspace{3.4em}+a^2 b+a b^2+a^2 c+b^2 c+a c^2+b c^2+a^3+b^3+c^3+a b
   c-20\bigg{)}.
\end{split}
\end{align}
In fact, we can also deduce a general formula for $\iota_3^{(k)}(\bm{\sigma})$. It is easy to show
\begin{equation}
\begin{split}
\iota^{(k)}_3(\bm{\sigma})_\mathrm{I} & = \int_{\mathbb{T}_3} (ax_1+b x_2+c x_3-1)^k  - \frac{(-1)^k-1}{a b c} (1-x_1-x_2-x_3)^k \lambda_3(\dd\vect{x})\\
& \textstyle = \frac{ 1}{(k+1)(k+2)(k+3)} \left( \frac{1}{a b c}+\frac{(a-1)^{3+k}}{a (a-b) (a-c)}+\frac{(b-1)^{3+k}}{b (b-a)(b-c)}+\frac{(c-1)^{3+k}}{c (c-a) (c-b)} \right).
\end{split}
\end{equation}
Denote $T_2^{abc}$ as the triangle $\hull([1/a,0,0],[0,1/b,0],[0,0,1/c])$. Then
the intersection of the plane $\bm{\sigma}$ with $\mathbb{T}_3$ is precisely $T_2^{abc}$. That is,
\begin{equation}
    \bm{\sigma} \cap \mathbb{T}_3 = T_2^{abc}.
\end{equation}
By Equation \eqref{Eq:DistEta}, the distance from $T_2^{abc}$ to the origin is $\operatorname{dist}_{\bm{\sigma}}(\vect{0}) = 1/\| \bm{\eta} \|$. By base-height splitting,
\begin{equation}
    \frac{\vol_3 \mathbb{T}_3}{abc} = \vol_3\mathbb{T}_3^+ = \frac{1}{3} \operatorname{dist}_{\bm{\sigma}}(\vect{0}) \vol_2 T_2^{abc} = \frac{\vol_2(\bm{\sigma}\cap \mathbb{T}_3)}{3 \| \bm{\eta}\|},
\end{equation}
from which we immediately get
\begin{equation}
    \zeta_3(\bm{\sigma})_{\mathrm{I}} = \frac{\vol_2(\bm{\sigma}\cap \mathbb{T}_3)}{\|\bm{\eta}\|\vol_3 \mathbb{T}_3} = \frac{3}{abc}.
\end{equation}
Finally, by scale affinity (we have $n_\mathrm{I} = 3$),
\begin{equation}
v_2^{(k+1)}(\bm{\sigma}\cap \mathbb{T}_3) = v_2^{(k+1)}(T_2^{abc}) = v_2^{(k+1)}(T_2),
\end{equation}
which implies for $k=1,2,3$ that (see Table \ref{Tab:v2kT2})
\begin{equation}
v_2^{(2)}(\bm{\sigma}\cap \mathbb{T}_3) = \frac{1}{72}, \qquad v_2^{(3)}(\bm{\sigma}\cap \mathbb{T}_3) = \frac{31}{9000}, \qquad v_2^{(4)}(\bm{\sigma}\cap \mathbb{T}_3) = \frac{1}{900}.
\end{equation}

Putting everything into the integral in Equation \eqref{Eq:SecInt}, we get when $k=1$,
\begin{align}
     v_3^{(1)}(\mathbb{T}_3)_\mathrm{I} & = \frac{3}{32} \int_1^\infty  \int_1^\infty  \int_1^\infty \frac{2+abc(a+b+c-4)}{a^6 b^6 c^6} \ddd a \ddd b\ddd c = \frac{3}{2000}.
\end{align}
For higher values of $k$, we get
\begin{equation}
\begin{aligned}
   & v_3^{(2)}(\mathbb{T}_3)_\mathrm{I} \!=\! \tfrac{279}{4000000}, && v_3^{(3)}(\mathbb{T}_3)_\mathrm{I} \!=\! \tfrac{37193}{6174000000}, && v_3^{(4)}(\mathbb{T}_3)_\mathrm{I} \!=\! \tfrac{681383}{847072800000}, && v_3^{(5)}(\mathbb{T}_3)_\mathrm{I} \!=\! \tfrac{3674957}{25092716544000}.
\end{aligned}   
\end{equation}

\subsubsection{Configuration II}
In this scenario, $\bm{\sigma}$ separates two points $[0,0,0]$ and $[0,0,1]$ from $\mathbb{T}_3$. By Equation \eqref{Eq:Separ}, we get $a>1$, $b>1$ and $c<1$. We can split the condition for $c$ into to cases: either $0<c<1$ or $c<0$. In fact, both options give the same factor since they are symmetrical as they correspond to two possibilities where $\bm{\sigma}$ might intersect $\mathcal{A}([0,0,0],[0,0,1])$. Therefore we only consider the integration half-domain (indicated by $*$)
\begin{equation}
(\mathbb{R}^3\setminus \mathbb{T}_3^\circ)^{*}_\mathrm{II} = (1,\infty)^2 \times (0,1)
\end{equation}
and in the end multiply the result twice. See Figure \ref{fig:bbT3ConII}. The plane $\bm{\sigma}$ splits $\mathbb{T}_3$ into disjoint union of two domains $\mathbb{T}_3^+\sqcup \mathbb{T}_3^-$, where $\mathbb{T}_3^+$ being the one closer to the origin. Denote
\begin{align}
    \mathbb{T}_3^{abc} = & \hull\left(\left[0,0,0\right],\left[\frac{1}{a},0,0\right],\left[0,\frac{1}{b},0\right],\left[0,0,\frac{1}{c}\right]\right),\\
    \mathbb{T}_{3*}^{abc} = & \hull\left(\left[0,0,1\right],\left[\frac{1-c}{a-c},0,\frac{a-1}{a-c}\right],\left[0,\frac{1-c}{b-c},\frac{b-1}{b-c}\right],\left[0,0,\frac{1}{c}\right]\right),
\end{align}
then we can write
\begin{equation}
\mathbb{T}_3^+ = \mathbb{T}_{3}^{abc}\setminus \mathbb{T}_{3*}^{abc} = \hull\big{(}[0,0,0],[\tfrac{1}{a},0,0],[0,\tfrac{1}{b},0],[\tfrac{1-c}{a-c},0,\tfrac{a-1}{a-c}],[0,\tfrac{1-c}{b-c},\tfrac{b-1}{b-c}]\big{)}
\end{equation}
and thus, by inclusion/exclusion
\begin{equation}
\begin{split}
\iota^{(k)}_3(\bm{\sigma})_\mathrm{II}  & = \int_{\mathbb{T}_3} (\bm{\eta}^\top \vect{x}-1)^k \lambda_3(\dd\vect{x}) - (1-(-1)^k) \int_{\mathbb{T}_3^{abc}} (\bm{\eta}^\top \vect{x}-1)^k \lambda_3(\dd\vect{x})\\
& + (1-(-1)^k) \int_{\mathbb{T}_{3*}^{abc}} (\bm{\eta}^\top \vect{x}-1)^k \lambda_3(\dd\vect{x}).
\end{split}
\end{equation}
for any $k$ integer. These integrals are again easy to compute. Following Mathematica Code \ref{code:iota2} computes $\iota^{(k)}_3(\bm{\sigma})_\mathrm{II}$ for various values of $k$. Running the code for $k=1$ and $k=3$, we obtain
\begin{align}
\iota^{(1)}_3(\bm{\sigma})_{\mathrm{II}} = \iota^{(1)}_3(\bm{\sigma})_{\mathrm{I}} - \frac{(1-c)^4}{12 c (a-c) (b-c)},&&
\iota^{(3)}_3(\bm{\sigma})_\mathrm{II} = \iota^{(3)}_3(\bm{\sigma})_\mathrm{I} - \frac{(1-c)^6}{60 c (a-c) (b-c)}.
\end{align}
where the functions $\iota^{(1)}_3(\bm{\sigma})_{\mathrm{I}}$ and $\iota^{(3)}_3(\bm{\sigma})_{\mathrm{I}}$ are given by Equations \eqref{Eq:iota31} and \eqref{Eq:iota33}. In general case for any $k$ integer, we have
\begin{equation}
    \iota^{(k)}_3(\bm{\sigma})_\mathrm{II} = \iota^{(k)}_3(\bm{\sigma})_\mathrm{I} - \frac{(1-(-1)^k)(1-c)^{3+k}}{(k+1)(k+2)(k+3) c (a-c) (b-c)}.
\end{equation}
For $k$ even, we have $\iota_3^{(k)}(\bm{\sigma})_\mathrm{II} = \iota_3^{(k)}(\bm{\sigma})_\mathrm{I} = \iota_3^{(k)}(\bm{\sigma})_\mathrm{N}$ since the part with $1-(-1)^k$ vanishes. However, since the even metric moments $v_3^{(k)}(T_3)$ are trivial to compute by integration alone, we will proceed by assuming $k$ is odd. Denote 
\begin{align}
    T_2^{abc} = & \hull\left(\left[\frac{1}{a},0,0\right],\left[0,\frac{1}{b},0\right],\left[0,0,\frac{1}{c}\right]\right),\\
    T_{2*}^{abc} = & \hull\left(\left[\frac{1-c}{a-c},0,\frac{a-1}{a-c}\right],\left[0,\frac{1-c}{b-c},\frac{b-1}{b-c}\right],\left[0,0,\frac{1}{c}\right]\right),
\end{align}
we have for the intersection of $\bm{\sigma}$ with $\mathbb{T}_3$,
\begin{equation}
    \bm{\sigma} \cap \mathbb{T}_3 = T_2^{abc}\setminus T_{2*}^{abc} = \hull\big{(}[\tfrac{1}{a},0,0],[0,\tfrac{1}{b},0],[\tfrac{1-c}{a-c},0,\tfrac{a-1}{a-c}],[0,\tfrac{1-c}{b-c},\tfrac{b-1}{b-c}]\big{)},
\end{equation}
so $n_\mathrm{II} = 4$. By scale affinity
\begin{equation}
v_2^{(k+1)}(\bm{\sigma}\cap \mathbb{T}_3) = v_2^{(k+1)}(T_2^{abc}\setminus T_{2*}^{abc}) = v_2^{(k+1)}(\mathbb{U}_2^{\alpha\beta}),
\end{equation}
where $\mathbb{U}_2^{\alpha\beta} = \mathbb{T}_2 \setminus \mathbb{T}_2^{\frac{1}{\alpha},\frac{1}{\beta}}= \hull([0,0],[1,0],[0,1])\setminus\hull([0,0],[\alpha,0][0,\beta])$ is a canonical truncated triangle with
\begin{equation}\label{Eq:AlphaBetaTrans}
    \alpha = \frac{a (1-c)}{a-c}, \qquad \beta = \frac{b (1-c)}{b-c}.
\end{equation}

See Figure \ref{fig:secT3ConII} below for an illustration of $\mathbb{U}_2^{\alpha\beta}$ and its volumetric moments.
\begin{figure}[H]
    \centering     \includegraphics[width=0.30\textwidth]{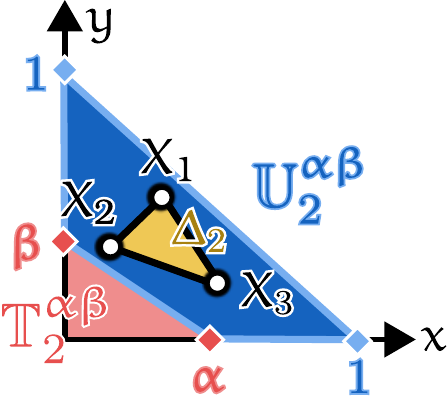}
    \caption{Mean section moments in the second $\mathcal{C}(\mathbb{T}_3)$ configuration}
    \label{fig:secT3ConII}
\end{figure}

Since $\vol_2 \mathbb{U}_2^{\alpha\beta} = \tfrac{1}{2}(1-\alpha\beta)$, we can write in general,
\begin{equation}
v_2^{(k+1)}(\mathbb{U}_2^{\alpha\beta}) = \left(\frac{2}{1-\alpha\beta}\right)^{k+4} \int_{(\mathbb{U}_2^{\alpha\beta})^3} \Delta_2^{k+1} \ddd \vect{x}_0\dd \vect{x}_1 \dd \vect{x}_2,
\end{equation}
We would like to find $v_2^{(k+1)}(\mathbb{U}_2^{\alpha\beta})$ for odd $k$. This is, luckily, trivial, since we are now integrating even powers of
\begin{equation}
\Delta_2 = \frac{1}{2!}\left|\det(\vect{x}_1-\vect{x}_0\,|\,\vect{x}_2-\vect{x}_0)\right|.
\end{equation}
The calculation can be carried out in Mathematica using Code \ref{code:vII}, which exploits the symmetries and uses inclusion/exclusion. Running the code for $k=1$ and $k=3$, we obtain
\begin{align}
v_2^{(2)}(\mathbb{U}_2^{\alpha\beta}) & \!= \! \frac{\alpha ^4 \beta ^4\!-\!8 \alpha ^3 \beta ^3\!+\!8 \alpha ^3 \beta ^2\!-\!4 \alpha ^3 \beta \!+\!8 \alpha ^2 \beta ^3\!-\!10 \alpha ^2 \beta ^2\!+\!8 \alpha ^2 \beta \!-\!4 \alpha  \beta ^3\!+\!8 \alpha  \beta ^2\!-\!8 \alpha  \beta \!+\!1}{72(1-\alpha  \beta )^4},\\
    v_2^{(4)}(\mathbb{U}_2^{\alpha\beta}) & \!=\! \frac{\!\!\begin{Bmatrix}
   46 \alpha ^3\!-\!31 \alpha ^3 \beta ^5 \beta ^4\!-\!6 \alpha  \beta ^5\!+\!18 \alpha  \beta ^4\!+\!32 \alpha  \beta ^2\!-\!19 \alpha  \beta \!-\!31 \alpha  \beta ^3\!-\!19 \alpha ^5 \beta ^5\!+\!32 \alpha ^5 \beta ^4\\
   \!+\!\alpha ^6 \beta ^6\!-\!6 \alpha ^5 \beta\!+\!18 \alpha ^5 \beta ^2 \!-\!31 \alpha ^3 \beta\!+\!32 \alpha ^4 \beta^5\!-\!47 \alpha ^4 \beta ^4\!+\!46 \alpha ^4 \beta ^3\!-\!34 \alpha ^4 \beta ^2\!+\!18 \alpha ^4 \beta \!\\
   \!-\!31 \alpha ^5 \beta ^3\!-\!50\alpha ^3 \beta ^3\!+\!46 \alpha ^3 \beta ^2\!+\!18 \alpha ^2 \beta ^5\!-\!34 \alpha ^2 \beta ^4\!+\!46 \alpha ^2 \beta ^3\!-\!47 \alpha ^2\beta ^2\!+\!32 \alpha ^2 \beta\!+\!1
   \end{Bmatrix}\!\!}{900 (1-\alpha\beta)^6}.
\end{align}
Finally, by definition (alternatively by Equation \eqref{Eq:ZetaEta})
\begin{equation}
    \zeta_3(\bm{\sigma})_\mathrm{II} = \frac{\vol_2(\bm{\sigma}\cap \mathbb{T}_3)}{\| \bm{\eta}\| \vol_3 \mathbb{T}_3} = (1-\alpha\beta)\frac{\vol_2 T_2^{abc}}{\| \bm{\eta}\| \vol_3 \mathbb{T}_3} = (1-\alpha\beta) \zeta_3(\bm{\sigma})_\mathrm{I}.
\end{equation}
Before we proceed to evaluate the final integral, we make the following change of variables $(a,b,c) \to (\alpha,\beta,c)$ via transformation Equations \eqref{Eq:AlphaBetaTrans}, which transform the integration half-domain into
\begin{equation}
(\mathbb{R}^3\setminus \mathbb{T}_3^\circ)^{*}_\mathrm{II}\,|_{\alpha,\beta,c} = (1-c,1)^2 \times (0,1).
\end{equation}
Note that, if $c$ is treated as a parameter, the variables $a,b$ depend on $\alpha,\beta$ separately. As a consequence,
\begin{equation}
    \dd a = \frac{c(1-c) \ddd \alpha}{(1-c-\alpha)^2}, \qquad \dd b = \frac{c(1-c) \ddd \beta}{(1-c-\beta)^2}
\end{equation}
and thus one has for the of transformation of measure
\begin{equation}
\lambda_3(\dd \bm{\eta}) = \dd a \ddd b  \ddd c = \frac{c^2(1-c)^2 \ddd \alpha  \ddd \beta\ddd c}{(1-c-\alpha)^2(1-c-\beta)^2}.
\end{equation}
Putting everything into the integral in Equation \eqref{Eq:SecInt} with prefactor $2$, we get when $k=1$,
\begin{equation}
\begin{split}
     & v_3^{(1)}(\mathbb{T}_3)_\mathrm{II} = \frac{3}{16} \int_0^1  \int_{1-c}^1 \int_{1-c}^1 \frac{3 (1-c)^2 (1-c-\alpha )^3 (1-c-\beta )^3 (1-\alpha  \beta )}{16 c^{13} \alpha ^5 \beta ^5}\times\\
     & \left(\frac{2 (1-c-\alpha ) (1-c-\beta ) \left(1-(1-c)^2 \alpha  \beta \right)}{c^3 \alpha  \beta}+c \left(1-\frac{\alpha }{1-c-\alpha }-\frac{\beta }{1-c-\beta }\right)-4\right)\times\\
     & \big{(}1\!-\!8\alpha  \beta \!+\!8 \alpha ^2 \beta \!-\!4 \alpha ^3 \beta \!+\!8 \alpha  \beta ^2\!-\!10 \alpha ^2 \beta ^2\!+\!8\alpha ^3 \beta ^2\!-\!4 \alpha  \beta ^3\!+\!8 \alpha ^2 \beta ^3\!-\!8 \alpha ^3 \beta ^3\!+\!\alpha ^4 \beta^4\big{)} \ddd \alpha \ddd\alpha\ddd c.\\
\end{split}
\end{equation}
Integrating out $\alpha,\beta$ can be done relatively easily, we end up with
\begin{equation}
v_3^{(1)}(\mathbb{T}_3)_\mathrm{II}  =  \int_0^1  \frac{c^2 p_0+4800 c (1-c)^2 p_1 \ln (1-c)+3600 (1-c)^2 p_2 \ln ^2(1-c)}{19200 c^{16}} \dd c,
\end{equation}
where
\begin{align}
\begin{split}
  p_0     & = 8265600\!-\!49593600 c\!+\!111530400 c^2\!-\!103044000 c^3 \!-\!10353200 c^4\!+\!114147200 c^5 \\
    &\!-\!115229200 c^6\!+\!58917200 c^7\!-\!17280824 c^8\!+\!2861248 c^9\!-\!220122 c^{10}\!-\!702 c^{11}\!+\!213 c^{12},
\end{split}
\\[1ex]
\begin{split}
p_1  & = 3444\!-\!15498 c\!+\!22076 c^2\!-\!4942 c^3\!-\!18060 c^4\!+\!21343 c^5 \!-\!11086 c^6\!+\!3147 c^7\!-\!496 c^8\!+\!36 c^9,
\end{split}
\\[1ex]
\begin{split}
p_2    & = 2296\!-\!11480 c\!+\!19692 c^2\!-\!9888 c^3\!-\!11350 c^4\!+\!20442 c^5 \!-\!13971 c^6\!+\!5296 c^7\!-\!1191 c^8\!+\!154 c^9\!-\!9 c^{10}.
\end{split}
\end{align}
The last $c$ integration can be carried out by Mathematica (alternatively, we can use derivatives of the Beta function). We get
\begin{equation}
    v_3^{(1)}(\mathbb{T}_3)_\mathrm{II} = \frac{217}{54000}-\frac{\pi ^2}{45045}.
\end{equation}
For higher values of $k$, the integration possesses similar difficulty, we got
\begin{align}
   v_3^{(3)}(\mathbb{T}_3)_\mathrm{II} & = \frac{105199}{9261000000}+\frac{79 \pi ^2}{7274767500},&&
   v_3^{(5)}(\mathbb{T}_3)_\mathrm{II}  = \frac{1890871}{9601804800000}-\frac{547 \pi^2}{26831987910000}.
    \end{align}

\subsubsection{Contribution from all configurations}
By Equation \eqref{Eq:DecoCon} and by affine invariancy,
\begin{equation}
v_3^{(k)}(T_3) = \sum_{C \in \mathcal{C}(T_3)} w_C \, v_3^{(k)}(T_3)_C = 4 v_3^{(k)}(\mathbb{T}_3)_{\mathrm{I}} + 3v_3^{(k)}(\mathbb{T}_3)_{\mathrm{II}},
\end{equation}
from which, immediately, we get Buchta's and Reitzner's \cite{buchta1992expected}, Mannion's \cite{mannion1994volume} and Philip's \cite{philip2006tetrahedron} result for $v_3^{(1)}(T_3)$ and also some of its further generalisations
\begin{align}
   v_3^{(1)}(T_3) & = \frac{13}{720} - \frac{\pi^2}{15015} \approx 0.01739823925,\\[2ex]
   \label{Eq:v33T3}v_3^{(3)}(T_3) & = \frac{733}{12600000}+\frac{79 \pi ^2}{2424922500} \approx 0.0000584961, \\[2ex]
   v_3^{(5)}(T_3) & = \frac{5125739}{4356374400000}-\frac{547 \pi ^2}{8943995970000} \approx 0.000001176003.
\end{align}

\subsection{Octahedron odd volumetric moments}
By affine invariancy, it does not matter how large is the volume of an octahedron as long as the octahedron stays regular. Hence, we may select the following representation of a regular octahedron
\begin{equation}
    O_3 = \hull([\pm 1,0,0],[0,\pm 1,0],[0,0,\pm 1]),
\end{equation}
which has $\vol_3 O_3 = 4/3$. According to its genealogy $\mathcal{C}(O_3)$, it has three configurations as shown in Figure \ref{fig:OCTAHE_GENEALOGY}. Table \ref{tab:Octavol} shows specifically which sets $S$ of vertices are separated by a cutting plane $\bm{\sigma}$ in which configurations in our local representation of $O_3$ above. Note that there is an ambiguity how to select those vertices as long it is the same configuration.
\begin{table}[htb]
    \centering
\begin{tabular}{|c|c|c|c|}
\hline
 $C$ & $\mathrm{I}$ & $\mathrm{II}$ & $\mathrm{III}$\\
 \hline
 $S$ & $[0,0,1]$ & \begin{tabular}{c} $[1,0,0]$ \\ $[0,1,0]$ \end{tabular} & \begin{tabular}{c} $[1,0,0]$\\ $[0,1,0]$\\ $[0,0,1]$ \end{tabular}\\
 \hline 
 $w_C$ & $6$ & $12$ & $4$\\
 \hline
\end{tabular}
    \caption{Configurations $\mathcal{C}(O_3)$ in a local representation.}
    \label{tab:Octavol}
\end{table}

By Theorem \ref{Thm:Canon} and for any $C \in \mathcal{C}(O_3)$,
\begin{equation}\label{Eq:SecIntOcta}
v_3^{(k)}(O_3)_C = \frac{2}{ 3^k}\int_{(\mathbb{R}^3\setminus O_3^\circ)_C} v_2^{(k+1)}(\bm{\sigma}\cap O_3) \,\zeta_3^{k+4}(\bm{\sigma}) \iota^{(k)}_3(\bm{\sigma}) \lambda_3(\dd \bm{\eta}),
\end{equation}
where
\begin{equation}
    \zeta_3(\bm{\sigma}) = \frac{\vol_2(\bm{\sigma}\cap O_3)}{\|\bm{\eta}\|\vol_3 O_3}, \qquad \iota^{(k)}_3(\bm{\sigma}) = \int_{O_3} |\bm{\eta}^\top \vect{x}-1|^k \lambda_3(\dd\vect{x}).
\end{equation}
We can describe the relation $\vect{x}=(x,y,z)^\top \in O_3$ by the following set of eight linear inequalities (all of them keep $\vect{0}\in O_3$)
\begin{equation}\label{Eq:OctaIneq}
\begin{split}
    & x + y + z < 1, \quad -x + y + z < 1, \quad x + y - z < 1, \quad -x + y - z < 1,\\
    & x - y + z < 1, \quad -x - y + z < 1, \quad x - y - z < 1, \quad -x - y - z < 1.
\end{split}
\end{equation}

\subsubsection{Configuration I}
First, we find $(\mathbb{R}^3\setminus O_3^\circ)_\mathrm{I}$. By Equation \eqref{Eq:Separ}, plugging the configurations points from $S$ into $\bm{\eta}^\top \vect{x} > 1$ and from $V\setminus S$ into $\bm{\eta}^\top \vect{x} < 1$ (flipped inequalities give the empty set), we get that $a,b,c$ must satisfy
\begin{equation}
    c>1, \quad -c<1, \quad a<1, \quad -a<1, \quad b<1, \quad -b<1,
\end{equation}
so $(\mathbb{R}^3\setminus O_3^\circ)_\mathrm{I}  =  (-1,1)^2\times (1,\infty)$. Next, $\bm{\sigma}$ splits $O_3$ into $O_3^+\sqcup O_3^-$. We can parametrize those domains by simultaneously solving Equation \eqref{Eq:Pplusminus} and \eqref{Eq:OctaIneq}. From those inequalities, we get by linear programming
\begin{equation}\label{Eq:OctaIplus}
    O_3^+ \!=\! \hull\!\big{(}[-1, \!0,\! 0],\! [0,\! -1,\! 0],\! [0,\! 0,\! -1],\! [0,\! 0,\! 1],\! [0,\! 1,\! 0],\! [\tfrac{b-1}{b-a},\!\tfrac{a-1}{a-b},\!0],\![\tfrac{b+1}{a+b},\!\tfrac{1-a}{a+b},\!0],\! [\tfrac{c-1}{c-a},\!0,\!\tfrac{a-1}{a-c}],\! [\tfrac{c+1}{a+c},\!0,\!\tfrac{1-a}{a+c}]\big{)}.
\end{equation}
Note that a simultaneous system of inequalities can be reduced using the eponymous \texttt{Reduce} command in Mathematica (used also in the case above). As a direct consequence of this parametrisation, we get
\begin{equation}
\vol_3 O_3^+ = \frac{2 \left(c^4+3 c^3-3 c^2-2 a^2 c^2-2 b^2 c^2+c+2 a^2 b^2\right)}{3 (c-a) (c+a) (c-b) (c+b)}
\end{equation}
from which, by Equation \eqref{Eq:ZetaEta},
\begin{equation}
    \zeta_3(\bm{\sigma})_\mathrm{I} = \frac{3 c (c-1)^2}{2 (c-a) (c+a) (c-b) (c+b)}.
\end{equation}
Also, thanks to our parametrisation, we get
\begin{equation}
\iota^{(k)}_3(\bm{\sigma})_\mathrm{I} = \int_{O_3^+}  (1-\bm{\eta}^\top \vect{x})^k \lambda_3(\dd\vect{x}) + \int_{O_3^-} (\bm{\eta}^\top \vect{x}-1)^k \lambda_3(\dd\vect{x})
\end{equation}
for any real $k>-1$ almost for free, namely for $k=1$ and $k=3$,
\begin{align}
    \iota_3^{(1)}(\bm{\sigma})_\mathrm{I} & = \frac{c^5+6 c^3+4 a^2 b^2-4 c^2 \left(1+a^2+b^2\right)+c}{3 (c-a) (c+a) (c-b) (c+b)},\\
    \iota_3^{(3)}(\bm{\sigma})_\mathrm{I} & = \frac{\!c^7 \!+\!15 c^5\!+\!15 c^3 \!-\!6c^2\!+\!6 a^4 c^2\!-\!20 b^2 c^2\!-\!6 b^4 c^2\!-\!20 a^2 c^2 \!-\! 6 a^2 b^2 c^2 \!+\! c \!+\!20 a^2 b^2\!+\!6 a^4 b^2\!+\! 6 a^2 b^4
    \!}{15 (c-a) (c+a) (c-b) (c+b)}
\end{align}
and also $n_\mathrm{I} = 4$ since
\begin{equation}
    \bm{\sigma} \cap O_3 = \hull\big{(} [\tfrac{b-1}{b-a},\tfrac{a-1}{a-b},0],[\tfrac{b+1}{a+b},\tfrac{1-a}{a+b},0], [\tfrac{c-1}{c-a},0,\tfrac{a-1}{a-c}], [\tfrac{c+1}{a+c},0,\tfrac{1-a}{a+c}] \big{)}.
\end{equation}
We can use a computer to deduce the following even moments
\begin{align}
    v_2^{(2)}(\bm{\sigma}\cap O_3) = \frac{3 c^4\!+\!c^2(a^2\!+\!b^2)\!-\!a^2 b^2}{288 c^4},&&
    v_2^{(4)}(\bm{\sigma}\cap O_3) = \frac{\begin{Bmatrix}
        12c^6\!+\!17a^2c^4\!-\!14a^2 b^2c^2\!+\!3 a^4c^2\\
        \!+\!17 b^2c^4\!+\!3b^4c^2\!-\! 3 a^4 b^2 \!-\! 3 a^2 b^4
    \end{Bmatrix}}{28800 c^6}.
\end{align}
Therefore, putting everything together,
\begin{align}
\begin{split}
    v_3^{(1)}(O_3)_\mathrm{I} = & \frac{3}{512}\int_1^\infty \int_{-1}^1 \int_{-1}^1 \frac{c \, (c-1)^{10} \left(c^2 a^2 + c^2 b^2 - a^2 b^2 + 3 c^4\right)}{(c-a)^6 (c+a)^6 (c-b)^6 (c+b)^6} \\
   & \times \left(4 a^2 b^2 - 4 a^2 c^2 -4 b^2 c^2 - 4 c^2+c^5+6 c^3+c\right) \ddd a  \ddd b  \ddd c,
\end{split}
\end{align}
similarly for $v_3^{(3)}(O_3)_\mathrm{I}$. Integration in Mathematica then reveals
\begin{align}
    v_3^{(1)}(O_3)_\mathrm{I} & = \frac{2569561}{230400} - \frac{ 11571 \pi^2}{10240},\\
    v_3^{(3)}(O_3)_\mathrm{I} & = \frac{3260724307264561}{433954160640000}-\frac{109143647 \pi ^2}{143360000},\\
    v_3^{(5)}(O_3)_\mathrm{I} & = \frac{1306914286180250262095927}{59965827237606850560000}-\frac{3676076446537 \pi ^2}{1664719257600}.
\end{align}

\subsubsection{Configuration II}
By Equation \eqref{Eq:Separ}, plugging the configurations points from $S$ into $\bm{\eta}^\top \vect{x} > 1$ and from $V\setminus S$ into $\bm{\eta}^\top \vect{x} < 1$ (flipped inequalities give empty set), we get that $a,b,c$ must satisfy
\begin{equation}
    c<1, \quad -c<1, \quad a>1, \quad -a<1, \quad b>1, \quad -b<1,
\end{equation}
so $(\mathbb{R}^3\setminus O_3^\circ)_\mathrm{II}  =  (1,\infty)^2\times (-1,1)$. Next, $\bm{\sigma}$ splits $O_3$ into $O_3^+\sqcup O_3^-$. We can parametrize those domains by simultaneously solving Equation \eqref{Eq:Pplusminus} and \eqref{Eq:OctaIneq}. Then, by linear programming, we get $n_\mathrm{II} = 6$ since we obtained
\begin{equation}
\begin{split}
    O_3^+ = & \hull\big{(} [-1, 0, 0], [0, -1, 0], [0, 0, -1], [0, 0, 1], [0,\tfrac{c-1}{c-b},\tfrac{b-1}{b-c}],\\
    & [0,\tfrac{c+1}{b+c},\tfrac{1-b}{b+c}], [\tfrac{1-b}{a+b},\tfrac{a+1}{a+b},0], [\tfrac{b+1}{a+b},\tfrac{1-a}{a+b},0],[\tfrac{c-1}{c-a},0,\tfrac{a-1}{a-c}],[\tfrac{c+1}{a+c},0,\tfrac{1-a}{a+c}]\big{)},
\end{split}
\end{equation}
from which, using Mathematica,
\begin{equation}
\vol_3 O_3^+ = \frac{2 \begin{Bmatrix}
3 a^2 b^2+a^3 b^2+a^2 b^3-c^2+3 a c^2-3 a^2 c^2-a^3 c^2 + 3 b c^2 \\
-3 a b c^2-a^2 b c^2-3 b^2 c^2-a b^2 c^2-b^3 c^2+2 a c^4+2 b c^4-a b
\end{Bmatrix}}{3 (a+b) (a-c) (b-c) (a+c) (b+c)}
\end{equation}
which further yields, by Equation \eqref{Eq:ZetaEta},
\begin{equation}
    \zeta_3(\bm{\sigma})_\mathrm{II} = \frac{3 \left(2 a c^2+2 b c^2-a b+a^2 b^2-a^2 c^2-b^2 c^2-a b c^2-c^2\right)}{2 (a+b) (a-c) (b-c) (a+c) (b+c)}.
\end{equation}
Next, for $k=1$, we obtain
\begin{align}
    \iota_3^{(1)}(\bm{\sigma})_\mathrm{II}  =  \frac{\begin{Bmatrix}
        a^4 b^2-a^4 c^2+a^3 b^3-a^3 b c^2+a^2 b^4-a^2 b^2 c^2+6 a^2 b^2-6 a^2 c^2-a b\\
        -a b^3 c^2-6 a b c^2+4 a c^4+4 a c^2-b^4 c^2-6 b^2 c^2+4 b c^4+4 b c^2-c^2
    \end{Bmatrix}}{3 (a + b) (a - c) (b - c) (a + c) (b + c)}.
\end{align}
As $\iota_3^{(3)}(\bm{\sigma})$, $v_2^{(2)}(\bm{\sigma}\cap O_3)$ and $v_2^{(4)}(\bm{\sigma}\cap O_3)$ are rather long, we are not listing them here. Putting everything together and integrating over $a,b,c$, we get
\begin{align}
    v_3^{(1)}(O_3)_\mathrm{II} & = \frac{72588071 \pi ^2}{92252160}-\frac{12023076361}{1548288000},\\
    v_3^{(3)}(O_3)_\mathrm{II} & = \frac{38809663388059 \pi ^2}{95351832576000}-\frac{830108924076197}{206644838400000},\\
    v_3^{(5)}(O_3)_\mathrm{II} & = \frac{24706383193486257481 \pi^2}{22106368864419840000}-\frac{6614474327656066615169519}{599658272376068505600000}.
\end{align}

\subsubsection{Configuration III}
By Equation \eqref{Eq:Separ}, $a,b,c$ must satisfy
\begin{equation}
    c>1, \quad -c<1, \quad a>1, \quad -a<1, \quad b>1, \quad -b<1,
\end{equation}
or with $<$ and $>$ flipped,
\begin{equation}
    c<1, \quad -c>1, \quad a<1, \quad -a>1, \quad b<1, \quad -b>1,
\end{equation}
so $(\mathbb{R}^3\setminus O_3^\circ)_\mathrm{II}  =  ((-\infty,-1)\cup (1,\infty))^3$. By symmetry, we may integrate only over half-domain $(\mathbb{R}^3\setminus O_3^\circ)^{*}_\mathrm{II}  =  (1,\infty)^3$. Next, $O_3^+\sqcup O_3^-$, where, by simultaneously solving Equations \eqref{Eq:Pplusminus} and \eqref{Eq:OctaIneq} and by linear programming,
\begin{equation}
\begin{split}
    O_3^+ = \hull\big{(} &[-1, 0, 0], [0, -1, 0], [0, 0, -1], [0,\tfrac{1-c}{b+c},\tfrac{b+1}{b+c}], [0, \tfrac {c + 1} {b + c}, \tfrac {1 - b} {b + c}],\\
    & [\tfrac {1 - b} {a + b}, \tfrac {a + 1} {a + b}, 0], [\tfrac {b + 1} {a + b}, \tfrac {1 - a} {a + b}, 0], [\tfrac {1 - c} {a + c}, 0, \tfrac {a + 1} {a + c}], [\tfrac {c + 1} {a + c}, 0, \tfrac {1 - a} {a + c}]\big{)},
\end{split}
\end{equation}
which means $n_\mathrm{III}=6$. Using Mathematica,
\begin{equation}
\vol_3 O_3^+ = \frac{2 \left(3 a b+a^2 b+a b^2+3 a c+a^2 c+3 b c+2 a b c+b^2 c+a c^2+b c^2-1\right)}{3 (a+b) (a+c) (b+c)}
\end{equation}
from which, by Equation \eqref{Eq:ZetaEta},
\begin{equation}
    \zeta_3(\bm{\sigma})_\mathrm{III} = \frac{3 (a b+a c+b c-1)}{2 (a+b) (a+c) (b+c)}.
\end{equation}
Next, for $k=1$ and $k=3$, we obtained
\begin{align}
    \iota_3^{(1)}(\bm{\sigma})_\mathrm{III} & = \frac{\begin{Bmatrix}
    6 a b-1+a^3 b+a^2 b^2+a b^3+6 a c+a^3 c+6 b c\\
    +2 a^2 b c+2 a b^2 c+b^3 c+a^2 c^2+2 a b c^2+b^2 c^2+a c^3+b c^3    
    \end{Bmatrix}}{3
   (a+b) (a+c) (b+c)}
\end{align}
and
\begin{align}
    v_2^{(2)}(\bm{\sigma}\!\cap\! O_3) &  \!=\!  \frac{\begin{Bmatrix}
    3 a b\!-\!1\!-\!6 a^2\!+\!20 a^3 b\!-\!6 b^2\!-\!18 a^2 b^2\!-\!6 a^4 b^2\!+\!20 a b^3\!+\!3 a c\!-\!6 a^2 b^4\! -\!21 a^4 b^4\!\\
    +\!3 a^5 b^5\!+\!18 a^3 b^3\!+\!20 a^3 c\!+\!3 b c\!-\!12 a^2 b c \!-\!12 a^4 b c\!-\!12 a b^2 c\!+\!18 a^3 b^2 c\!+\!20 b^3 c\!\\
    +\!18 a^2 b^3 c\!-\!48 a^4 b^3 c\!-\!12 a b^4 c\!-\!48 a^3 b^4 c\!+\!15 a^5 b^4 c\!+\!15 a^4 b^5 c \!-\!6 c^2-\!18 a^2 c^2\!\\
    \!-\!6 a^4 c^2\!-\!12 a b c^2\!+\!18 a^3 b c^2\!-\!18 b^2 c^2\!-\!6 b^4 c^2-\!54 a^4 b^2 c^2\!+\!18 a b^3 c^2\!-\!84 a^3 b^3 c^2 \!\\
    \!+\!30 a^5 b^3 c^2\!+\!108 a^2 b^2 c^2\!-\!54 a^2 b^4 c^2\!+\!54 a^4 b^4 c^2\!+\!30 a^3 b^5 c^2\!+\!30 a^5 b^2 c^3\!+\!18 a^3 c^3\\
    \!+\!20 b c^3\!+\!18 a^2 b c^3\!-\!48 a^4 b c^3\!+\!18 a b^2 c^3\!-\!84 a^3 b^2 c^3\!+\!20 a c^3\!+\!18 b^3 c^3\!-\!84 a^2 b^3 c^3\!\\
    \!-\!6 b^2 c^4\!-\!48 a b^4 c^3\!+\!78 a^3 b^4 c^3\!+\!30 a^2 b^5 c^3\!-\!6 a^2 c^4\!-\!21 a^4 c^4 \!-\!12 a b c^4\!-\!48 a^3 b c^4\!\\
    \!+\!15 a^5 b c^4\!+\!54 a^2 b^4 c^4\!-\!54 a^2 b^2 c^4\!+\!54 a^4 b^2 c^4\!+\!78 a^4 b^3 c^3\!+\!78 a^3 b^3 c^4+\!15 a b^5 c^4\!\!\\
    \!-\!21 b^4 c^4\!+\!3 b^5 c^5\!+\!3 a^5 c^5\!-\!48 a b^3 c^4 \!+\!15 a^4 b c^5\!+\!30 a^3 b^2 c^5\!+\!30 a^2 b^3 c^5\!+\!15 a b^4 c^5
    \end{Bmatrix}}{288 (a b+a c+b c\!-\!1)^5},
\end{align}
As $\iota_3^{(3)}(\bm{\sigma})$ and $v_2^{(4)}(\bm{\sigma}\cap O_3)$ are rather long, we are not listing them here. Putting everything together and integrating over $a,b,c$ and multiplying by the factor of two (as $(1,\infty)^3$ is only a half-domain of integration),
\begin{align}
    v_3^{(1)}(O_3)_\mathrm{III} & = \frac{376079789}{57344000}-\frac{2721 \pi ^2}{4096},\\
    v_3^{(3)}(O_3)_\mathrm{III} & = \frac{752252545541087}{964342579200000}-\frac{90646167 \pi ^2}{1146880000},\\
    v_3^{(5)}(O_3)_\mathrm{III} & = \frac{3995047725382306264583}{9994304539601141760000}-\frac{4195233727 \pi ^2}{103582531584}.
\end{align}

\subsubsection{Contribution from all configurations}
By Equation \eqref{Eq:DecoCon},
\begin{equation}
v_3^{(k)}(O_3) = \sum_{C \in \mathcal{C}(O_3)} w_C \, v_3^{(k)}(O_3)_C = 6 v_3^{(k)}(O_3)_{\mathrm{I}} + 12v_3^{(k)}(O_3)_{\mathrm{II}}+4v_3^{(k)}(O_3)_{\mathrm{III}},
\end{equation}
from which, immediately
\begin{align}
   v_3^{(1)}(O_3) & = \frac{19297 \pi ^2}{3843840}-\frac{6619}{184320} \approx 0.013637411,\\[2ex]
   \label{Eq:v33O3}v_3^{(3)}(O_3) & = \frac{1628355709 \pi ^2}{19864965120000}-\frac{81932629}{103219200000} \approx 0.0000152505, \\[2ex]
   v_3^{(5)}(O_3) & = \textstyle\frac{6356364544399 \pi ^2}{1611922729697280000}-\frac{205491225433}{5287025049600000}\approx 5.215748\cdot 10^{-8}.
\end{align}

\subsection{Cube odd volumetric moments}
We use the following standard representation of the unit cube ($\vol_3 C_3 = 1$),
\begin{equation}
    C_3 \! =\! \hull([0,0,0],\![1,0,0],\![0,1,0],\![0,0,1],\![0,1,1],\![1,0,1],\![1,1,0],\![1,1,1]).
\end{equation}
According to its genealogy $\mathcal{C}(C_3)$, it has five configurations as shown in Figure \ref{fig:CUBE_GENEALOGY}. Table \ref{tab:Cubevol} shows specifically which sets $S$ of vertices in which configurations are separated by a cutting plane $\bm{\sigma}$ in our standard representation of $C_3$ above.
\begin{table}[htb]
    \centering
\begin{tabular}{|c|c|c|c|c|c|}
\hline
 $C$ & $\mathrm{I}$ & $\mathrm{II}$ & $\mathrm{III}$ & $\mathrm{IV}$ & $\mathrm{V}$\\
 \hline
 $S$ & $[0,0,0]$ & \begin{tabular}{c} $[0,0,0]$ \\ $[0,0,1]$ \end{tabular} & \begin{tabular}{c} $[0,0,0]$\\ $[1,0,0]$\\ $[0,1,0]$ \end{tabular} & \begin{tabular}{c} $[0,0,0]$\\ $[1,0,0]$\\ $[0,1,0]$\\ $[0,0,1]$ \end{tabular} & \begin{tabular}{c} $[0,0,0]$\\ $[1,0,0]$\\ $[0,1,0]$\\ $[1,1,0]$ \end{tabular}\\
 \hline 
 $w_C$ & $8$ & $12$ & $24$ & $4$ & $3$\\
 \hline
\end{tabular}
    \caption{Configurations $\mathcal{C}(C_3)$ in the standard representation of $C_3$.}
    \label{tab:Cubevol}
\end{table}

By Theorem \ref{Thm:Canon} and for any $C \in \mathcal{C}(C_3)$,
\begin{equation}\label{Eq:SecIntCube}
v_3^{(k)}(C_3)_C = \frac{2}{ 3^k}\int_{(\mathbb{R}^3\setminus C_3^\circ)_C} v_2^{(k+1)}(\bm{\sigma}\cap C_3) \,\zeta_3^{k+4}(\bm{\sigma}) \iota^{(k)}_3(\bm{\sigma}) \lambda_3(\dd \bm{\eta}),
\end{equation}
where
\begin{equation}
    \zeta_3(\bm{\sigma}) = \frac{\vol_2(\bm{\sigma}\cap C_3)}{\|\bm{\eta}\|\vol_3 C_3}, \qquad \iota^{(k)}_3(\bm{\sigma}) = \int_{C_3} |\bm{\eta}^\top \vect{x}-1|^k \lambda_3(\dd\vect{x}).
\end{equation}
We can describe the relation $\vect{x}=(x,y,z)^\top \in C_3$ by the following set of three linear inequalities
\begin{equation}\label{Eq:CubeIneq}
    0< x < 1, \qquad 0 < y < 1, \qquad 0 < z <1.
\end{equation}
For Configuration $\mathrm{I}$, by Equation \eqref{Eq:Separ}, $a,b,c$ must satisfy
\begin{equation}
    a>1, \quad b>1, \quad a+b>1, \quad a+c>1, \quad b+c>1, \quad a+b+c>1,
\end{equation}
so $(\mathbb{R}^3\!\setminus\! C_3^\circ)_\mathrm{I} = (1,\infty)^3$. Similarly for other configurations. Since the analysis is similar as in the case of $P_3$ being a regular octahedron $O_3$, we only list the results from all configurations, see Table \ref{tab:CubeAll}.
\begin{table}[H]
    \centering
\begin{tabular}{|c|c|c|c|}
\hline
 $\!C\!$ & $v_3^{(1)}(C_3)_C$ & $v_3^{(3)}(C_3)_C$ & $v_3^{(5)}(C_3)_C$\\
 \hline
 \ru{1.1}$\!\mathrm{I}\!$ & $\frac{391}{82944000}$ & $\frac{8717}{1800338400000}$ & $\frac{932274811}{50575353828920524800}$\\[0.6ex]
 \hline 
 \ru{1.1}$\!\mathrm{II}\!$ & $\frac{34309}{186624000}$ & $\frac{648789871}{3089380694400000}$ & $\frac{36816619074923}{51228618815877414912000}$\\[0.6ex]
 \hline
 \ru{1.1}$\!\mathrm{III}\!$ & $\!\!\!\frac{3191 \pi ^2}{207360}\!-\!\frac{792503149}{5225472000}\!\!\!$ & $\!\!\!\frac{182029 \pi^2}{195955200}\!-\!\frac{113292736592927}{12357522777600000}\!\!\!$ & $\frac{213033619 \pi^2}{634894848000}-\frac{47144185844633987}{14235866239795200000}$\\[0.6ex]
 \hline
 \ru{1.1}$\!\mathrm{IV}\!$ & $\frac{198785357}{217728000}-\frac{71 \pi ^2}{768}$ & $\!\frac{22659798780677}{411917425920000}\!-\!\frac{910157 \pi^2}{163296000}\!$ & $\!\!\!\frac{26487208076498306317}{1921073205595403059200}\!-\!\frac{27814438817 \pi ^2}{19910302433280}\!\!\!$\\[0.6ex]
 \hline
 \ru{1.1}$\!\mathrm{V}\!$ & $\frac{7}{5184}$ & $\frac{29}{21870000}$ & $\frac{22473091}{6271745266483200}$\\[0.6ex]
 \hline
\end{tabular}
    \caption{Sections integrals in various configurations $\mathcal{C}(C_3)$.}
    \label{tab:CubeAll}
\end{table}

Summing up the contributions from all configurations with appropriate weights,
\begin{equation}
\begin{split}
v_3^{(k)}(C_3) & = \!\!\!\! \sum_{C \in \mathcal{C}(C_3)} w_C \, v_3^{(k)}(C_3)_C = 8 v_3^{(k)}(C_3)_{\mathrm{I}} + 12v_3^{(k)}(C_3)_{\mathrm{II}} \\
& +  24v_3^{(k)}(C_3)_{\mathrm{III}} + 4v_3^{(k)}(C_3)_{\mathrm{IV}} + 3v_3^{(k)}(C_3)_{\mathrm{V}},
\end{split}
\end{equation}
from which immediately
\begin{align}
   v_3^{(1)}(C_3) & = \frac{3977}{216000}-\frac{\pi ^2}{2160} \approx 0.01384277574,\\[2ex]
   v_3^{(3)}(C_3) & = \frac{8411819}{450084600000}-\frac{\pi ^2}{3402000} \approx 0.0000157883 , \\[2ex]
    v_3^{(5)}(C_3) & = \textstyle \frac{306749173351 \pi ^2}{124439390208000}-\frac{2225580641145943786613}{91479676456923955200000} \approx 3.673225\cdot 10^{-7}.
\end{align}
We find it striking that even though an octahedron has fewer number of configurations than a cube, the value $v_3^{(1)}(C_3)$ has been obtained by Zinani \cite{zinani2003expected}, but the octahedron case $v_3^{(1)}(O_3)$ was unknown. Keep in mind that the configurations are the same in our canonical approach as well as in the original method using Efron's section formula.

\newpage
\section{Higher-dimensional polytopes}
\subsection{Pentachoron odd volumetric moments}
By a \emph{pentachoron}\index{pentachoron}, we mean a $4$-simplex\index{4-simplex}. The regular pentachoron is then $T_4$. The analysis is somewhat analogous to the three-dimensional case. Now, we obtain the volumetric moments $v_4^{(k)}(T_4)$ for odd $k$. First, since $v_4^{(k)}(T_4)$ is an affine invariant, it must be the same as $v_4^{(k)}(\mathbb{T}_4)$, where
\begin{equation}
\mathbb{T}_4 = \hull([0,0,0,0],[1,0,0,0],[0,1,0,0],[0,0,1,0],[0,0,0,1])
\end{equation}
is the canonical pentachoron\index{pentachoron!canonical}. We have $\vol_4 \mathbb{T}_4 = 1/4!=1/24$. Let $\bm{\eta}=(a,b,c,d)^\top$ be the Cartesian parametrisation of $\bm{\sigma} \in \mathbb{A}(4,3)$ such that $\vect{x}\in \bm{\sigma} \Leftrightarrow \bm{\eta}^\top\vect{x} = 1$. We have $\|\bm{\eta}\| = \sqrt{a^2+b^2+c^2+d^2}$. Based on symmetries $\mathcal{G}(T_4)$, there are two realisable configurations we need to consider. Thanks to affine invariancy, we can again consider instead the two $\mathcal{C}(\mathbb{T}_4)$ configurations (see Table \ref{tab:PentaConfs} below).
\begin{table}[htb]
    \centering
\begin{tabular}{|c|c|c|}
\hline
 $\mathbb{T}_4$ & $\mathrm{I}$ & $\mathrm{II}$\\
 \hline
 \ru{1.7}$S$ & $[0,0,0,0]$ & \begin{tabular}{c} $[0,0,0,0]$ \\ $[0,0,0,1]$ \end{tabular} \\[0.9em]
 \hline 
 $w_C$ & $5$ & $10$\\
 \hline
\end{tabular}
    \caption{Configurations $\mathcal{C}(\mathbb{T}_4)$.}
    \label{tab:PentaConfs}
\end{table}

By Theorem \ref{Thm:Canon} and for any $C \in \mathcal{C}(\mathbb{T}_4)$,
\begin{equation}\label{Eq:SecInt4}
 v_4^{(k)}(\mathbb{T}_4)_C = \frac{6}{ 4^k}\int_{(\mathbb{R}^4\setminus \mathbb{T}_4^\circ)_C} v_3^{(k+1)}(\bm{\sigma}\cap \mathbb{T}_4) \,\zeta_4^{k+5}(\bm{\sigma}) \iota^{(k)}_4(\bm{\sigma}) \lambda_4(\dd \bm{\eta}),
\end{equation}
where
\begin{equation}
    \zeta_4(\bm{\sigma}) = \frac{\vol_3(\bm{\sigma}\cap \mathbb{T}_4)}{\|\bm{\eta}\|\vol_4 \mathbb{T}_4}, \qquad \iota^{(k)}_4(\bm{\sigma}) = \int_{\mathbb{T}_4} |\bm{\eta}^\top \vect{x}-1|^k \lambda_4(\dd\vect{x}).
\end{equation}
Again, in order to distinguish between configurations, we write $\zeta_4(\bm{\sigma})_C$ and $\iota^{(k)}_4(\bm{\sigma})_C$ instead of just $\zeta_4(\bm{\sigma})$ and $\iota^{(k)}_4(\bm{\sigma})$.

\subsubsection{Configuration I}
To ensure $\bm{\sigma}$ separates only the point $[0,0,0,0]$, we get from Equation \eqref{Eq:Separ}, that $a>1$, $b>1$, $c>1$ and $d>1$. That means $(\mathbb{R}^4\setminus \mathbb{T}_4^\circ)_\mathrm{I} = (1,\infty)^4$ is our integration domain in $a,b,c,d$. Denote
\begin{equation}
    \mathbb{T}_4^{abcd} = \hull([0,0,0,0],[\tfrac{1}{a},0,0,0],[0,\tfrac{1}{b},0,0],[0,0,\tfrac{1}{c},0],[0,0,0,\tfrac{1}{d}]).
\end{equation}
The hyperplane $\bm{\sigma}$ splits $\mathbb{T}_4$ into disjoint union of two domains $\mathbb{T}_4^+\sqcup \mathbb{T}_4^-$, where the one closer to the origin is precisely $\mathbb{T}_4^+ = \mathbb{T}_4^{abcd}$. Therefore
\begin{equation}
\begin{split}
\iota^{(k)}_4(\bm{\sigma})_\mathrm{I}  & = \int_{\mathbb{T}_4^+}  (1-\bm{\eta}^\top \vect{x})^k \lambda_4(\dd\vect{x}) + \int_{\mathbb{T}_4^-} (\bm{\eta}^\top \vect{x}-1)^k \lambda_4(\dd\vect{x})\\
& = \int_{\mathbb{T}_4} (\bm{\eta}^\top \vect{x}-1)^k \lambda_4(\dd\vect{x}) - (1-(-1)^k) \int_{\mathbb{T}_4^{abcd}} (\bm{\eta}^\top \vect{x}-1)^k \lambda_4(\dd\vect{x}).
\end{split}
\end{equation}
for any $k$ integer. These integrals are easy to compute. Mathematica Code \ref{code:4diota} computes $\iota^{(k)}_4(\bm{\sigma})_\mathrm{I}$ for various values of $k$. Running the code for $k=1$ and $k=3$, we obtain
\begin{align}
\label{Eq:iota41}\iota^{(1)}_4(\bm{\sigma})_\mathrm{I}& =\frac{1}{120} \left(\frac{2}{a b c d}+a+b+c+d-5\right),\\[2ex]
\begin{split}
   \label{Eq:iota43}\iota^{(3)}_4(\bm{\sigma})_\mathrm{I} & = \frac{1}{840} \bigg{(}\frac{2}{a b c d}+a^3+a^2 b+a^2 c+a^2 d-7 a^2+a b^2+a b c+a b d-7 a b+a c^2\\
   &+a c d-7 a c+a d^2-7 a d+21 a+21 b+b^2 c+b^2 d-7 b^2-7 b d+b c d-7 b c\\[-0.3ex]
   & +b^3+b c^2+b d^2+c^3+c^2 d-7 c^2+c d^2-7 c d+21 c+d^3-7 d^2+21 d-35\bigg{)}.
\end{split}
\end{align}
Denote $T_3^{abcd}=\hull([1/a,0,0,0], [0,1/b,0,0],[0,0,1/c,0],[0,0,$\ $0,1/d])$, then
the intersection of the hyperplane $\bm{\sigma}$ with $\mathbb{T}_4$ is precisely tetrahedron $T_3^{abcd}$. That is,
\begin{equation}
    \bm{\sigma} \cap \mathbb{T}_4 = T_3^{abcd}.
\end{equation}
By Equation \eqref{Eq:DistEta}, the distance from $T_3^{abcd}$ to the origin is $\operatorname{dist}_{\bm{\sigma}}(\vect{0}) = 1/\| \bm{\eta} \|$. By base-height splitting,
\begin{equation}
    \frac{\vol_4 \mathbb{T}_4}{abc} = \vol_4\mathbb{T}_4^+ = \frac{1}{4} \operatorname{dist}_{\bm{\sigma}}(\vect{0}) \vol_3 T_3^{abcd} = \frac{\vol_3(\bm{\sigma}\cap \mathbb{T}_4)}{4 \| \bm{\eta}\|},
\end{equation}
from which we immediately get
\begin{equation}
    \zeta_4(\bm{\sigma})_\mathrm{I} = \frac{\vol_3(\bm{\sigma}\cap \mathbb{T}_4)}{\|\bm{\eta}\|\vol_4 \mathbb{T}_4} = \frac{4}{abcd}.
\end{equation}
Finally, by scale affinity (we have $n_\mathrm{I} = 4$),
\begin{equation}
v_3^{(k+1)}(\bm{\sigma}\cap \mathbb{T}_4) = v_3^{(k+1)}(T_3^{abcd}) = v_3^{(k+1)}(T_3),
\end{equation}
which implies for $k=1,2,3$ that (see Table \ref{tab:Evenkres} and Equation \eqref{Eq:v33T3}),
\begin{equation}
\begin{aligned}
& v_3^{(2)}(\bm{\sigma}\cap \mathbb{T}_4) = \tfrac{3}{4000}, && \quad v_3^{(3)}(\bm{\sigma}\cap \mathbb{T}_4) = \tfrac{733}{12600000}+\tfrac{79 \pi ^2}{2424922500}, && \quad v_3^{(4)}(\bm{\sigma}\cap \mathbb{T}_4) = \tfrac{871}{123480000}.
\end{aligned}
\end{equation}

Putting everything into the integral in Equation \eqref{Eq:SecInt4}, we get when $k=1$,
\begin{equation}
     v_4^{(1)}(\mathbb{T}_4)_\mathrm{I}  = \frac{24}{625} \int_1^\infty   \int_1^\infty  \int_1^\infty  \int_1^\infty  \frac{2+a b c d (a+b+c+d-5)}{a^7 b^7 c^7 d^7} \ddd a \ddd b\ddd c \ddd d = \frac{1}{16875}.
\end{equation}
For $k=3$ and $k=5$, we get
\begin{equation}
\begin{aligned}
   & \textstyle v_4^{(3)}(\mathbb{T}_4)_\mathrm{I} = \frac{26061191}{1600967592000000}, && \qquad \textstyle v_4^{(5)}(\mathbb{T}_4)_\mathrm{I} = \frac{27909940019}{504189521813376000000}.
\end{aligned}   
\end{equation}

\subsubsection{Configuration II}
In this scenario, $\bm{\sigma}$ separates two points $[0,0,0,0]$ and $[0,0,0,1]$ from $\mathbb{T}_4$. By Equation \eqref{Eq:Separ}, we get $a>1$, $b>1$, $c>1$ and $d<1$. We can split the condition for $d$ into to cases: either $0<d<1$ or $d<0$. In fact, both options give the same factor since they are symmetrical as they correspond to two possibilities where $\bm{\sigma}$ hits $\mathcal{A}([0,0,0,0],[0,0,0,1])$. Therefore we only consider the integration half-domain
\begin{equation}
(\mathbb{R}^4\setminus \mathbb{T}_4^\circ)^{*}_\mathrm{II} = (1,\infty)^3 \times (0,1)
\end{equation}
and in the end multiply the result twice. The hyperplane $\bm{\sigma}$ splits $\mathbb{T}_4$ into disjoint union of two domains $\mathbb{T}_4^+\sqcup \mathbb{T}_4^-$, where $\mathbb{T}_4^+$ being the one closer to the origin. Denote
\begin{align}
    \mathbb{T}_4^{abcd} = & \hull\left(\left[0,0,0,0\right],\left[\tfrac{1}{a},0,0,0\right],\left[0,\tfrac{1}{b},0,0\right],\left[0,0,\tfrac{1}{c},0\right],\left[0,0,0,\tfrac{1}{d}\right]\right),\\
    \mathbb{T}_{4*}^{abcd} = & \hull\left(\left[0,0,0,1\right],\left[\tfrac{1-d}{a-d},0,0,\tfrac{a-1}{a-d}\right],\left[0,\tfrac{1-d}{b-d},0,\tfrac{b-1}{b-d}\right],\left[0,0,\tfrac{1-d}{c-d},\tfrac{c-1}{c-d}\right],\left[0,0,0,\tfrac{1}{d}\right]\right),
\end{align}
then we can write $\mathbb{T}_4^+ = \mathbb{T}_{4}^{abcd}\setminus \mathbb{T}_{4*}^{abcd}$ and thus, by inclusion/exclusion
\begin{equation}
\begin{split}
\iota^{(k)}_4(\bm{\sigma})_\mathrm{II}  & = \int_{\mathbb{T}_4} (\bm{\eta}^\top \vect{x}-1)^k \lambda_4(\dd\vect{x}) - (1-(-1)^k) \int_{\mathbb{T}_4^{abcd}} (\bm{\eta}^\top \vect{x}-1)^k \lambda_4(\dd\vect{x})\\
& + (1-(-1)^k) \int_{\mathbb{T}_{4*}^{abcd}} (\bm{\eta}^\top \vect{x}-1)^k \lambda_4(\dd\vect{x}).
\end{split}
\end{equation}
for any $k$ integer. These integrals are again easy to compute. Mathematica Code \ref{code:4diota2} computes $\iota^{(k)}_4(\bm{\sigma})_\mathrm{II}$ for various values of $k$. Running the code for $k=1$ and $k=3$, we obtain
\begin{align}
\iota^{(1)}_4(\bm{\sigma})_{\mathrm{II}} \!=\! \iota^{(1)}_4(\bm{\sigma})_{\mathrm{I}} \!-\! \frac{(1-d)^5}{60 d (a\!-\!d) (b\!-\!d) (c\!-\!d)}, &&
\iota^{(3)}_4(\bm{\sigma})_\mathrm{II} \!=\! \iota^{(3)}_4(\bm{\sigma})_\mathrm{I} \!-\! \frac{(1-d)^7}{420 d (a\!-\!d) (b\!-\!d) (c\!-\!d)}.
\end{align}
where the functions $\iota^{(1)}_4(\bm{\sigma})_{\mathrm{I}}$ and $\iota^{(3)}_4(\bm{\sigma})_{\mathrm{I}}$ are given by Equations \eqref{Eq:iota41} and \eqref{Eq:iota43} from the configuration $\mathrm{I}$. By denoting 
\begin{align}
    T_3^{abcd} = & \hull\left(\left[\tfrac{1}{a},0,0,0\right],\left[0,\tfrac{1}{b},0,0\right],\left[0,0,0,\tfrac{1}{c}\right],\left[0,0,0,\tfrac{1}{d}\right]\right),\\
    T_{3*}^{abcd} = & \hull\left(\left[\tfrac{1-d}{a-d},0,0,\tfrac{a-1}{a-d}\right],\left[0,\tfrac{1-d}{b-d},0,\tfrac{b-1}{b-d}\right],\left[0,0,\tfrac{1-d}{c-d},\tfrac{c-1}{c-d}\right],\left[0,0,0,\tfrac{1}{d}\right]\right),
\end{align}
we have for the intersection of $\bm{\sigma}$ with $\mathbb{T}_4$,
\begin{equation}
\begin{split}
    \bm{\sigma} \cap \mathbb{T}_4 & = T_3^{abcd}\setminus T_{3*}^{abcd} = \hull\big{(}\left[\tfrac{1}{a},0,0,0\right],\left[0,\tfrac{1}{b},0,0\right],\left[0,0,0,\tfrac{1}{c}\right],\\
    & \left[\tfrac{1-d}{a-d},0,0,\tfrac{a-1}{a-d}\right],\left[0,\tfrac{1-d}{b-d},0,\tfrac{b-1}{b-d}\right],\left[0,0,\tfrac{1-d}{c-d},\tfrac{c-1}{c-d}\right]\big{)},
\end{split}
\end{equation}
so $n_\mathrm{II} = 6$. By scale affinity
\begin{equation}
v_3^{(k+1)}(\bm{\sigma}\cap \mathbb{T}_4) = v_3^{(k+1)}(T_3^{abcd}\setminus T_{3*}^{abcd}) = v_3^{(k+1)}(\mathbb{U}_3^{\alpha\beta\gamma}),
\end{equation}
where
\begin{equation*}
\mathbb{U}_3^{\alpha\beta\gamma} = \mathbb{T}_3 \setminus \mathbb{T}_3^{\frac{1}{\alpha},\frac{1}{\beta},\frac{1}{\gamma}}= \hull([0,0,0],[1,0,0],[0,1,0],[0,0,1])\setminus\hull([0,0,0],[\alpha,0,0],[0,\beta,0],[0,0,\gamma])
\end{equation*}
is a canonical truncated tetradedron with
\begin{equation}\label{Eq:AlphaBetaGammaTrans}
    \alpha = \frac{a (1-d)}{a-d}, \qquad \beta = \frac{b (1-d)}{b-d}, \qquad \gamma = \frac{c (1-d)}{c-d}.
\end{equation}
See Figure \ref{fig:secT4ConII} below for an illustration of $\mathbb{U}_3^{\alpha\beta\gamma}$ and its volumetric moments.
\begin{figure}[H]
    \centering     \includegraphics[width=0.30\textwidth]{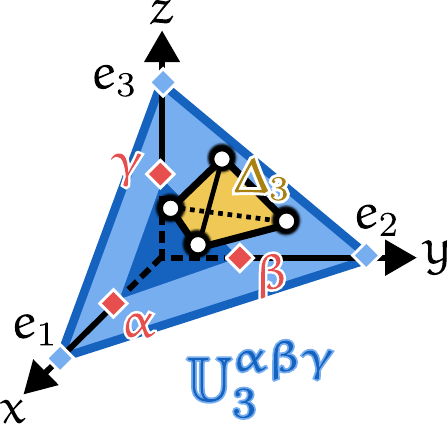}
    \caption{Mean section moments in the second $\mathcal{C}(\mathbb{T}_4)$ configuration}
    \label{fig:secT4ConII}
\end{figure}

Since $\vol_3 \mathbb{U}_3^{\alpha\beta\gamma} = \tfrac{1}{3!}(1-\alpha\beta\gamma)$, we can write in general,
\begin{equation}
v_3^{(k+1)}(\mathbb{U}_3^{\alpha\beta\gamma}) = \left(\frac{6}{1-\alpha\beta\gamma}\right)^{k+5} \int_{(\mathbb{U}_3^{\alpha\beta\gamma})^4} \Delta_3^{k+1} \ddd \vect{x}_0\dd \vect{x}_1 \dd \vect{x}_2\dd\vect{x}_3,
\end{equation}
We would like to find $v_3^{(k+1)}(\mathbb{U}_3^{\alpha\beta\gamma})$ for odd $k$. This is, luckily, trivial, since we are now integrating even powers of
\begin{equation}
\Delta_3 = \frac{1}{3!}\left|\det(\vect{x}_1-\vect{x}_0\,|\,\vect{x}_2-\vect{x}_0\,|\,\vect{x}_3-\vect{x}_0)\right|.
\end{equation}
The calculation can be carried out in Mathematica using Code \ref{code:4dvII}, which exploits the symmetries and uses inclusion/exclusion. Running the code for $k=1$, we get
\begin{equation}
    v_3^{(2)}(\mathbb{U}_3^{\alpha\beta\gamma}) = \frac{3\begin{Bmatrix}
    \!1\!+\!9 \alpha ^4 \beta ^2 \gamma ^2\!-\!16 \alpha ^5 \beta ^5 \gamma ^5\!+\!10 \alpha ^5 \beta ^5 \gamma ^4\!+\!10 \alpha ^5 \beta^4 \gamma ^5\!-\!4 \alpha ^5 \beta ^5 \gamma ^3\!-\!2 \alpha ^5 \beta ^4 \gamma ^4\!\\
    \!-\!10 \alpha ^2 \beta ^4 \gamma ^3\!+\!10 \alpha ^4 \beta ^5 \gamma ^5\!-\!10\alpha ^2 \beta ^3 \gamma ^4\!+10 \alpha ^2 \beta  \gamma\!+\!9 \alpha ^4 \beta ^4 \gamma ^2\!-\!10 \alpha ^3 \beta ^2 \gamma ^4\!\\
    \!-\!10 \alpha ^4 \beta ^3 \gamma ^2\!+\!9 \alpha ^4 \beta^2 \gamma ^4\!-\!10 \alpha ^4 \beta ^2 \gamma ^3\!-\!10\alpha ^3 \beta ^4 \gamma ^2\!-\!4 \alpha ^3 \beta ^5 \gamma ^5\!+\!2 \alpha ^3 \beta ^4 \gamma ^3\!\\
    \!-\!2\alpha ^4 \beta ^4 \gamma ^5\!+\!2 \alpha ^3 \beta ^3 \gamma ^4\!+\!2 \alpha ^3 \beta ^3 \gamma ^2\!+\!2 \alpha ^4 \beta ^3 \gamma ^3\!+\!2 \alpha ^3 \beta^2 \gamma ^3\!-\!4 \alpha ^3 \beta  \gamma\! +\!\alpha ^6 \beta ^6 \gamma ^6\!\\
    \!-\!2 \alpha ^2 \beta \gamma ^2 \!-\!4 \alpha  \beta ^3 \gamma\!-\!2 \alpha  \beta ^2 \gamma ^2\!+\!2 \alpha ^2 \beta ^3 \gamma ^3\!+\!9 \alpha ^2 \beta ^4 \gamma ^4\!+\!9 \alpha ^2 \beta ^2 \gamma ^4\!-\!2 \alpha ^2 \beta ^2 \gamma \!\\
    \!-\!4 \alpha ^5 \beta ^3 \gamma ^5\!+\!9 \alpha ^2 \beta ^4 \gamma ^2\!-\!2 \alpha ^4 \beta ^5 \gamma ^4\!+\!10 \alpha  \beta ^2 \gamma \!-\!4 \alpha  \beta \gamma ^3\!+\!10 \alpha  \beta  \gamma ^2\!-\!16 \alpha  \beta  \gamma \!    
    \end{Bmatrix}}{4000(1-\alpha  \beta \gamma )^6},
\end{equation}

Finally, by definition (alternatively by Equation \eqref{Eq:ZetaEta})
\begin{equation}
    \zeta_4(\bm{\sigma})_\mathrm{II} = \frac{\vol_3(\bm{\sigma}\cap \mathbb{T}_4)}{\| \bm{\eta}\| \vol_4 \mathbb{T}_4} = (1-\alpha\beta\gamma)\frac{\vol_3 T_3^{abc}}{\| \bm{\eta}\| \vol_4 \mathbb{T}_4} = (1-\alpha\beta\gamma) \zeta_4(\bm{\sigma})_\mathrm{I}.
\end{equation}
Before we proceed to evaluate the final integral, we make the following change of variables $(a,b,c,d) \to (\alpha,\beta,\gamma,d)$ via transformation Equations \eqref{Eq:AlphaBetaGammaTrans}, which transform the integration half-domain into
\begin{equation}
(\mathbb{R}^4\setminus \mathbb{T}_4^\circ)^{*}_\mathrm{II}\,|_{\alpha,\beta\gamma,d} = (1-d,1)^3 \times (0,1).
\end{equation}
Note that, if $d$ is treated as a parameter, the variables $a,b,c$ depend on $\alpha,\beta,\gamma$ separately. As a consequence,
\begin{equation}
    \dd a = \frac{d(1-d) \ddd \alpha}{(1-d-\alpha)^2}, \qquad \dd b = \frac{d(1-d) \ddd \beta}{(1-d-\beta)^2}, \qquad \dd c = \frac{d(1-d) \ddd \gamma}{(1-d-\alpha)^2}
\end{equation}
and thus one has for the of transformation of measure
\begin{equation}
\lambda_4(\dd \bm{\eta}) = \dd a \ddd b  \ddd c \ddd d = \frac{d^3(1-d)^3 \ddd \alpha  \ddd \beta\ddd\gamma\ddd d}{(1-d-\alpha)^2(1-d-\beta)^2(1-d-\gamma)^2}.
\end{equation}
Putting everything into the integral in Equation \eqref{Eq:SecInt4}, we get when $k=1$ and after integrating out $\alpha, \beta, \gamma$,
\begin{equation}
\begin{split}
& v_4^{(1)}(\mathbb{T}_4)_\mathrm{II} = \frac{1}{1406250} \int_0^1 \big{(} d^3 p_0+180 d^2 (1-d)^3 p_1 \ln (1-d)\\
& +10800 d (1-d)^3 p_2 \ln ^2(1-d)+216000 (1-d)^3 p_3 \ln ^3(1-d) \big{)} \frac{\dd d}{d^{25}},
\end{split}
\end{equation}
where
\bgroup
\allowdisplaybreaks
\begin{align}
\begin{split}
  p_0     & = 32480784000-324807840000 d+1556229024000 d^2-4749037776000 d^3\\
  & +10279357367400 d^4-16555175611200 d^5+20253161331700 d^6-18987688381900 d^7\\
  &+13740024940130 d^8-7798431753680 d^9+3604300565845 d^{10}-1440768739775 d^{11}\\
  &+518639866862 d^{12}-161581999478 d^{13}+39317696413 d^{14}-6685392751 d^{15}\\
  & +700753210 d^{16}-34837616 d^{17} +6112 d^{18}-3272 d^{19}+784 d^{20},
\end{split}
\\[2ex]
\begin{split}
p_1  & = 541346400-4060098000 d+14794437000 d^2-34585687500 d^3\!+\!56747312360 d^4\!\\
&-\!67139592080 d^5\!+\!57686267770 d^6\!-\!36408101115 d^7+17574730626 d^8-7114914681 d^9\\
&+\!2659305113 d^{10}\!-\!888330365 d^{11}\!+\!229856455 d^{12}\!-\!40385468 d^{13}\!+\!4279933 d^{14}\!-\!213224 d^{15},
\end{split}\\[2ex]
\begin{split}
p_2  & = 9022440\!-\!72179520 d\!+\!279656230 d^2\!-\!694452010 d^3\!+\!1216036193 d^4-1552509188 d^5\\
&+1460599749 d^6-1021377960 d^7+544097150 d^8-234903968 d^9+90292498d^{10}\\
&-32050399 d^{11}+9632345 d^{12}-2161105 d^{13}+327799 d^{14}-30254 d^{15}+1312 d^{16},
\end{split}
\\[2ex]
\begin{split}
p_3    & = 150374-1278179 d+5249902 d^2-13810685 d^3+25712115 d^4-\!35209551 d^5\!\\
&+\!35968805 d^6\!-\!27633760 d^7\!+\!16221440 d^8\!-\!7575685 d^9+3035423 d^{10}-1117957d^{11}\\
&+369741 d^{12}-99030 d^{13}+19440 d^{14}-2588 d^{15}+211 d^{16}-8 d^{17}.
\end{split}
\end{align}
\egroup
The last $d$ integration can be carried out by Mathematica (or tediously using Beta function derivatives). We get
\begin{equation}
    v_4^{(1)}(\mathbb{T}_4)_\mathrm{II} = \frac{89}{270000}-\frac{2173 \pi ^2}{520269750}.
\end{equation}
For higher values of $k$, the integration possesses similar difficulty, we got
\begin{equation}
    \begin{split}
   v_4^{(3)}(\mathbb{T}_4)_\mathrm{II} & = \textstyle\frac{3947568673}{80048379600000000}+\frac{63065881 \pi ^2}{396699961407750000},\\[1ex]
   v_4^{(5)}(\mathbb{T}_4)_\mathrm{II} & = \textstyle\frac{700536944899}{7058653305387264000000}-\frac{1262701803371 \pi ^2}{35570432728713733250400000}.
    \end{split}
\end{equation}

\subsubsection{Contribution from all configurations}
By Equation \eqref{Eq:DecoCon} and by affine invariancy,
\begin{equation}
v_4^{(k)}(T_4) = \sum_{C \in \mathcal{C}(T_4)} w_C \, v_4^{(k)}(T_4)_C = 5 v_4^{(k)}(\mathbb{T}_4)_{\mathrm{I}} + 10v_4^{(k)}(\mathbb{T}_4)_{\mathrm{II}},
\end{equation}
from which immediately
\begin{align}
   v_4^{(1)}(T_4) & = \frac{97}{27000}-\frac{2173 \pi ^2}{52026975} \approx 0.0031803708487,\\[2ex]
   v_4^{(3)}(T_4) & = \frac{1955399}{3403417500000}+\frac{63065881 \pi ^2}{39669996140775000} \approx 5.9023\cdot 10^{-7}, \\[2ex]
   v_4^{(5)}(T_4) & = \textstyle\frac{12443146181}{9803685146371200000}-\frac{1262701803371 \pi ^2}{3557043272871373325040000} \approx 1.26573\cdot 10^{-9}.
\end{align}
Monte-Carlo simulation shows that the value $v_4^{(1)}(T_4)$ fits withing the $95\%$ confidence interval $(0.00318034, 0.00318043)$ obtained from $4\times 10^{10}$ trials of randomly generated $4$-simplices in $T_4$.

\vspace{1em}
Moreover, by Buchta's relation (Equation \eqref{Eq:Buchta}), we get the value of mean $4$-volume of a convex hull of $6$ points in the unit pentachoron as
\begin{equation}
    v_5^{(1)}(T_4) = 3 v_4^{(1)}(T_4) = \frac{97}{9000}-\frac{2173 \pi ^2}{17342325} \approx 0.00954111.
\end{equation}

\subsection{Hexadecachoron first volumetric moment}
The \emph{hexadecachoron}\index{hexadecachoron} or a $16$-cell\index{16-cell} are alternative names of $4$-orthoplex\index{4-orthoplex} $O_4$, a polychoron with standard representation with $\vol_4 O_4 = 2/3$,
\begin{equation}\label{Eq:4orthodef}
O_4 = \hull([1,\!0,\!0,\!0],\![0,\!1,\!0,\!0][0,\!0,\!1,\!0][0,\!0,\!0,\!1],\![-1,\!0,\!0,\!0],\![0,\!-1,\!0,\!0][0,\!0,\!-1,\!0][0,\!0,\!0,\!-1]).
\end{equation}
The symmetry group $\mathcal{G}(O_4)$ is isomorphic to Coxeter group $\mathcal{B}_4$ of order $|\mathcal{B}_4| = 384$. We can describe the symmetry group using its four generators (one reflection, two rotations and one double rotation) of permutations acting on vertices indexed as in Equation \eqref{Eq:4orthodef}. In cycle notation\index{cycle notation} (excluding fixed points), we have
\begin{equation}
   \mathcal{G}(O_4) = \langle (48),(2367),(1256),(1256)(3478)
   \rangle < \mathcal{S}_8,
\end{equation}
where $\langle \cdot \rangle$ denotes the \emph{algebraic closure}\index{algebraic closure} and $<$ the relation of being a subgroup. From this group, we can generate $14$ configurations, out of which only $4$ are realisable and section equivalent. These consist the genealogy $\mathcal{C}(C_4)$. Table \ref{tab:4orthovol} shows specifically which sets $S$ of vertices in which configurations are separated by a cutting plane $\bm{\sigma}$ in our standard representation of $O_4$ in Equation \eqref{Eq:4orthodef}.
\begin{table}[htb]
    \centering
\begin{tabular}{|c|c|c|c|c|}
\hline
 $C$ & $\mathrm{I}$ & $\mathrm{II}$ & $\mathrm{III}$ & $\mathrm{IV}$ \\
 \hline
 $S$ & $[0,0,0,1]$ & \begin{tabular}{c} $[0,0,1,0]$ \\ $[0,0,0,1]$ \end{tabular} & \begin{tabular}{c} $[0,1,0,0]$\\ $[0,0,1,0]$\\ $[0,0,0,1]$ \end{tabular} & \begin{tabular}{c} $[1,0,0,0]$\\ $[0,1,0,0]$\\ $[0,0,1,0]$\\ $[0,0,0,1]$ \end{tabular} \\
 \hline 
 $w_C$ & $8$ & $24$ & $32$ & $16$\\
 $n_C$ & $6$ & $10$ & $12$ & $0$\\
 \hline
\end{tabular}
    \caption{Configurations $\mathcal{C}(O_4)$ in the standard representation of $O_4$.}
    \label{tab:4orthovol}
\end{table}
By similar treatment as in the case of $O_3$, we can easily find inequalities which describe $O_4^+$ and thus $\bm{\sigma}\cap O_4^+$. We only list the section integrals obtained from all configurations, see Table \ref{tab:4orthoAll}.
\begin{table}[H]
    \centering
\begin{tabular}{|c|c|}
\hline
\ru{1.1} $C$ & $v_4^{(1)}(O_4)_C$ \\
 \hline
 \ru{1.1}$\mathrm{I}$ & $\frac{2400441939 \zeta (3)}{320000}-\frac{71765769458062825751339}{8136689713152000000}+\frac{173050612310219 \pi^2}{3547315200000}-\frac{127327345788535137 \ln 2}{130068224000000}$ \\[0.6ex]
 \hline 
 \ru{1.1}$\mathrm{II}$ & $\frac{11577920188509587165389181}{2072472081039360000000}-\frac{13611484420925379 \zeta
   (3)}{2928808960000}+\frac{6998756
   6888072781151 \pi ^2}{1461358518681600000}-\frac{71866300533 \pi ^2 \ln 2}{1040060000}$ \\[0.6ex]
 \hline
 \ru{1.1}$\mathrm{III}$ & (not yet derived) \\[0.6ex]
 \hline
 \ru{1.1}$\mathrm{IV}$ & (not yet derived) \\[0.6ex]
 \hline
\end{tabular}
    \caption{Sections integrals in various configurations $\mathcal{C}(O_4)$.}
    \label{tab:4orthoAll}
\end{table}

\begin{remark}
As of now, we have not found the expressions for $v_4^{(1)}(O_4)_C$ for configurations $C\in \{\mathrm{III,IV}\}$, we have succeeded in writing them as explicit double integrals, but the shear scope of them have not enabled us to calculate using our own computers. However, we think this might be doable and will be part of our future papers. We have also attempted to find higher odd moments, however, the section integrals became too complicated. The third and the fifth moment are in principle derivable but it would be extraordinarily time consuming. We found at least in the first configuration
\begin{equation}
\begin{split}
    v_4^{(3)}(O_4)_\mathrm{I} & = \textstyle\frac{8928188080691679 \zeta
   (3)}{7867596800000}-\frac{13757679936170496961418065762637875149511}{10097679414187456780038045696000000000} \\[-0.2ex]
 & \textstyle+\frac{420783881199433246283869 \pi ^2}{1357358340088791040000000}-\frac{138200770459501589499358193329 \ln 2}{20380735476433197465600000000}
\end{split}    
\end{equation}
\end{remark}

\subsection{Tesseract odd volumetric moments}
By \emph{tesseract}\index{tesseract}, we mean $C_4$ ($4$-cube)\index{4-cube}. The standard representation of the unit tesseract with $\vol_4 C_4 = 1$ is
\begin{equation}\label{Eq:4cubedef}
\begin{split}
    C_4  = \hull(& [0,0,0,0],\![1,0,0,0],\![0,1,0,0],\![0,0,1,0],\![0,0,0,1],\![1,1,0,0],\![1,0,1,0],\![1,0,0,1],\\
    & [0,1,1,0],\![0,1,0,1],\![0,0,1,1],\![1,1,1,0],[1,1,0,1],\![1,0,1,1],\![0,1,1,1],\![1,1,1,1]).
\end{split}
\end{equation}
The symmetry group $\mathcal{G}(C_4)$ is isomorphic to Coxeter group $\mathcal{B}_4$ of order $|\mathcal{B}_4| = 384$. We can describe the symmetry group using its four generators (one reflection, two rotations and one double rotation) of permutations acting on vertices indexed as in Equation \eqref{Eq:4cubedef}. In cycle notation\index{cycle notation}, we have
\begin{equation}
\begin{split}
   \mathcal{G}(C_4) = \langle & (1,5)(2,8)(3,10)(4,11)(6,13)(7,14)(9,15)(12,16)(1,3,9,4)(2,6,12,7)\\
   & (5,10,15,11)(8,13,16,14)(1,2,6,3)(4,7,12,9)(5,8,13,10)\\
   & (11,14,16,15)(1,7,16,10)(2,12,15,5)(3,4,14,13)(6,9,11,8)
   \rangle < \mathcal{S}_{16}.
\end{split}
\end{equation}
From this group, we can generate $402$ configurations, out of which $14$ are realisable and section equivalent. These consist the genealogy $\mathcal{C}(C_4)$. Table \ref{tab:Tessvol} shows specifically which sets $S$ of vertices in which configurations are separated by a cutting plane $\bm{\sigma}$ in our standard representation of $C_4$ in Equation \eqref{Eq:4cubedef}.

\begin{table}[!tbh]
    \centering
\begin{tabular}{|c|c|c|c|c|c|c|c|}
\hline
 \!\!$C$\!\! & $\mathrm{I}$ & $\mathrm{II}$ & $\mathrm{III}$ & $\mathrm{IV}$ & $\mathrm{V}$ & $\mathrm{VI}$ & $\mathrm{VII}$\\
 \hline
 \!\!$S$\!\! & $[0,0,0,0]$ & \begin{tabular}{c} $[0,0,0,0]$ \\ $[1,0,0,0]$ \end{tabular} & \begin{tabular}{c} $[0,0,0,0]$\\ $[1,0,0,0]$\\ $[0,1,0,0]$ \end{tabular} & \begin{tabular}{c} $[0,0,0,0]$\\ $[1,0,0,0]$\\ $[0,1,0,0]$\\ $[1,1,0,0]$ \end{tabular} & \begin{tabular}{c} $[0,0,0,0]$\\ $[1,0,0,0]$\\ $[0,1,0,0]$\\ $[0,0,1,0]$ \end{tabular} & \begin{tabular}{c} $[0,0,0,0]$\\ $[1,0,0,0]$\\ $[0,1,0,0]$\\ $[0,0,1,0]$ \\ $[0,0,0,1]$ \end{tabular} & \begin{tabular}{c} $[0,0,0,0]$\\ $[1,0,0,0]$\\ $[0,1,0,0]$\\ $[0,0,1,0]$ \\ $[1,1,0,0]$ \end{tabular}\\
 \hline 
 \!\!$w_C$\!\! & $16$ & $32$ & $96$ & $24$ & $64$ & $16$ & $192$\\
 \!\!$n_C$\!\! & $4$ & $6$ & $8$ & $8$ & $10$ & $12$ & $10$\\
 \hline
 \hline
\!\!$C$\!\! & $\mathrm{VIII}$ & $\mathrm{IX}$ & $\mathrm{X}$ & $\mathrm{XI}$ & $\mathrm{XII}$ & $\mathrm{XIII}$ & $\mathrm{XIV}$\\
 \hline
 \!\!$S$\!\! & \!\begin{tabular}{c} $[0,0,0,0]$\\ $[1,0,0,0]$\\ $[0,1,0,0]$\\ $[0,0,1,0]$ \\ $[1,0,1,0]$ \\ $[1,1,0,0]$ \end{tabular}\!\!\!\! & \!\!\!\!\begin{tabular}{c} $[0,0,0,0]$\\ $[1,0,0,0]$\\ $[0,1,0,0]$\\ $[0,0,1,0]$ \\ $[0,0,0,1]$ \\ $[1,1,0,0]$ \end{tabular}\!\!\!\! & \!\!\!\!\begin{tabular}{c} $[0,0,0,0]$\\ $[1,0,0,0]$\\ $[0,1,0,0]$\\ $[0,0,1,0]$ \\ $[0,1,1,0]$ \\ $[1,0,1,0]$ \\ $[1,1,0,0]$ \end{tabular}\!\!\!\! & \!\!\!\!\begin{tabular}{c} $[0,0,0,0]$\\ $[1,0,0,0]$\\ $[0,1,0,0]$\\ $[0,0,1,0]$ \\ $[0,0,0,1]$ \\ $[1,0,1,0]$ \\ $[1,1,0,0]$ \end{tabular}\!\!\!\! & \!\!\!\!\begin{tabular}{c} $[0,0,0,0]$\\ $[1,0,0,0]$\\ $[0,1,0,0]$\\ $[0,0,1,0]$ \\$[0,1,1,0]$\\ $[1,0,1,0]$\\ $[1,1,0,0]$\\ $[1,1,1,0]$ \end{tabular}\!\!\!\! & \!\!\!\!\begin{tabular}{c} $[0,0,0,0]$\\ $[1,0,0,0]$\\ $[0,1,0,0]$\\ $[0,0,1,0]$ \\$[0,0,0,1]$\\ $[1,1,0,0]$\\ $[1,0,1,0]$\\ $[1,0,0,1]$ \end{tabular}\!\!\!\! & \!\!\!\!\begin{tabular}{c} $[0,0,0,0]$\\ $[1,0,0,0]$\\ $[0,1,0,0]$\\ $[0,0,1,0]$ \\$[0,0,0,1]$\\ $[0,1,1,0]$\\ $[1,0,1,0]$\\ $[1,1,0,0]$ \end{tabular}\!\!\!\!\\
 \hline 
 \!\!$w_C$\!\! & $96$ & $96$ & $64$ & $192$ & $4$ & $32$ & $64$\\
 \!\!$n_C$\!\! & $10$ & $12$ & $10$ & $12$ & $8$ & $12$ & $12$\\
 \hline
\end{tabular}
    \caption{Configurations $\mathcal{C}(C_4)$ in the standard representation of $C_4$.}
    \label{tab:Tessvol}
\end{table}

By similar treatment as in the case of $O_4$, we can easily find inequalities which describe $C_4^+$ and thus $\bm{\sigma}\cap C_4^+$. We only list the section integrals obtained from all configurations, see Table \ref{tab:4cubeAll}. Also, for brevity, we only enlist the first volumetric moment, although we found also $v_4^{(3)}(C_4)_C$ for all configurations. For example $v_4^{(3)}(C_4)_\mathrm{I} = \frac{573495143}{783231158555529707520000}$. It turns out the last configuration $\mathrm{XIV}$ is tricky to integrate. In the end, one has to use the identity involving \emph{trilogarithms}\index{trilogarithm} found (rediscovered) by Shobhit Bhatnagar \cite{trilogLi}, the identity states that
\begin{equation}
\operatorname{Li}_3\left(-\frac{1}{3}\right)-2 \operatorname{Li}_3\left(\frac{1}{3}\right) = -\frac{\ln^3 3}{6}+\frac{\pi^2}{6}\ln 3-\frac{13\zeta(3)}{6}.
\end{equation}.
\begin{table}[H]
    \centering
\begin{tabular}{|c|c|}
\hline
\ru{1.1} $C$ & $v_4^{(1)}(C_4)_C$ \\
 \hline
 \ru{1.1}$\mathrm{I}$ & $\frac{65598041}{3386742443900928000000}$ \\[0.6ex]
 \hline 
 \ru{1.1}$\mathrm{II}$ & $\frac{102608713871}{3292649334374400000}$ \\[0.6ex]
 \hline
 \ru{1.1}$\mathrm{III}$ & $\frac{256081766015430731}{345728180109312000000}-\frac{6302191 \pi ^2}{83980800000}$ \\[0.6ex]
 \hline
 \ru{1.1}$\mathrm{IV}$ & $\frac{7383631}{1862358220800}$ \\[0.6ex]
 \hline
 \ru{1.1}$\mathrm{V}$ & $\frac{74369 \zeta (3)}{92160000}-\frac{15427192177655450593}{2304854534062080000000}+\frac{31318807 \pi
   ^2}{149299200000}+\frac{482072643302197 \ln 2}{91462481510400000}$ \\[0.6ex]
 \hline
 \ru{1.1}$\mathrm{VI}$ & $-\frac{1663466629 \zeta (3)}{622080000}-\frac{210954160717218293347879}{6338349968670720000000}-\frac{133847 \pi
   ^2}{124416000}+\frac{2007170664939114317 \ln 2}{38109367296000000}$ \\[0.6ex]
 \hline
 \ru{1.1}$\mathrm{VII}$ & $\frac{388451 \zeta (3)}{29859840}+\frac{596684331816745397}{29933175767040000000}+\frac{4354897 \pi
   ^2}{1343692800000}-\frac{23489337302150729 \ln 2}{457312407552000000}$ \\[0.6ex]
 \hline
 \ru{1.1}$\mathrm{VIII}$ & $\frac{188122446351063331}{10975497781248000000}-\frac{1170683 \pi ^2}{671846400}+\frac{221036483033 \ln 2}{2494431313920000}$ \\[0.6ex]
 \hline
 \ru{1.1}$\mathrm{IX}$ & $-\frac{618197167 \zeta (3)}{1866240000}+\frac{373791108546507725849549}{38030099812024320000000}+\frac{74238971 \pi
   ^2}{671846400000}-\frac{1333435310218723619 \ln 2}{97995515904000000}$ \\[0.6ex]
 \hline
 \ru{1.1}$\mathrm{X}$ & $\frac{2274497329 \zeta (3)}{69120000}-\frac{21609245552433862937}{4390199112499200000}-\frac{1523317655658026279 \ln
2}{30487493836800000}$ \\[0.6ex]
 \hline
 \ru{1.1}$\mathrm{XI}$ & $\frac{24570427 \zeta (3)}{55296000}-\frac{157440595529232693016981}{76060199624048640000000}+\frac{47205929 \pi
   ^2}{24883200000}+\frac{3002774140883958709 \ln 2}{1371937222656000000}$ \\[0.6ex]
 \hline
 \ru{1.1}$\mathrm{XII}$ & $\frac{17}{311040}$ \\[0.6ex]
 \hline
 \ru{1.1}$\mathrm{XIII}$ & $\frac{746581063847040871}{6602447884032000000}-\frac{641346209 \pi ^2}{55987200000}$ \\[0.6ex]
 \hline
 \ru{1.1}$\mathrm{XIV}$ & $-\frac{41203109797 \zeta (3)}{622080000}+\frac{10605967272168022814803}{1152427267031040000000}-\frac{12193153 \pi
   ^2}{27993600000}+\frac{4645960252158518597 \ln 2}{45731240755200000}$ \\[0.6ex]
 \hline
\end{tabular}
    \caption{Sections integrals in various configurations $\mathcal{C}(C_4)$.}
    \label{tab:4cubeAll}
\end{table}

By Equation \eqref{Eq:DecoCon}, considering the contributions from all configurations,
\begin{equation}
\begin{split}
v_4^{(k)}(C_4) & = \!\!\! \sum_{C \in \mathcal{C}(C_4)} \!\! w_C \, v_4^{(k)}(C_4)_C = \! 16 v_4^{(k)}(C_4)_{\mathrm{I}} \! + \! 32v_4^{(k)}(C_4)_{\mathrm{II}} \! + \! 96v_4^{(k)}(C_4)_{\mathrm{III}}  \!+\! 24v_4^{(k)}(C_4)_{\mathrm{IV}} \\
& \!+\! 64v_4^{(k)}(C_4)_{\mathrm{V}} + 16v_4^{(k)}(C_4)_{\mathrm{VI}} + 192v_4^{(k)}(C_4)_{\mathrm{VII}} + 96v_4^{(k)}(C_4)_{\mathrm{VIII}} + 96v_4^{(k)}(C_4)_{\mathrm{IX}}\\
&  + 64v_4^{(k)}(C_4)_{\mathrm{X}} + 192v_4^{(k)}(C_4)_{\mathrm{XI}} + 4v_4^{(k)}(C_4)_{\mathrm{XII}} + 32v_4^{(k)}(C_4)_{\mathrm{XIII}} + 64v_4^{(k)}(C_4)_{\mathrm{XIV}},
\end{split}
\end{equation}
from which immediately
\begin{align}
   \!\!\!v_4^{(1)}(C_4) & \!=\! \textstyle\frac{31874628962521753237}{1058357013719040000000}-\frac{26003 \pi^2}{1399680000}+\frac{610208 \ln 2}{1913625} -\frac{536557 \zeta (3)}{2592000}\approx 0.00212952943564458\\
   \!\!\!v_4^{(3)}(C_4) & \!= \!\textstyle\frac{19330626155629115959}{1682723192209145856000000}\!-\!\frac{52276897 \pi^2}{216801070940160000}\!+\!\frac{10004540239 \ln 2}{77977156950000}\!-\!\frac{6155594561 \zeta (3)}{73741860864000}\!\approx\! 7.5157\!\cdot\! 10^{-8}.\!\!
\end{align}

\FloatBarrier

\subsection{Hexateron odd volumetric moments}
By the \emph{hexateron}\index{hexateron}, we mean $T_5$ ($5$-simplex\index{5-simplex}). By affine invariancy, we may consider
\begin{equation}
    \mathbb{T}_5 = \hull(\vect{0},\vect{e}_1,\vect{e}_2,\vect{e}_3,\vect{e}_4,\vect{e}_5)
\end{equation}
with configurations and $\mathcal{C}(T_5)$ weights given by Table \ref{tab:Hexater}.

\begin{table}[H]
    \centering
\begin{tabular}{|c|c|c|c|}
\hline
 $C$ & $\mathrm{I}$ & $\mathrm{II}$ & $\mathrm{III}$\\
 \hline
 $S$ & $[0,0,0,0,0]$ & \begin{tabular}{c} $[0,0,0,0,0]$ \\ $[0,0,0,0,1]$ \end{tabular} & \begin{tabular}{c} $[0,0,0,0,0]$ \\ $[0,0,0,1,0]$ \\ $[0,0,0,0,1]$ \end{tabular}\\
 \hline 
 $w_C$ & $6$ & $15$ & $10$\\
 \hline
\end{tabular}
    \caption{Configurations $\mathcal{C}(\mathbb{T}_5)$ in a local representation with $\mathcal{C}(T_5)$ weights.}
    \label{tab:Hexater}
\end{table}

By Theorem \ref{Thm:Canon} and for any $C \in \mathcal{C}(\mathbb{T}_5)$,
\begin{equation}\label{Eq:SecIntHexa}
v_5^{(k)}(\mathbb{T}_5)_C = \frac{24}{ 5^k}\int_{(\mathbb{R}^5\setminus \mathbb{T}_5^\circ)_C} v_4^{(k+1)}(\bm{\sigma}\cap \mathbb{T}_5) \,\zeta_5^{k+6}(\bm{\sigma}) \iota^{(k)}_5(\bm{\sigma}) \lambda_5(\dd \bm{\eta}),
\end{equation}
where
\begin{equation}
    \zeta_5(\bm{\sigma}) = \frac{\vol_4(\bm{\sigma}\cap \mathbb{T}_5)}{\|\bm{\eta}\|\vol_5 \mathbb{T}_5}, \qquad \iota^{(k)}_5(\bm{\sigma}) = \int_{\mathbb{T}_5} |\bm{\eta}^\top \vect{x}-1|^k \lambda_5(\dd\vect{x}).
\end{equation}
Configurations $\mathrm{I}$ and $\mathrm{II}$ are analogous to the first two configurations of $\mathbb{T}_3$ and $\mathbb{T}_4$, we have $n_\mathrm{I} = 5$ and $n_\mathrm{II} = 2n_\mathrm{I}-2 = 8$ (truncated 4-simplex). The last configuration $\mathrm{III}$, for which we have $n_\mathrm{III} = 9$, has no analogue in lower dimensions. However, by similar procedure as before, we obtained contributions from all configurations, see Table \ref{tab:HexaAll}.
\begin{table}[H]
    \centering
\begin{tabular}{|c|c|}
\hline
 $C$ & $v_5^{(1)}(\mathbb{T}_5)_C$ \\
 \hline
 \ru{1.1}$\mathrm{I}$ & $\frac{5}{2722734}$ \\[0.6ex]
 \hline 
 \ru{1.1}$\mathrm{II}$ & $\frac{12732911}{653456160000}-\frac{1394234873 \pi ^2}{3353951824423200}+\frac{1622 \pi ^4}{2707566616755}$ \\[0.6ex]
 \hline
 \ru{1.1}$\mathrm{III}$ & $\frac{146034151}{3920736960000}-\frac{3546684881 \pi ^2}{3353951824423200}+\frac{4904 \pi ^4}{386795230965}$ \\[0.6ex]
 \hline
\hline
 $C$ & $v_5^{(3)}(\mathbb{T}_5)_C$ \\
 \hline
 \ru{1.1}$\mathrm{I}$ & $\frac{9097367105}{359796813461446459392}$ \\[0.6ex]
 \hline 
 \ru{1.1}$\mathrm{II}$ & $\frac{25351944803581}{245954852952160665600000}+\frac{204046383487590493 \pi ^2}{98081004264127779106308096000}+\frac{13583435573 \pi
   ^4}{17098021963979168381769600}$ \\[0.6ex]
 \hline
 \ru{1.1}$\mathrm{III}$ & $\frac{173514729599507}{874506143829904588800000}-\frac{12027338819078269 \pi ^2}{9341048025155026581553152000}+\frac{1191143596913 \pi
   ^4}{11398681309319445587846400}$ \\[0.6ex]
 \hline
\end{tabular}
    \caption{Sections integrals in various configurations $\mathcal{C}(\mathbb{T}_5)$.}
    \label{tab:HexaAll}
\end{table}
As a consequence, summing up the contributions from all configurations and by affine invariancy,
\begin{equation}
v_5^{(k)}(T_5) = \sum_{C \in \mathcal{C}(T_5)} w_C \, v_5^{(k)}(T_5)_C = 6 v_5^{(k)}(\mathbb{T}_5)_{\mathrm{I}} + 15v_5^{(k)}(\mathbb{T}_5)_{\mathrm{II}} + 10v_5^{(k)}(\mathbb{T}_5)_{\mathrm{III}},
\end{equation}
from which immediately
\begin{align}
    v_5^{(1)}(T_5) & = \frac{2207}{3265920}-\frac{244129 \pi ^2}{14522729760}+\frac{73522 \pi ^4}{541513323351} \approx 0.00052308272,\\
    v_5^{(3)}(T_5) & = \textstyle\frac{362173019}{98363448852480000}+\frac{10217818563857 \pi ^2}{557436796045056999751680}+\frac{602363516243 \pi^4}{569934065465972279392320} \approx 3.96585\cdot10^{-9}.
\end{align}

\begin{remark}
Higher volumetric moments are difficult to compute. For the fifth moment, we would need $v_5^{(5)}(\mathbb{T}_5)_\mathrm{III}$. However, even $v_5^{(3)}(\mathbb{T}_5)_\mathrm{III}$ was already extremely difficult to compute (the file we worked with exceeded 1GB of storage memory). The intricacy of the third configuration stems partly from its asymmetry and from lacking the decoupling substitution $(a \to \alpha, b \to \beta,c\to \gamma, d \to \delta)$, which we found in the second configuration of $T_4$ (and which generalises as well into higher dimensions) and which enables us to integrate out $\alpha,\beta,\gamma,\delta$ immediately. We have not attempted to obtain the fifth moment, such calculation is surely within our grasp but the shear monstrosity of $v^{(6)}(\bm{\sigma} \cap \mathbb{T}_5)$ in Configuration $\mathrm{III}$ discourages us to finish the computation.

\subsection{Heptapeton first volumetric moment}
By the \emph{heptapeton}\index{heptapeton}, we mean $T_6$ ($6$-simplex\index{6-simplex}). By affine invariancy, we may consider
\begin{equation}
    \mathbb{T}_6 = \hull(\vect{0},\vect{e}_1,\vect{e}_2,\vect{e}_3,\vect{e}_4,\vect{e}_5,\vect{e}_6)
\end{equation}
with configurations and $\mathcal{C}(T_6)$ weights given by Table \ref{tab:Heptapent}.

\begin{table}[htb]
    \centering
\begin{tabular}{|c|c|c|c|}
\hline
 $C$ & $\mathrm{I}$ & $\mathrm{II}$ & $\mathrm{III}$\\
 \hline
 $S$ & $[0,0,0,0,0,0]$ & \begin{tabular}{c} $[0,0,0,0,0,0]$ \\ $[0,0,0,0,0,1]$ \end{tabular} & \begin{tabular}{c} $[0,0,0,0,0,0]$ \\ $[0,0,0,0,0,1]$ \\ $[0,0,0,0,1,0]$ \end{tabular}\\
 \hline 
 $w_C$ & $7$ & $21$ & $35$\\
 \hline
\end{tabular}
    \caption{Configurations $\mathcal{C}(\mathbb{T}_6)$ in a local representation with $\mathcal{C}(T_6)$ weights.}
    \label{tab:Heptapent}
\end{table}
By Theorem \ref{Thm:Canon} and for any $C \in \mathcal{C}(\mathbb{T}_6)$,
\begin{equation}\label{Eq:SecIntHepta}
v_6^{(k)}(\mathbb{T}_6)_C = \frac{120}{6^k}\int_{(\mathbb{R}^6\setminus \mathbb{T}_6^\circ)_C} v_5^{(k+1)}(\bm{\sigma}\cap \mathbb{T}_6) \,\zeta_6^{k+7}(\bm{\sigma}) \iota^{(k)}_6(\bm{\sigma}) \lambda_6(\dd \bm{\eta}),
\end{equation}
where
\begin{equation}
    \zeta_6(\bm{\sigma}) = \frac{\vol_5(\bm{\sigma}\cap \mathbb{T}_6)}{\|\bm{\eta}\|\vol_6 \mathbb{T}_6}, \qquad \iota^{(k)}_6(\bm{\sigma}) = \int_{\mathbb{T}_6} |\bm{\eta}^\top \vect{x}-1|^k \lambda_6(\dd\vect{x}).
\end{equation}
Configurations $\mathrm{I}$ and $\mathrm{II}$ are analogous to the first two configurations of $\mathbb{T}_3$, $\mathbb{T}_4$ and $\mathbb{T}_5$, we have $n_\mathrm{I} = 6$ and $n_\mathrm{II} = 2n_\mathrm{I}-2 = 10$ (truncated 5-simplex). The last configuration $\mathrm{III}$ is analogous to third configuration of $\mathbb{T}_5$. We have $n_\mathrm{III} = 12$. Thanks to this similarity, since we already know how to handle this configuration in the $\mathbb{T}_5$ case, we obtained contributions of all $\mathbb{T}_6$ configurations, see Table \ref{tab:HeptaAll}.
\begin{table}[H]
    \centering
\begin{tabular}{|c|c|}
\hline
 $C$ & $v_6^{(1)}(\mathbb{T}_6)_C$ \\
 \hline
 \ru{1.1}$\mathrm{I}$ & $\frac{45}{963780608}$ \\[0.6ex]
 \hline 
 \ru{1.1}$\mathrm{II}$ & $\frac{3826171}{4182119424000}-\frac{12560362004329 \pi ^2}{443562265371500795520}+\frac{6607326855286 \pi ^4}{85176183364279644451815}$ \\[0.6ex]
 \hline
 \ru{1.1}$\mathrm{III}$ & $\frac{71529389}{24395696640000}-\frac{4625576448278719 \pi ^2}{33267169902862559664000}+\frac{432402941059748 \pi ^4}{141960305607132740753025}$ \\[0.6ex]
 \hline
\end{tabular}
    \caption{Sections integrals in various configurations $\mathcal{C}(\mathbb{T}_6)$.}
    \label{tab:HeptaAll}
\end{table}
As a consequence, summing up the contributions from all configurations and by affine invariancy,
\begin{equation}
v_6^{(k)}(T_6) = \sum_{C \in \mathcal{C}(T_6)} w_C \, v_6^{(k)}(T_6)_C = 7 v_6^{(k)}(\mathbb{T}_6)_{\mathrm{I}} + 21v_6^{(k)}(\mathbb{T}_6)_{\mathrm{II}} + 35v_6^{(k)}(\mathbb{T}_6)_{\mathrm{III}},
\end{equation}
from which immediately
\begin{equation}
    v_6^{(1)}(T_6) = \tfrac{26609}{217818720}-\tfrac{3396146609 \pi ^2}{621871356506400}+\tfrac{1318349152898 \pi ^4}{12180206401298390455} \approx 0.00007880487647920397.
\end{equation}
We have not attempted to derive the higher moments. We leave this for our readers and humbly add that this task will be extraordinarily difficult.

\newpage
\section{Final remarks}
We have seen that the metric moments $v_n^{(k)}(P_d)$ having $n=d$ can be computed for all odd $k$ via our canonical section integral method whereas for $n>1$ and $d=3, k=1$ we could use Efron's formula. A natural question arises: How can we compute $v_n^{(k)}(P_d)$ for odd $k>1$ and $n>d$? Or when $d\geq 4$? Another obvious question is to deduce the volumetric moments $v_d^{(k)}(T_d)$ for $d\geq 6$. When $d=7$, there are four section equivalent configurations $C=\mathrm{I},\mathrm{II},\mathrm{III},\mathrm{IV}$ in $\mathcal{C}(T_7)$. Evaluating the section integral $v_7^{(1)}(T_7)_\mathrm{IV}$ for the fourth configuration is beyond the capabilities of our computer. At least, since $\bm{\sigma} \cap T_d$ is always a $T_{d-1}$ simplex in the first configuration of $T_d$, that is $n_\mathrm{I} = d$ with $w_\mathrm{I} = d+1$. By Theorem \ref{Thm:Canon},
\begin{equation}
 v_d^{(k)}(T_d)_\mathrm{I} = v_{d-1}^{(k+1)}(T_{d-1}) \frac{(d-1)!}{ d^k} \int_{\mathbb{R}^d\setminus K_d^\circ} \zeta_d^{k+d+1}(\bm{\sigma}) \iota^{(k)}_d(\bm{\sigma}) \lambda_d(\dd \bm{\eta})
\end{equation}
since $v_{d-1}^{(k+1)}(T_{d-1})$ are constants. More specifically, for $k=1$ by using Reed's formula, we found the following surprising relation
\begin{equation}
 v_d^{(1)}(T_d)_\mathrm{I} = 2v_d^{(2)}(T_d) = \frac{2(d!)}{(d+1)^d(d+2)^d}.
\end{equation}
\end{remark}
Based on the result we have seen so far for $d$-simplices, we conjecture
\begin{equation}
v_{r+1}^{(k)}(T_{r+1})=\sum_{s=0}^{\lfloor r/2 \rfloor} p^{(k)}_{rs} \pi^{2s}
\end{equation}
for some rationals $p^{(k)}_{rs}$ and $r=0,1,2,3,\ldots$ Since $\mathcal{G}(T_d) = \mathcal{S}_{d+1}$ (any permutation of vertices is a valid symmetry), we have for the weights $o_C = \binom{d+1}{|C|}$.

\begin{acknowledgment}{Acknowledgments.}
The work presented here spans five years, from the end of 2020 to mid-2024, corresponding to my PhD programme, which I pursued in Prague under the supervision of Jan Rataj. I owe a great deal to his patience and invaluable guidance throughout this period. I deeply appreciate the freedom I was granted, which allowed me to focus fully on my research. During this time, there were several key achievements that marked my progress. I would like to thank Anna Gusakova and Zakhar Kabluchko for the fruitful discussions we had during my stay in Münster, which greatly contributed to the development of my work.
\end{acknowledgment}

\newpage

\printbibliography[heading=bibintoc]

\clearpage

\appendix
\section{Selected genealogies}\label{Apx:Gen}

\vspace{-0.5em}
Configurations $\mathcal{C}(P)$ derived from the empty configuration\index{configuration!empty} $\mathrm{N}$ (no points selected) by succesively adding an extra vertex ($\mathrm{I},\mathrm{II},\mathrm{III},$ etc.). Genealogic decomposition is used to decompose affine functionals $F(K)$ as $\sum_{C \in \mathcal{C}(P)} w_C F(K)_C$. Each configuration is characterised by selection $S$ of vertices (figures), by section equivalent weights $w_C$ and the number of vertices of $\bm{\sigma} \cap P$, which is the order $n_C$.

\begin{minipage}[b]{0.45\textwidth}
\begin{figure}[H]
    \centering     \includegraphics[width=0.5\textwidth]{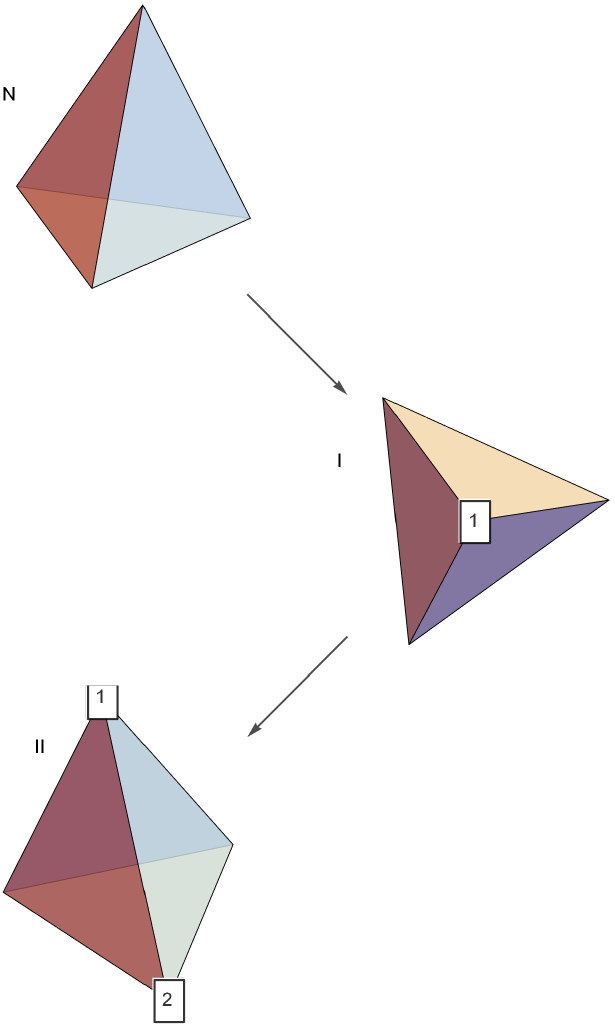}
    \vspace{-4em}
    \\
    \hspace{6em}\begin{tabular}{|c|c|c|}
    \hline
        $C$ & I & II \\
    \hline
        $w_C$ & 4 & 6\\
        $n_C$ & 3 & 4\\
    \hline
    \end{tabular}
    \caption{Tetrahedron genealogy}
    \label{fig:TETRAHEDRON_GENEALOGY}
\end{figure}
\end{minipage}
\begin{minipage}[b]{0.60\textwidth}
\begin{figure}[H]
    \centering     
    \vspace{-0.5em}\hspace{4em}\includegraphics[width=0.3\textwidth]{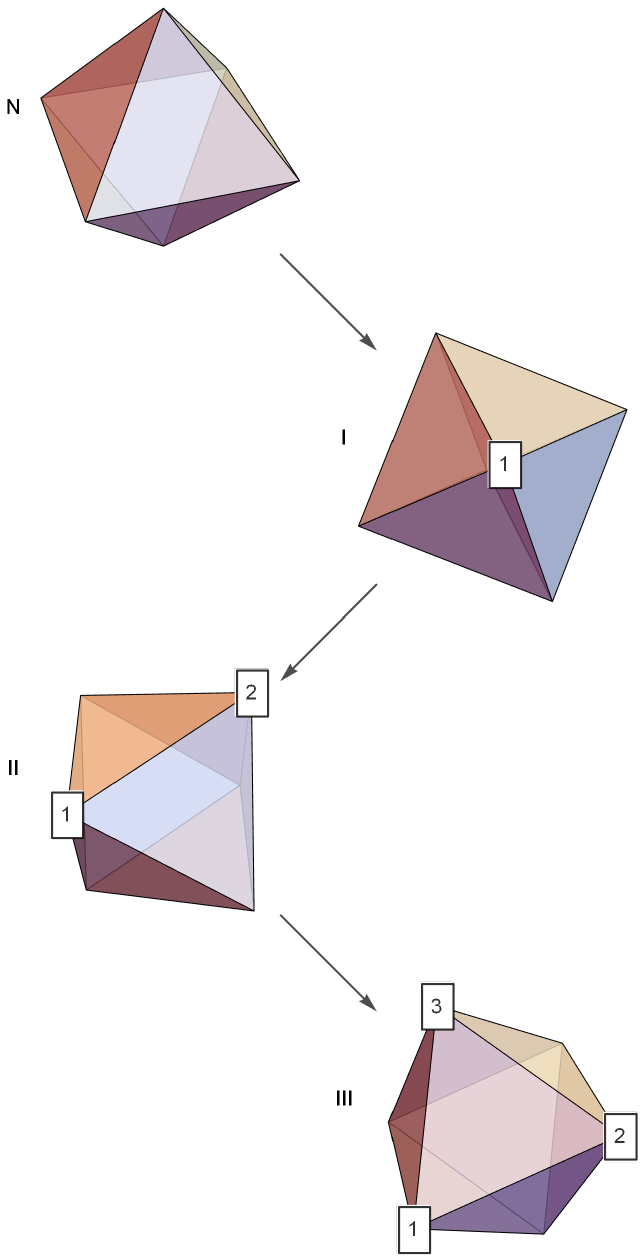}
    \vspace{-3em}
    \\
    \hspace{-5em}\begin{tabular}{|c|c|c|c|}
    \hline
        $C$ & I & II & III \\
    \hline
        $w_C$ & 6 & 12 & 4\\
        $n_C$ & 4 & 6 & 6\\
    \hline
    \end{tabular}    \caption{Octahedron genealogy}
    \label{fig:OCTAHE_GENEALOGY}
\end{figure}
\end{minipage}

\begin{minipage}[b]{0.45\textwidth}
\begin{figure}[H]
    \centering     \includegraphics[width=0.6\textwidth]{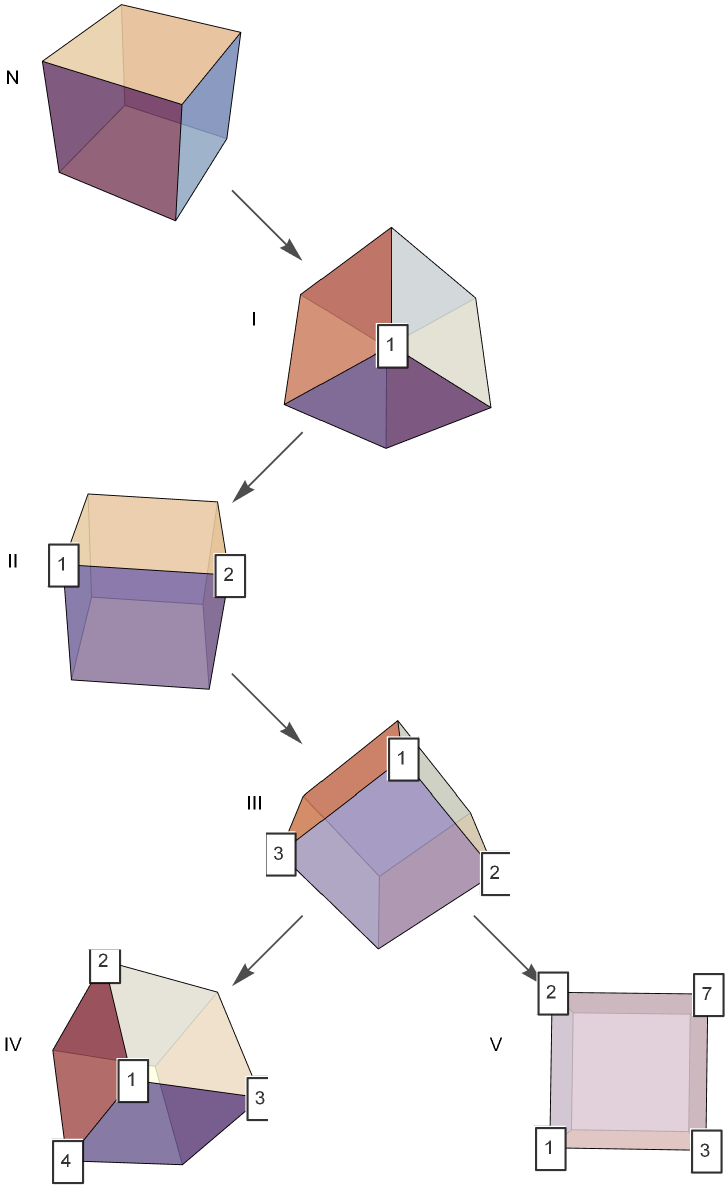}
    \vspace{1em}
    \\
    \begin{tabular}{|c|c|c|c|c|c|}
    \hline
        $C$ & I & II & III & IV & V \\
    \hline
        $w_C$ & 8 & 12 & 24 & 4 & 3 \\
        $n_C$ & 3 & 4 & 5 & 6 & 4 \\
    \hline
    \end{tabular}
    \caption{Cube genealogy}
    \label{fig:CUBE_GENEALOGY}
\end{figure}
\end{minipage}
\begin{minipage}[b]{0.55\textwidth}
\begin{figure}[H]
    \centering     \includegraphics[width=0.65\textwidth]{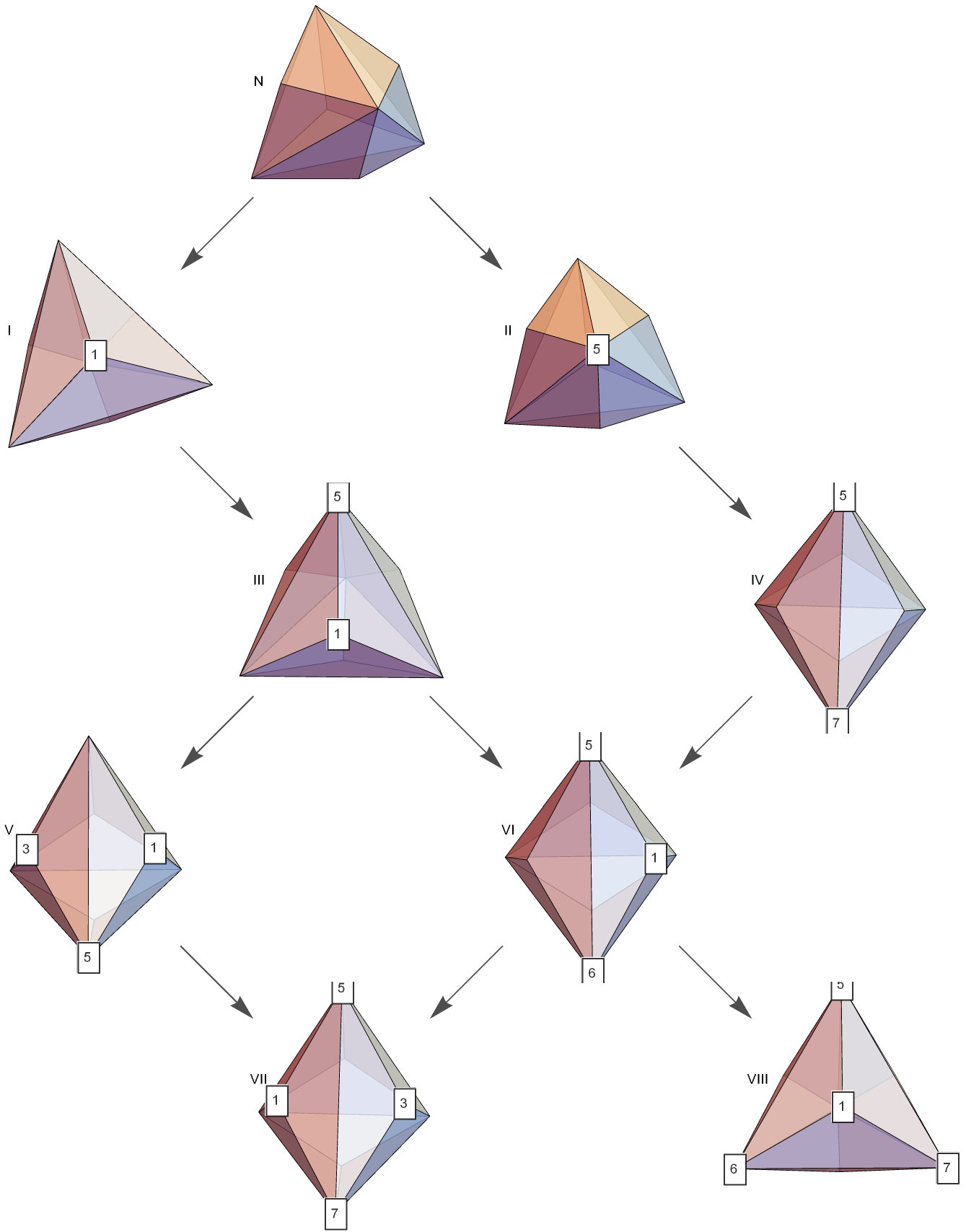}
    \vspace{0.7em}
    \\
    \begin{tabular}{|c|c|c|c|c|c|c|c|c|}
    \hline
        $C$ & I & II & III & \!IV\! & \!V\! & VI & \!VII\! & \!VIII\!\\
    \hline
        $ w_C$ & 4 & 4 & 12 & \!6\! & \!12\! & 12 & \!3\! & \!4\!\\
        $ n_C$ & 3 & 6 & 7 & \!10\! & \!8\! & 9 & \!8\! & \!9\!\\
    \hline
\end{tabular}
    \caption{Triakis tetrahedron genealogy}
    \label{fig:TRIAKIS_GENEALOGY}
\end{figure}
\end{minipage}

\begin{minipage}[b]{0.42\textwidth}
\vspace{-4em}
\begin{figure}[H]
    \centering     \includegraphics[width=0.92\textwidth]{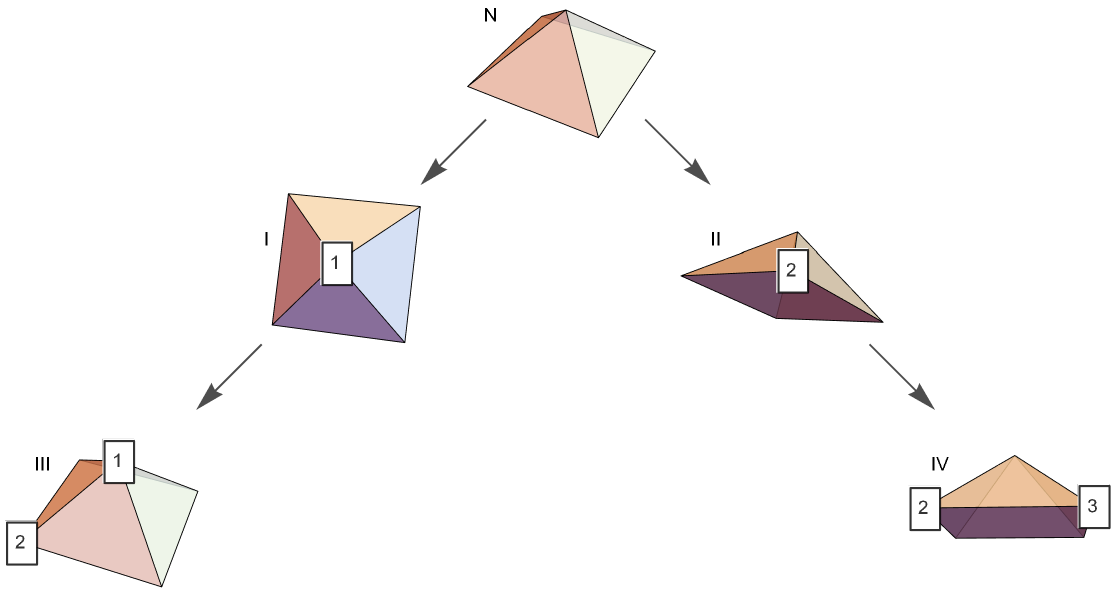}
    \\
    \hspace{6em}\begin{tabular}{|c|c|c|c|c|}
    \hline
        $C$ & I & II & III & IV\\
    \hline
        $w_C$ & 1 & 4 & 4 & 4\\
        $n_C$ & 4 & 3 & 5 & 4\\
    \hline
    \end{tabular}
    \caption{Square pyramid genealogy}
    \label{fig:PYRAMID_GENEALOGY}
\end{figure}
\vspace{-3em}
\begin{figure}[H]
    \centering     \includegraphics[width=0.92\textwidth]{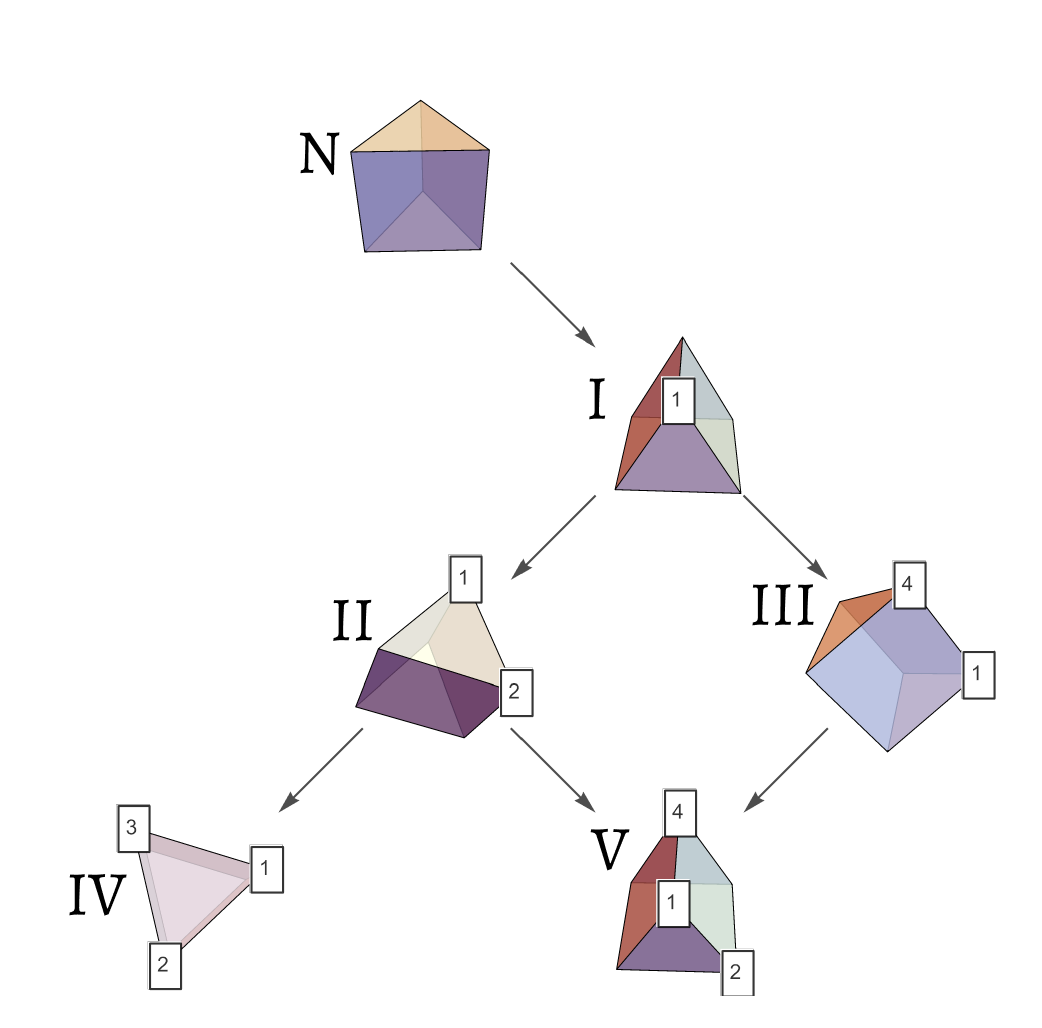}
    \vspace{0em}
    \\
    \begin{tabular}{|c|c|c|c|c|c|}
    \hline
        $C$ & I & II & III & IV & V\\
    \hline
        $ w_C$ & 6 & 6 & 3 & 1 & 6\\
        $ n_C$ & 3 & 4 & 4 & 3 & 5\\
    \hline
\end{tabular}
    \caption{Triangular prism genealogy}
    \label{fig:TRIPRISM_GENEALOGY}
\end{figure}
\vspace{-2em}
\begin{figure}[H]
    \centering     
    \includegraphics[width=0.92\textwidth]{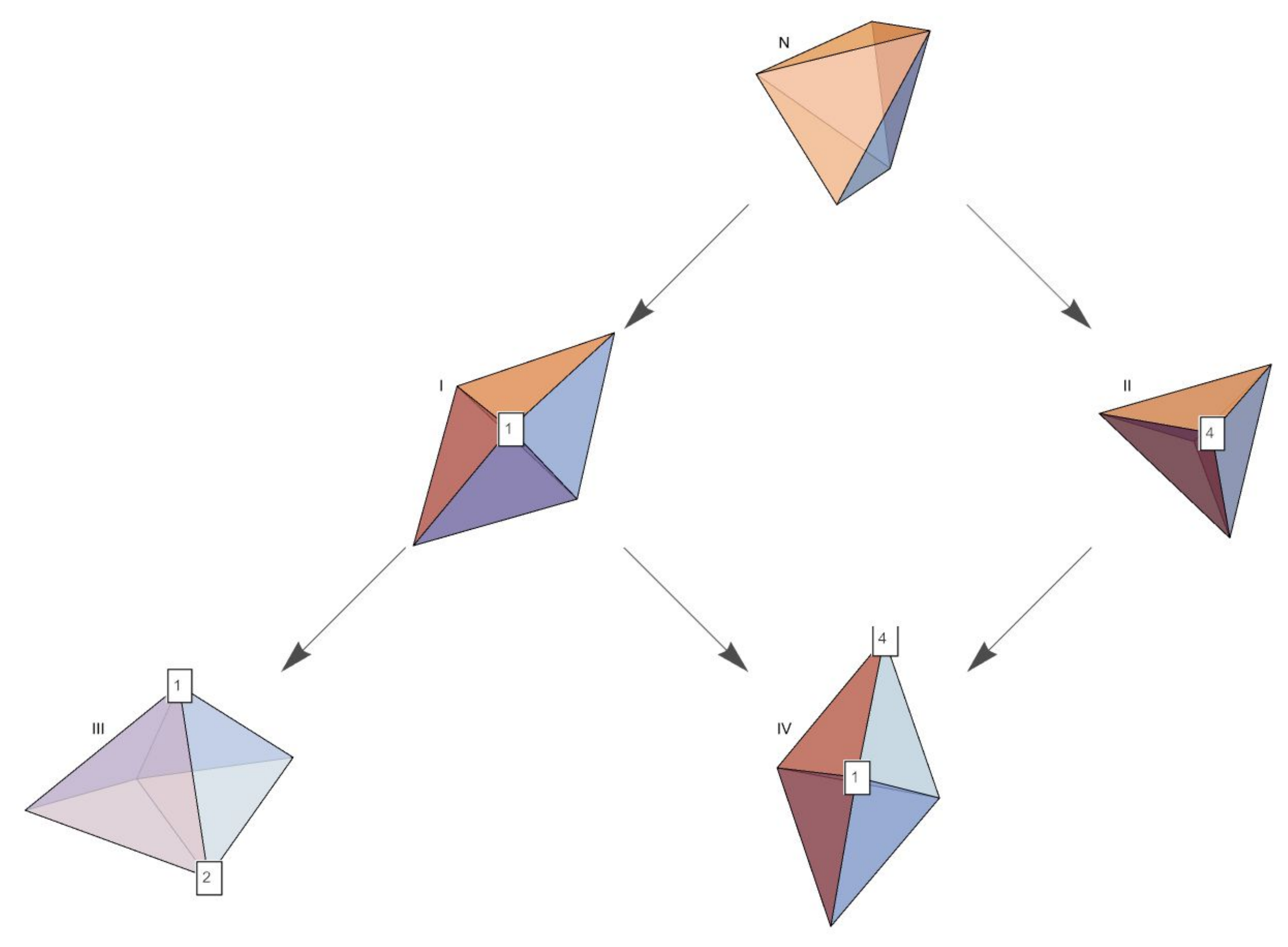}
    \vspace{0em}
    \\
    \begin{tabular}{|c|c|c|c|c|}
    \hline
        $C$ & I & II & III & IV\\
    \hline
        $w_C$ & 3 & 2 & 3 & 6\\
        $n_C$ & 4 & 3 & 6 & 5\\
    \hline
    \end{tabular}
    \caption{Triangular bipyramid genealogy}
    \label{fig:BIPYRAMID_GENEALOGY}
\end{figure}
\end{minipage}
\hfill
\begin{minipage}[b]{0.60\textwidth}
\vspace{-10em}
\begin{figure}[H]
    \centering
    \vspace{-1em}
    \includegraphics[width=0.60\textwidth]{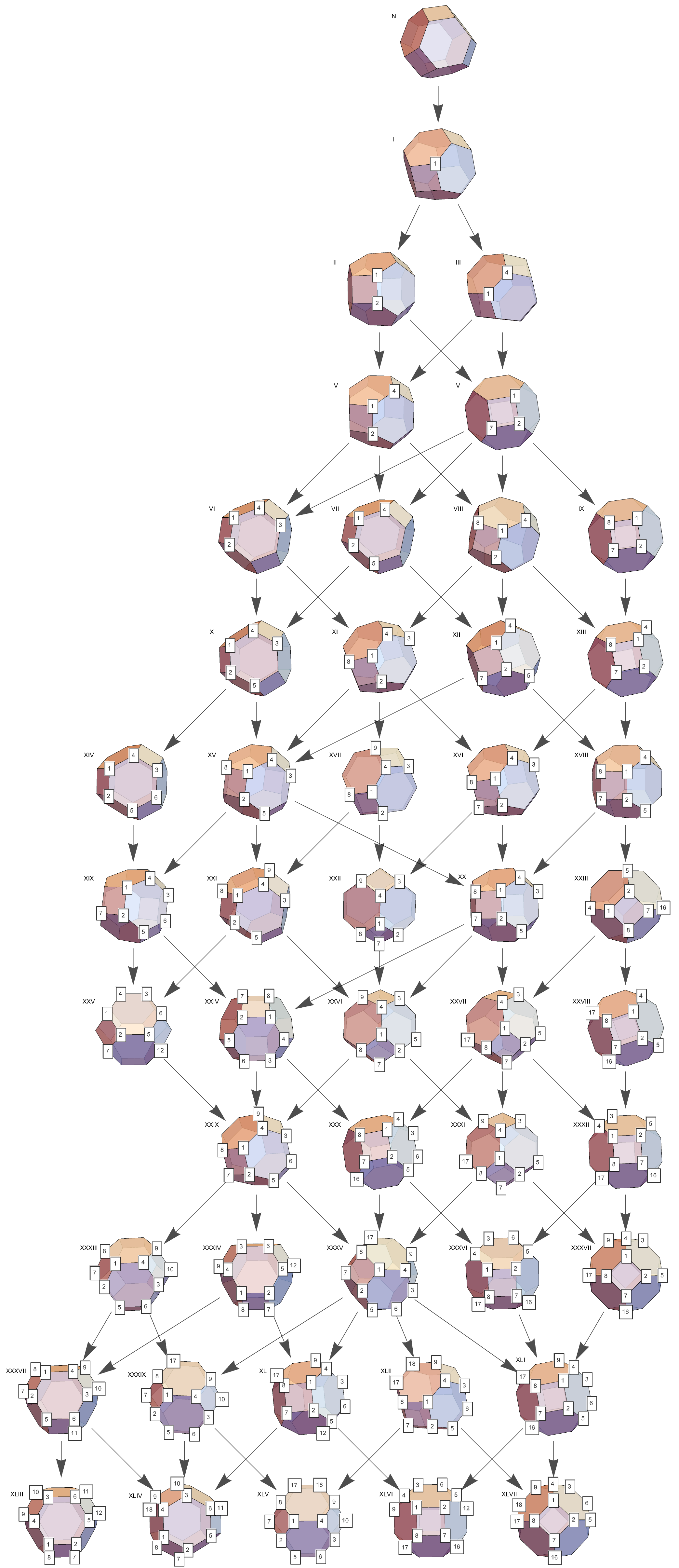}
    \vspace{0.7em}
    \\
    \begin{tabular}{|
      p{0.7em}|
      >{\centering}p{1.1em}|
      >{\centering}p{1.3em}|
      >{\centering}p{1.2em}|
      >{\centering}p{1.3em}|
      >{\centering}p{1.4em}|
      >{\centering}p{1.6em}|
      >{\centering}p{1.3em}|
      >{\centering}p{1.3em}|
      >{\centering}p{1.3em}|
      >{\centering\arraybackslash}p{1.1em}
      |}
    \hline
        \tiny $C$ & \!\!\tiny I\!\! & \!\!\tiny II\!\! & \!\!\tiny III\!\! & \!\!\tiny IV\!\! & \!\!\tiny V\!\! & \!\!\tiny VI\!\! & \!\!\tiny VII\!\! & \!\!\tiny VIII\!\! &  \!\!\tiny IX\!\! & \!\!\tiny X\!\!\\
    \hline
        \!\small $w_C$ & 24 & 24 & 12 & 48 & 24 & 24 & 24 & 24 & 6 & 48\\[-0.5ex]
        \!\small $n_C$ & 3 & 4 & 4 & 5 & 5 & 6 & 6 & 6 & 4 & 7\\
    \hline
        \tiny $C$ & \!\!\tiny XI\!\! & \!\!\tiny XII\!\! & \!\!\tiny XIII\!\! & \!\!\tiny XIV\!\! & \!\!\tiny XV\!\! & \!\!\tiny XVI\!\! & \!\!\tiny XVII\!\! & \!\!\tiny XVIII\!\! & \!\!\tiny XIX\!\! & \!\!\tiny XX\!\!\\
    \hline
        \!\small $w_C$ & 48 & 48 & 24 & 8 & 48 & 48 & 12 & 24 & 48 & 48\\[-0.5ex]
        \!\small $n_C$ & 7 & 7 & 5 & 6 & 8 & 6 & 8 & 6 & 7 & 7 \\
    \hline
        \tiny $C$ & \!\!\tiny XXI\!\! & \!\!\tiny XXII\!\! & \!\!\tiny XXIII\!\! & \!\!\tiny XXIV\!\! & \!\!\tiny XXV\!\! & \!\!\tiny XXVI\!\! & \!\!\tiny XXVII\!\! & \!\!\tiny XXVIII\!\! & \!\!\tiny XXIX\!\! & \!\!\tiny XXX\!\!\\
    \hline
        \!\small $w_C$ & 48 & 24 & 24 & 24 & 24 & 48 & 48 & 6 & 48 & 48\\[-0.5ex]
        \!\small $n_C$ & 9 & 7 & 7 & 6 & 8 & 8 & 8 & 8 & 7 & 7\\
    \hline
        \tiny $C$ &
   \!\!\tiny XXXI\!\! & \!\!\tiny XXXII\!\! & \!\!\!\tiny XXXIII\!\! & \!\!\tiny XXXIV\!\! & \!\!\tiny XXXV\!\! & \!\!\tiny XXXVI\!\! & \!\!\!\tiny XXXVII\!\! & \!\!\!\tiny XXXVIII\!\! & \!\!\tiny XXXIX\!\! & \!\!\tiny XL\!\!\\
    \hline
        \!\small $w_C$ & 24 & 48 & 24 & 24 & 48 & 24 & 24 & 48 & 48 & 48\\[-0.5ex]
        \!\small $n_C$ & 9 & 9 & 6 & 8 & 8 & 8 & 10 & 7 & 7 & 9\\
    \hline
        \tiny $C$ & \!\!\tiny XLI\!\! & \!\!\tiny XLII\!\! & \!\!\tiny XLIII\!\! &
   \!\!\tiny XLIV\!\! & \!\!\tiny XLV\!\! & \!\!\tiny XLVI\!\! & \!\!\tiny XLVII\!\! & & & \\
    \hline
        \!\small $w_C$ & 48 & 24 & 4 & 24 & 6 & 12 & 12 & & & \\[-0.5ex]
        \!\small $n_C$ & 9 & 7 & 6 & 8 & 6 & 10 & 8 & & & \\
    \hline
    \end{tabular}
    \caption{Truncated octahedron genealogy}
    \label{fig:OCTRUN_GENEALOGY}
\end{figure}
\end{minipage}

\newpage
\begin{minipage}[b]{0.55\textwidth}
\vspace{-4em}
\begin{figure}[H]
    \centering     \includegraphics[width=0.55\textwidth]{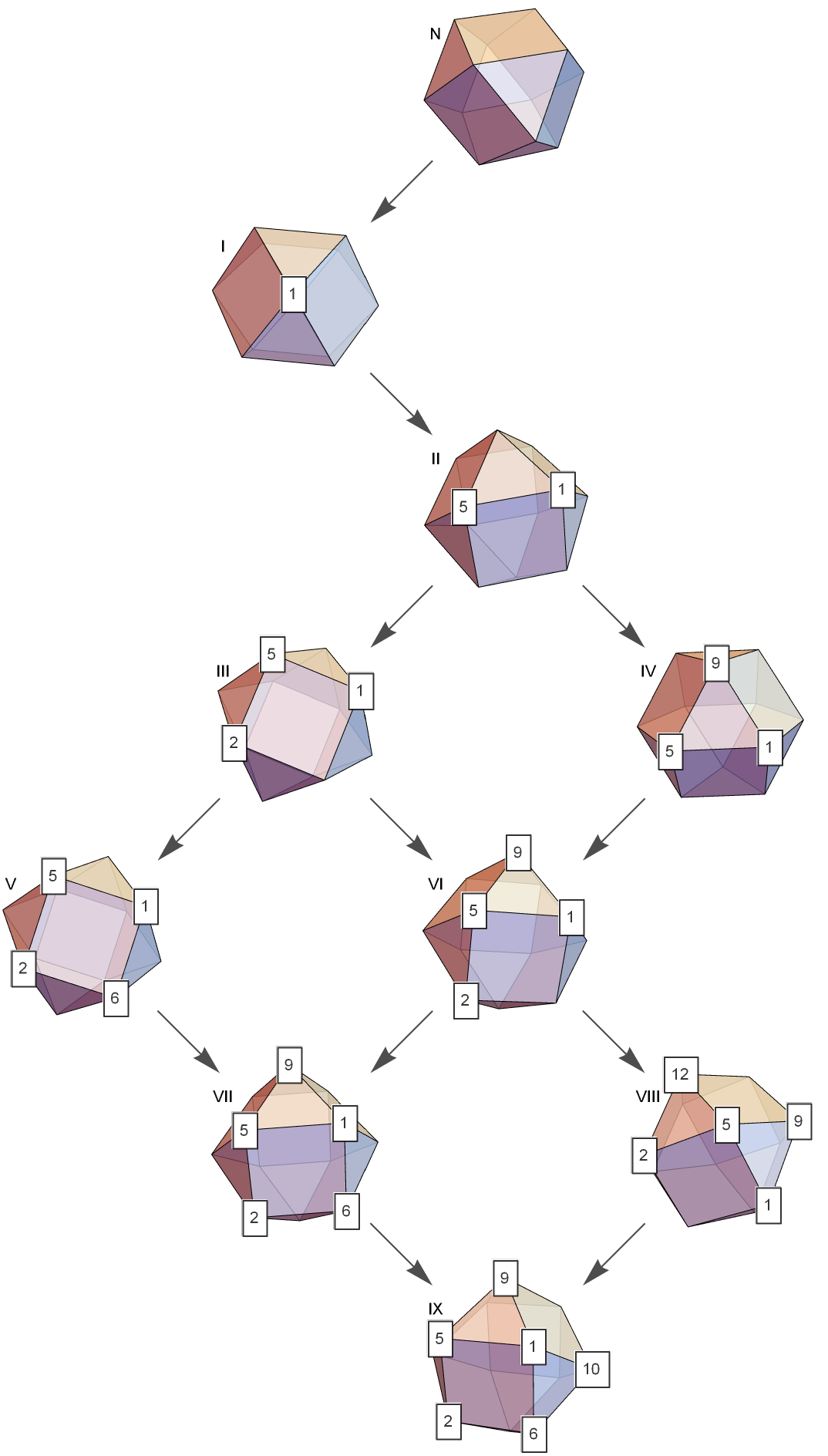}
    \vspace{0.7em}
    \\
    \begin{tabular}{|
      p{0.7em}|
      >{\centering}p{0.7em}|
      >{\centering}p{0.6em}|
      >{\centering}p{0.7em}|
      >{\centering}p{0.6em}|
      >{\centering}p{0.4em}|
      >{\centering}p{0.7em}|
      >{\centering}p{0.7em}|
      >{\centering}p{1.1em}|
      >{\centering\arraybackslash}p{0.7em}
      |}
    \hline
        \small $C$ & \!\small I & \!\small II & \!\small III & \!\small IV & \!\small V & \!\small VI & \!\!\small VII & \!\!\small VIII & \!\small IX\\
    \hline
        $\! w_C$ & \!12 & \!24 & \!24 & 8 & 6 & \!48 & \!24 & 12 & \!12\\
        $\! n_C$ & \!4 & \!6 & \!8 & 6 & 8 & \!8 & \!8 & 8 & \!8\\
    \hline
\end{tabular}
    \caption{Cuboctahedron genealogy}
    \label{fig:CUBOCTAHEDRON_GENEALOGY}
\end{figure}
\vspace{-2em}
\begin{figure}[H]
    \centering     \includegraphics[width=0.75\textwidth]{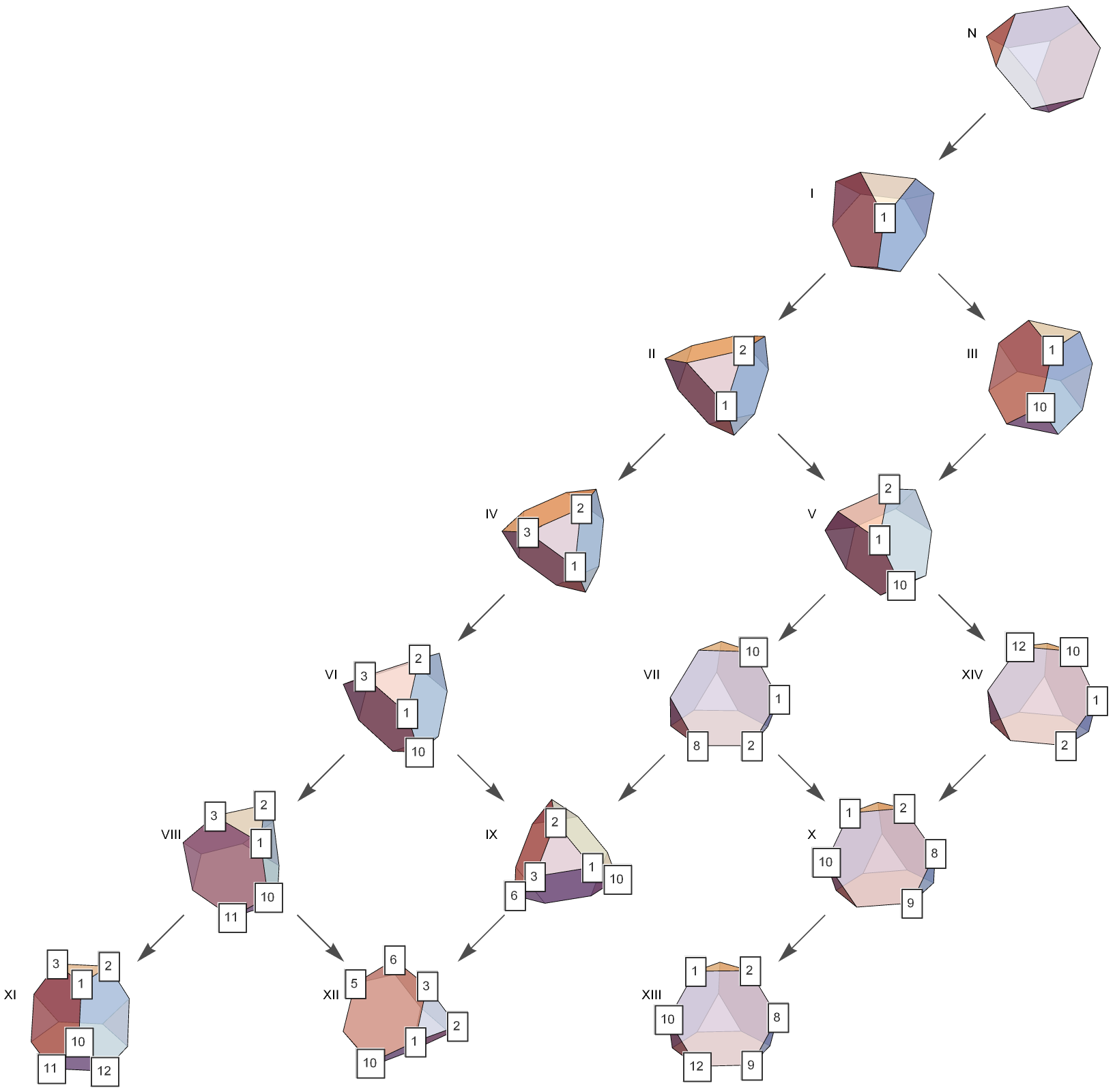}
    \vspace{0.7em}
    \\
    \begin{tabular}{|c|c|c|c|c|c|c|c|}
    \hline
        $C$ & I & II & III & IV & V & VI & VII\\
    \hline
        $ w_C$ & 12 & 12 & 6 & 4 & 24 & 12 & 12\\
        $ n_C$ & 3 & 4 & 4 & 3 & 5 & 4 & 6\\
    \hline
    \hline
        $C$ & I & II & III & IV & V & VI & VII\\
    \hline
        $ w_C$ & 24 & 12 & 24 & 3 & 12 & 4 & 12 \\
        $ n_C$ & 5 & 5 & 7 & 4 & 6 & 6 & 6\\
    \hline\end{tabular}
    \caption{Tuncated tetrahedron genealogy}
    \label{fig:TETRUN_GENEALOGY}
\end{figure}

\end{minipage}
\hspace{3em}
\begin{minipage}[b]{0.35\textwidth}

\begin{figure}[H]
   \hspace{-2.3em}\includegraphics[width=1.25\textwidth]{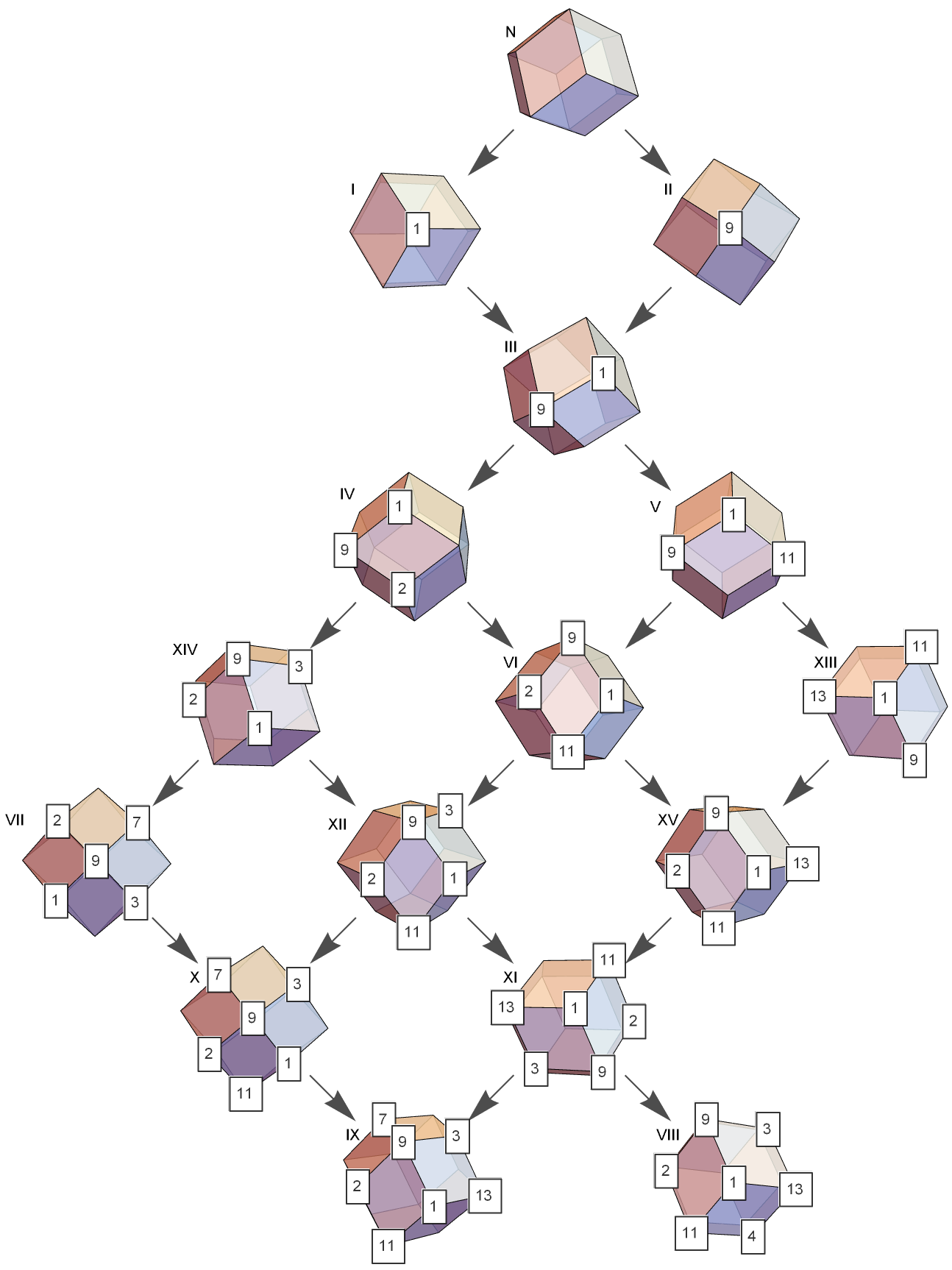}
    \vspace{0em}
    \\
    \begin{tabular}{|c|c|c|c|c|c|}
    \hline
        $C$ & I & II & III & IV & V \\
    \hline
        $w_C$ & 8 & 6 & 24 & 24 & 24 \\
        $n_C$ & 3 & 4 & 5 & 6 & 7 \\
    \hline
    \hline
        $C$ & V & VII & VIII & IX & X \\
    \hline
        $w_C$ & 12 & 6 & 4 & 12 & 24 \\
        $n_C$ & 6 & 8 & 6 & 8 & 8 \\
    \hline
    \hline
        $C$ & XI & XII & XIII & XIV & XV \\
    \hline
        $w_C$ & 24 & 48 & 8 & 24 & 24 \\
        $n_C$ & 7 & 7 & 9 & 7 & 8 \\
    \hline
    \end{tabular}
    \caption{Rhombic dodecahedron genealogy}
    \label{fig:RHOMBIC_GENEALOGY}
\end{figure}

\end{minipage}

\FloatBarrier
\section{Codes in Mathematica}
\setcounter{table}{0}

\subsection{General formulae}
\begin{lstlisting}[language=Mathematica,caption={Code to evaluate $e_d^{(k)}$ for general $d$ and $k$},label=code:efun]
efun[d_,1]:= (d+1)*(a @@ ConstantArray[0,d])^(d+1);
efun[d_,k_] := Simplify[(d+1)!/(d!)^k
  Sum[Times @@ Array[Signature[p[#+1]] &, k-1]*
        (Times @@ (a @@ Table[Count[#,i], {i,1,d}] &
            /@ Table[Flatten[{i-1, Table[p[j][[i]]-1, {j,2,k}]}], {i,1,d+1}])), ##] &
    @@ Table[{p[i], Permutations[Range[d+1]]}, {i,2,k}]];
\end{lstlisting}

\subsection{Tetrahedron area moments}
\begin{lstlisting}[language=Mathematica,caption={Code to evaluate $\iota_3^{(k)}(\bm{\sigma})$ in $\mathbb{T}_3$, configuration I},label=code:iota]
k = 1 (*desirable integer*);
Tcanon = Tetrahedron[{{0,0,0},{1,0,0},{0,1,0},{0,0,1}}];
Tabc = Tetrahedron[{{0,0,0},{1/a,0,0},{0,1/b,0},{0,0,1/c}}];
iotaint = Simplify[Integrate[(Dot[{a,b,c},x] - 1)^k, x \[Element] Tcanon] - (1 - (-1)^k) Integrate[(Dot[{a,b,c},x] - 1)^k, x \[Element] Tabc],Assumptions -> 1 < a && 1 < b && 1 < c]
\end{lstlisting}

\begin{lstlisting}[language=Mathematica,caption={Code to evaluate $\iota_3^{(k)}(\bm{\sigma})$ in $\mathbb{T}_3$, configuration II},label=code:iota2]
k = 1(*desirable integer*);
Tcanon = Tetrahedron[{{0,0,0},{1,0,0},{0,1,0},{0,0,1}}];
Tabc = Tetrahedron[{{0,0,0},{1/a,0,0},{0,1/b,0},{0,0,1/c}}];
Tstar = Tetrahedron[{{0,0,1},{(1 - c)/(a - c),0,(a - 1)/(a - c)},{0,(1 - c)/(b - c),(b - 1)/(b - c)},{0,0,1/c}}];
iotaint = Simplify[(Integrate[(Dot[{a,b,c},x] - 1)^k, x \[Element]
    Tcanon] - (1 - (-1)^k) Integrate[(Dot[{a,b,c},x] - 1)^k, 
    x \[Element] Tabc] + (1 - (-1)^k) Integrate[(Dot[{a,b,c},x]
    - 1)^k, x \[Element] Tstar]),
  Assumptions -> 1 < a && 1 < b && 0 < c < 1]
\end{lstlisting}

\begin{lstlisting}[language=Mathematica,caption={Code to evaluate $v_2^{(k+1)}(\mathbb{U}_2^{\alpha\beta})$, odd $k$},label=code:vII]
k = 1(*desirable odd integer*);
Delta = 1/2! Det[{x1 - x0, x2 - x0}];
trianab = Triangle[{{0, 0}, {\[Alpha], 0}, {0, \[Beta]}}];
trianunit = Triangle[{{0, 0}, {1, 0}, {0, 1}}];
meancut = 
 Simplify[(2/(1 - \[Alpha] \[Beta]))^(
   k + 4) (Integrate[Delta^(k + 1), x0 \[Element] trianunit, 
      x1 \[Element] trianunit, x2 \[Element] trianunit] - 
     3 Integrate[Delta^(k + 1), x0 \[Element] trianunit, 
       x1 \[Element] trianunit, x2 \[Element] trianab] + 
     3 Integrate[Delta^(k + 1), x0 \[Element] trianunit, 
       x1 \[Element] trianab, x2 \[Element] trianab] - 
     Integrate[Delta^(k + 1), x0 \[Element] trianab, 
      x1 \[Element] trianab, x2 \[Element] trianab]), 
  Assumptions -> 0 < \[Alpha] < 1 && 0 < \[Beta] < 1]
\end{lstlisting}

\subsection{Pentachoron 4-volume moments}
\begin{lstlisting}[language=Mathematica,caption={Code to evaluate $\iota_4^{(k)}(\bm{\sigma})$ in $\mathbb{T}_4$, configuration I},label=code:4diota]
k = 1 (*desirable integer*);
Tcanon = 
  Simplex[{{0, 0, 0, 0}, {1, 0, 0, 0}, {0, 1, 0, 0}, {0, 0, 1, 0}, {0, 0, 0, 1}}];
Tabcd = 
  Simplex[{{0, 0, 0, 0}, {1/a, 0, 0, 0}, {0, 1/b, 0, 0}, {0, 0, 1/c, 0}, {0, 0, 0, 1/d}}];
iotaint = 
 Simplify[
  Integrate[(Dot[{a, b, c, d}, x] - 1)^k, x \[Element] 
     Tcanon] - (1 - (-1)^k) Integrate[(Dot[{a, b, c, d}, x] - 1)^
     k, x \[Element] Tabcd], 
  Assumptions -> 1 < a && 1 < b && 1 < c && 1 < d]
\end{lstlisting}

\begin{lstlisting}[language=Mathematica,caption={Code to evaluate $\iota_4^{(k)}(\bm{\sigma})$ in $\mathbb{T}_4$, configuration II},label=code:4diota2]
k = 1(*desirable integer*);
Tcanon = Simplex[{{0, 0, 0, 0}, {1, 0, 0, 0},
    {0, 1, 0, 0}, {0, 0, 1, 0}, {0, 0, 0, 1}}];
Tabcd = Simplex[{{0, 0, 0, 0}, {1/a, 0, 0, 0},
    {0, 1/b, 0, 0}, {0, 0, 1/c, 0}, {0, 0, 0, 1/d}}];
Tstar = Simplex[{{0, 0, 0, 1},
    {(1 - d)/(a - d), 0, 0, (a - 1)/(a - d)},
    {0, (1 - d)/(b - d), 0, (b - 1)/(b - d)},
    {0, 0, (1 - d)/(c - d), (c - 1)/(c - d)},
    {0, 0, 0, 1/d}}];
iotaint = Simplify[(Integrate[(Dot[{a, b, c, d}, x] - 1)^k,
    x \[Element] Tcanon] - 2 Integrate[(Dot[{a,b,c,d},x] - 1)^k,
    x \[Element] Tabcd] + 2 Integrate[(Dot[{a,b,c,d},x] - 1)^k, x
    \[Element] Tstar]),
   Assumptions -> 1 < a && 1 < b && 1 < c && 0 < d < 1]
\end{lstlisting}

\begin{lstlisting}[language=Mathematica,caption={Code to evaluate $v_3^{(k+1)}(\mathbb{U}_3^{\alpha\beta\gamma})$, odd $k$},label=code:4dvII]
k = 1(*desirable odd integer*);
Delta = 1/3! Det[{x1 - x0, x2 - x0, x3 - x0}];
Tabc = Tetrahedron[{{0, 0, 0}, {\[Alpha], 0, 0}, {0, \[Beta], 0}, {0, 0, \[Gamma]}}];
Tcan = Tetrahedron[{{0, 0, 0}, {1, 0, 0}, {0, 1, 0}, {0, 0, 1}}];
meancut = 
 Simplify[(6/(1 - \[Alpha] \[Beta] \[Gamma]))^(
   k + 5) (Integrate[Delta^(k + 1), x0 \[Element] Tcan, 
    x1 \[Element] Tcan, x2 \[Element] Tcan,x3 \[Element] Tcan]
    - 4 Integrate[Delta^(k + 1), x0 \[Element] Tcan, 
    x1 \[Element] Tcan, x2 \[Element] Tcan, x3 \[Element] Tabc]
    + 6 Integrate[Delta^(k + 1), x0 \[Element] Tcan, 
    x1 \[Element] Tcan, x2 \[Element] Tabc, x3 \[Element] Tabc]
    - 4 Integrate[Delta^(k + 1), x0 \[Element] Tcan, 
    x1 \[Element] Tabc, x2 \[Element] Tabc, x3 \[Element] Tabc]
    + Integrate[Delta^(k + 1), x0 \[Element] Tabc, x1 \[Element]
    Tabc, x2 \[Element] Tabc, x3 \[Element] Tabc]), 
  Assumptions -> 
   0 < \[Alpha] < 1 && 0 < \[Beta] < 1 && 0 < \[Gamma] < 1]
\end{lstlisting}

\subsection{GECRA: Genealogy creation algorithm}
The following algorithm generates realisable configurations and their weights for any polytopes by exploiting their symmetries. The code works on iterating over \texttt{nos} (the number of selected vertices) and it has the following steps
\vspace{1em}
\begin{itemize}
    \item Step 0: initialize empty configuration $\mathrm{N}$
    \item CYCLE
    \begin{itemize}
        \item Step I: generate new configurations from old ones
        \item Step II: group them into classes, select first configuration from each (the so called representant)
        \item Step III: for each representant, determine if it is realisable, discard unrealisable
    \end{itemize}
    \item repeat step I until \texttt{nos} reaches half the number of vertices,
\end{itemize}
\vspace{1em}
The algorithm is initialised by inserting vertices of $P_d$ into \texttt{solid} as a list of their coordinates and the symmetry group $\mathcal{G}(P_d)$ into \texttt{symgroup} as list of permutations on indexes of these vertices. In the code, \texttt{dimen} is the dimension $d$. For example, Code \ref{code:GECRAinput} shows the input for $P_d = C_3$ (the three-dimensional unit cube). Note that we only store the generators of $\mathcal{G}(C_3)$ since the whole symmetric group can be obtain by successive composition of the elements with themselves.
\begin{lstlisting}[language=Mathematica,caption={Input for GECRA for $P_d = C_3$},label=code:GECRAinput]
solid = {{0, 0, 0}, {1, 0, 0}, {0, 1, 0}, {0, 0, 1},
    {0, 1, 1}, {1, 0, 1}, {1, 1, 0}, {1, 1, 1}};
generators = {(*reflection*){4, 6, 5, 1, 3, 2, 8, 7},
 (*2fold rotation*){3, 7, 5, 1, 4, 2, 8, 6},
 (*3fold rotation*){7, 8, 3, 2, 1, 6, 5, 4}};
symgroup = FixedPoint[Union[Flatten[Table[PermutationProduct[#, p] & /@ #, {p, #}], 1]] &, generators];
\end{lstlisting}
The output of the GECRA program is the following
\begin{itemize}
    \item \texttt{alltypes}:the list of cofigurations, each configuration is represented by a list of indices of vertices
    \item \texttt{allweights}: list of weights of configurations
    \item \texttt{allgenealogy}: the genealogy as a list of pairs $i \to j$, where $i,j$ are indices of configurations in the list of configurations
    \item \texttt{gengraph}: the genealogy graph (a Hasse diagram)
\end{itemize}

\begin{lstlisting}[language=Mathematica,caption={GECRA: Genealogies from symmetry groups},label=code:GECRA]
Clear[classreps, rawclassreps, oineqsel, isrealisable, orbitmaker, 
  allsuccesors, weightsel, dimen];
dimen = 4;
ofvertices=Length[solid];
etaparams = Table[a[i], {i, dimen}];
orbitmaker[sel_] := orbitmaker[sel] = 
   Union[Table[(sel)[[#[[i]]]], {i, ofvertices}] & /@ symgroup];
weightsel[sel_] := 
  If[Total[sel] < ofvertices/2, Length[orbitmaker[sel]], 
   Length[orbitmaker[sel]]/2];
(*inequalities for etaparams a,b,c,d,... for a given 0,1 selection of \vertices*)
oineqsel[sel_] := oineqsel[sel] = 
   With[{representant = Pick[solid, # == 1 & /@ sel]}, 
    Reduce[Or[
      And @@ Flatten[{Dot[etaparams, #] > 1 & /@ representant, 
         Dot[etaparams, #] < 1 & /@ Complement[solid, representant]}],
      And @@ Flatten[{Dot[etaparams, #] < 1 & /@ representant, 
         Dot[etaparams, #] > 1 & /@ 
          Complement[solid, representant]}]]]];
isrealisable[sel_] := 
  isrealisable[sel] = If[Length[oineqsel[sel]] == 0, 0, 1];
(*step 0*)
classreps[0] = {ConstantArray[0, ofvertices]};
(*step I*)
allsuccesors[sel_] := 
  allsuccesors[sel] = ReplaceList[sel, {a___, 0, b___} :> {a, 1, b}];
(*step II*)
rawclassreps[i_] := 
  rawclassreps[i] = 
   Map[Last, 
    Union[orbitmaker[#] & /@ 
      Union[Flatten[allsuccesors[#] & /@ classreps[i - 1], 
        1]]]];
(*step III*)
classreps[nos_] := 
  classreps[nos] =(*sort by weight of a configuration*)
   SortBy[Select[rawclassreps[nos], isrealisable[#] == 1 &], weightsel];
    
(*OUTPUT*)
allreps = Flatten[Table[classreps[i], {i, 1, Floor[ofvertices/2]}], 1];
(*01 representants as their index*)
repstoindexesrule = 
  Flatten[{{ConstantArray[0, ofvertices] -> 0}, 
    Table[allreps[[i]] -> i, {i, Length[allreps]}]}, 1];
allgenealogy = 
  Flatten[Table[(sel /. repstoindexesrule) -> (suc /. 
       repstoindexesrule), {i, 0, Floor[ofvertices/2] - 1}, {sel, classreps[i]}, {suc, 
     Intersection[Flatten[(orbitmaker[#] & /@ allsuccesors[sel]), 1], classreps[i + 1]]}], 2];
gengraph = 
  GraphPlot[RomanNumeral[#[[1]]] -> RomanNumeral[#[[2]]] & /@ allgenealogy, VertexLabeling -> True, DirectedEdges -> True];
(*representants in coordinates*)allrepscoord =
  Pick[solid, # == 1 & /@ #] & /@ allreps;
alltypes = Pick[Range[ofvertices], # == 1 & /@ #] & /@ allreps;
allweights = weightsel[#] & /@ allreps;
allnoofsel = Total[#] & /@ allreps;
\end{lstlisting}

\end{document}